 \tikzset{
  on each segment/.style={
    decorate,
    decoration={
      show path construction,
      moveto code={},
      lineto code={
        \path [#1]
        (\tikzinputsegmentfirst) -- (\tikzinputsegmentlast);
      },
      curveto code={
        \path [#1] (\tikzinputsegmentfirst)
        .. controls
        (\tikzinputsegmentsupporta) and (\tikzinputsegmentsupportb)
        ..
        (\tikzinputsegmentlast);
      },
      closepath code={
        \path [#1]
        (\tikzinputsegmentfirst) -- (\tikzinputsegmentlast);
      },
    },
  },
  mid arrow/.style={postaction={decorate,decoration={
        markings,
        mark=at position 0.6 with {\arrow[#1]{stealth}} 
      }}},
}
\date{}
\renewcommand{\uppercasenonmath}[1]{}
\numberwithin{equation}{section} \theoremstyle{plain}
\newtheorem{theorem}{Theorem}[section]
\newtheorem{corollary}[theorem]{Corollary}
\newtheorem{lemma}[theorem]{Lemma}
\theoremstyle{definition}
\newtheorem{definition}[theorem]{Definition}
\newtheorem{example}[theorem]{Example}
\newtheorem{remark}[theorem]{Remark}
\newtheorem*{ack*}{ACKNOWLEDGEMENTS}
\newcommand{\oo}{\otimes}
\newcommand{\pf}{\noindent\begin {proof}}
\newcommand{\epf}{\end{proof}}
\newcommand{\ra}{\rightarrow}
\newcommand{\Hom}{\mbox{\rm Hom}}
\newcommand{\Tor}{\mbox{\rm Tor}}
\newcommand{\coker}{\mbox{\rm Coker}}
\newcommand{\ke}{\mbox{\rm Ker}}
	\newcommand{\ModL}{\mathrm{Mod}\text{-}\Lambda}
	\newcommand{\ModA}{\mathrm{Mod}\text{-}A}
	\newcommand{\ModB}{\mathrm{Mod}\text{-}B}
	\newcommand{\ModR}{\mathrm{Mod}\text{-}R}
	\newcommand{\sMod}[1]{\mathrm{mod}\text{-}{#1}}
	\newcommand{\ModAB}{\mathrm{Mod}\text{-}A\times B}
	\newcommand{\modAB}{\mathrm{mod}\text{-}A\times B}
	\newcommand{\Modg}{\mathrm{Mod}\text{-}\Gamma}
	\newcommand{\ModG}{\mathrm{Mod}\text{-}\Gamma\ltimes_{\theta}M}
	\newcommand{\modG}{\mathrm{mod}\text{-}\Gamma\ltimes_{\theta}M}
	\newcommand{\modR}{\mathrm{mod}\text{-}R}
	\newcommand{\modA}{\mathrm{mod}\text{-}A}
	\newcommand{\modB}{\mathrm{mod}\text{-}B}
	\newcommand{\modg}{\mathrm{mod}\text{-}\Gamma}
	\newcommand{\modL}{\mathrm{mod}\text{-}\Lambda}
	\def\add{\mathop{\rm add}\nolimits}
	\def\dim{\mathop{\rm ext.dim}\nolimits}
	\def\der{\mathop{\rm Rouq.dim}\nolimits}
	\def\IT{\mathop{\rm IT.dist}\nolimits}
\begin{document}
\begin{center}
{
{\bf\large
Three homological invariants under cleft extensions
\footnote{Yajun Ma was supported by the Gansu Province Education Science and Technology Innovation Project (Grant No. 2024A-040);
\\
{\color{white}\hspace{5mm}}
Junling Zheng was partly supported by
the National Natural Science Foundation of China (Grant No. 12001508);
\\
{\color{white}\hspace{5mm}}
Yu-Zhe Liu was supported by National Natural Science Foundation of China (Grant Nos. 12561008, 12401042), Guizhou Provincial Basic Research Program (Natural Science) (Grant No. ZD[2025]085, ZK[2024]YiBan066), and Scientic Research Foundation of Guizhou University (Grant Nos. [2022]53, [2022]65, [2023]16).
}}

\vspace{0.5cm}   Yajun Ma, Junling Zheng\footnote{{\color{white}\hspace{1mm}} Corresponding author.}, Yu-Zhe Liu}

\end{center}

$$\bf  Abstract$$
\leftskip0truemm \rightskip0truemm \noindent
In this paper, we investigate the behavior of Igusa-Todorov distances, extension and Rouquier dimensions under cleft extensions of abelian categories.
 We apply our results to Morita context rings, trivial extension rings, tensor rings and arrow removals.
\leftskip10truemm \rightskip10truemm \noindent
\\[2mm]
{\bf Keywords:} cleft extension; Igusa-Todorov distance; extension dimension; Rouquier dimension.\\
{\bf 2020 Mathematics Subject Classification:} 18G80, 18G25, 16E30.

\leftskip0truemm \rightskip0truemm
\section { \bf Introduction}
The dimension for a triangulated category was introduced by Rouquier (\cite{Rouquier})  following the idea of  Bondal and Van den Bergh' work in \cite{BV},
which measures how quickly such a category can be built from single object. The dimension of a triangulated category has played an important role in representation theory, and has been studied by many scholars.
For example, it can be used to compute the representation dimension of Artin algebras (\cite{Opp,Rouquier1}).
In recent years, there has been tremendous interest in investigating the bounds for the dimension of a triangulated category.
Oppermann and {\v{S}}{\v{t}}ov{\'{\i}}{\v{c}}ek proved that all proper thick subcategories of  the bounded derived category  of finitely generated modules over a Noetherian algebra containing  the perfect complexes have infinite dimension (\cite{OS});  Psaroudakis investigated bounds for the dimension of a triangulated category in a recollement situation by applying \cite[Lemma 3.5]{Rouquier} to the case of semiorthogonal decompositions (\cite{P});  Zheng and Huang gave some bounds for the dimension of a bounded derived category in term of projective dimensions of certain simple modules as well as radical layer length (\cite{Z,Zheng-huang2020,Zheng-huang2022}).

 Inspired by the dimension $\der \mathscr{T}$ of a triangulated category $\mathscr{T}$, the extension dimension $\dim \mathscr{A}$ of an abelian category $\mathscr{A}$ was introduced by Beligiannis in \cite{Be}.
This dimension measures how far an algebra is from being of finite representation type; specially, an Artin algebra $\Lambda$ is representation-finite if and only if the extension dimension of $\modL$ is zero by \cite{Be}.
Many authors have investigated the extension dimension under different situations.
For instance, Zhang and Zheng studied the extension dimension under derived and stable equivalences (\cite{ZZ}).
 It is worth noting that, by \cite{Zheng-huang2022}, we can see that the extension dimension and Rouquier dimension  are closely related to the $n$-Igusa-Todorov algebras introduced by Wei (\cite{W}).
It is well known that Igusa-Todorov algebras satisfy the finitistic dimension conjecture (see \cite[Theorem 1.1]{W}) and are invariant under derived equivalences (see \cite{Wei2018,Wu-Wei2022,Z1}).
Although Igusa-Todorov algebras encompass many classes of Artin algebras (see \cite{W,xi2004finitistic,xi2006finitistic,xi2008finitistic}), unfortunately not all Artin algebras are Igusa-Todorov algebras (\cite{Conde,Z1}).
In order to measure how far an algebra is from being an Igusa-Todorov algebra, Zheng introduced the concept of Igusa-Todorov distance of Artin algebras (\cite{Z1}),
 and studied its behavior under singular equivalences and recollements of derived module categories (\cite{ZZ1}).

A cleft extension of an abelian category $\mathscr{B}$, introduced by Beligiannis (\cite{Be1}), is an abelian category $\mathscr{A}$ together with functors
$$\scalebox{0.85}{\xymatrixcolsep{3pc}\xymatrix{
		\mathscr{B}\ar[rr]^{\mathsf{i}} && \mathscr{A}\ar[rr]^{\sf e} && \mathscr{B} \ar@/_2.0pc/[ll]_{\sf l} } }$$
such that the functor $\sf e$ is faithful and exact, and admits a left adjoint $\sf l$, and there is a natural isomorphism $\mathsf{el}\simeq
\mathrm{Id}_{\mathscr{B}}.$
We denote this cleft extension by $(\mathscr{B},\mathscr{A},\mathsf{i},\mathsf{e},\mathsf{l})$,
which  gives rise to an endofunctor $\mathsf{F}:\mathscr{B}\ra \mathscr{B}$  ( see Section \ref{Section2.2}); we emphasize that the  endofunctor is of importance, as imposing homological conditions on it imply properties for the rest of the functors.
It should be noted that there is an abundance of constructions that are of great interest in representation theory and occur as a cleft extension of  abelian categories.
For instance, Morita context rings, trivial extensions, tensor rings and $\theta$-extensions \cite{Kostas,Ma}.
Hence cleft extensions of abelian categories provide the proper conceptual framework for the study of many problems in several different contexts.
For example, Green, Psaroudakis and Solberg investigated the behavior of the finitistic dimension in a cleft extension under certain condition (\cite{GP1}); Kostas studied Gorensteinness, Gorenstein projective modules, and singularity categories in the cleft extensions of rings (\cite{Kostas}).

Building on this, the main purpose of this paper is to investigate the behavior of Igusa-Todorov distances, extension dimensions, and dimensions of bounded derived categories under cleft extensions of abelian categories.

To this end, we first introduce the Igusa-Todorov distance, denoted by $\IT (\mathscr{A})$, for an abelian category $\mathscr{A}$. This is a generalization of the Igusa-Todorov distance of an Artin algebra introduced by Zheng (\cite{Z1}); see Definition \ref{def:IT distance}. We then present the following theorem, which is  listed as Theorem \ref{thm:IT}.

\begin{theorem}\label{thm1}
	Let $(\mathscr{B},\mathscr{A},\mathsf{i},\mathsf{e},\mathsf{l})$ be a cleft extension of abelian categories with enough projectives such that $\mathsf{l}$ is exact and $\mathsf{e}$ preserves projectives. If the induced endofunctor $\mathsf{F}$ is nilpotent and $\mathsf{Proj}\mathscr{B}=\add (P)$ for some projective object $P$ in $\mathscr{B}$, then
	$$\IT (\mathscr{B})\leq \IT (\mathscr{A})\leq n(\IT (\mathscr{B})+1)-1,$$
	where $n$ is a positive integer such that $\mathsf{F}^{n}=0$.
\end{theorem}

 There is a dual notion to cleft extensions, which is called cleft coextensions (see Definition \ref{def:cleft coextension}). Let $(\mathscr{B},\mathscr{A},\mathsf{i},\mathsf{e},\mathsf{r})$ be a cleft coextension of abelian categories. This gives rise to an endofunctor $\mathsf{F}':\mathscr{B}\ra \mathscr{B}$ ( see Section \ref{Section 2.3}).
 From \cite[Remark 2.5]{GP1}, we know that the cleft coextension $(\mathscr{B},\mathscr{A},\mathsf{i},\mathsf{e},\mathsf{r})$ is the lower part of a cleft extension $(\mathscr{B},\mathscr{A},\mathsf{i},\mathsf{e},\mathsf{l})$
 if and only if $\mathsf{F}'$ admits a left adjoint $\mathsf{F}$, where $\mathsf{F}$ is the induced endofunctor associated to $(\mathscr{B},\mathscr{A},\mathsf{i},\mathsf{e},\mathsf{l})$.
Let $\Lambda$ be a Noetherian algebra. Note that the category of finitely generated right $\Lambda$-modules, denoted by $\modL$, does not have enough injectives generally.
So in order to discuss the extension dimension for cleft extensions of Noetherian algebras, we investigate it within the framework of cleft coextensions; see Lemma \ref{main lem0} and Theorem \ref{main thm}.

\begin{theorem}\label{thm2}
	Let $(\mathscr{B},\mathscr{A},\mathsf{i},\mathsf{e},\mathsf{r})$ be a cleft coextension of abelian categories with enough projectives and suppose $\mathsf{Proj}\mathscr{A}=\mathrm{add}(P)$ for some projective object $P$ in $\mathscr{A}$. If $\mathsf{r}$ is exact and the induced endofunctor $\mathsf{F}'$ is nilpotent,
	then $$\dim \mathscr{B}\leq \dim \mathscr{A}\leq n(\dim \mathscr{B}+1)-1,$$
	where $n$ is a positive integer such that $\mathsf{F}'^{n}=0$.
\end{theorem}

Finally we consider the dimension of bounded derived module categories involved in a cleft extension of module categories when the induced endofunctor $\mathsf{F}$ is left perfect and nilpotent (see Definition \ref{def:left perfect}).
For details, we refer to Theorem \ref{thm:dimension of triangulated category}.
\begin{theorem}\label{thm3}	
	Let $A$ and $B$ be two Noetherian algebras such that
 $(\ModB,\ModA,\mathsf{i},\mathsf{e},\mathsf{l})$ is a cleft extension of module categories.
	If the induced endofunctor $\mathsf{F}$ is left perfect and nilpotent,
	then $$\der \mathbf{D}^{b}(\modB)\leq \der \mathbf{D}^{b}(\modA)\leq n(\der \mathbf{D}^{b}(\modB)+1)-1,$$
	where $n$ is a positive integer such that $\mathsf{F}^{n}=0.$
\end{theorem}

As shown in \cite{GP1},  arrow removal on an admissible path algebra yields a cleft extension with specific homological properties, we get the following result.

\begin{theorem}
	Let $\Lambda=kQ/I$ be an admissible quotient of a path algebra $kQ$ over a field $k.$
	Suppose that there are arrows $a_{i}:\upsilon_{e_{i}}\ra \upsilon_{f_{i}}$ in $Q$ for $i=1,2,\cdots, t$ which do not occur in a set of minimal generators of $I$ in $kQ$ and $\Hom_{\Lambda}(e_{i}\Lambda,f_{i}\Lambda)=0$ for all $i,j$ in $\{1,2,\cdots,t\}$.
	Let $\Gamma=\Lambda/\Lambda\{\overline{a_{i}}\}_{i=1}^{t}\Lambda$.
	Then the following hold:
	\begin{enumerate}
		\item $\IT (\Gamma)\leq\IT(\Lambda)\leq 2\IT(\Gamma)+1.$
		\item $\dim (\Gamma)\leq\dim\Lambda\leq 2\dim(\Gamma)+1.$
		\item $\der \mathbf{D}^{b}(\modg)\leq\der\mathbf{D}^{b}(\modL)\leq 2\der \mathbf{D}^{b}(\modg)+1.$
	\end{enumerate}
\end{theorem}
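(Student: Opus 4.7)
The plan is to verify that the arrow removal operation from \cite{GP1} produces a cleft extension satisfying the hypotheses of Theorems~\ref{thm1}, \ref{thm2} and \ref{thm3} with nilpotency index $n=2$, and then to invoke each of those theorems in turn. In this way the two-sided inequalities in (1)--(3) all arise simply by substituting $n=2$ into the bound $n(\cdot+1)-1 = 2(\cdot)+1$.

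First I would recall from \cite{GP1} the construction: under the assumption that each arrow $a_{i}$ does not occur in a set of minimal generators of $I$ and that $\Hom_{\Lambda}(e_{i}\Lambda, f_{j}\Lambda) = 0$ for all $i,j$, the canonical surjection $\Lambda \twoheadrightarrow \Gamma$ induces a cleft extension $(\Modg, \ModL, \mathsf{i}, \mathsf{e}, \mathsf{l})$ in which $\mathsf{e}$ is the restriction of scalars along $\Lambda\to\Gamma$, both $\mathsf{l}$ and a right adjoint $\mathsf{r}$ are exact, and $\mathsf{e}$ preserves projectives. The key computational point is that, thanks to the $\Hom$-vanishing hypothesis, the two-sided ideal $J := \Lambda\{\overline{a_{i}}\}_{i=1}^{t}\Lambda$ is a projective $\Lambda$-bimodule, isomorphic to a direct sum of bimodules of the form $\Lambda e_{i} \otimes_{k} f_{i}\Lambda$. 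From this one deduces that both induced endofunctors $\mathsf{F}, \mathsf{F}'\colon \Modg \to \Modg$ satisfy $\mathsf{F}^{2} = 0 = \mathsf{F}'^{2}$, since composing twice forces a factor $J \otimes_{\Lambda} J$ that vanishes by the same $\Hom$-condition.

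Items (1) and (2) then follow directly by applying Theorem~\ref{thm1} and Theorem~\ref{thm2} respectively with $n=2$: the cleft extension and its coextension counterpart have the required exactness and projective-preservation properties, and $\mathsf{F}, \mathsf{F}'$ are nilpotent of order $2$. For item (3) I would additionally verify that $\mathsf{F}$ is left perfect in the sense of Definition~\ref{def:left perfect}; since $\mathsf{F}$ is isomorphic to tensoring with the $\Gamma$-bimodule obtained from $J$ and this bimodule is projective on both sides, the required boundedness of a projective resolution is immediate. Noting also that the admissible quotients $\Lambda$ and $\Gamma$ are finite-dimensional, hence Noetherian, Theorem~\ref{thm3} yields $\der \mathbf{D}^{b}(\modL) \leq 2\, \der \mathbf{D}^{b}(\modg) + 1$.

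The main obstacle is checking $\mathsf{F}^{2}=0$ together with the left perfectness of $\mathsf{F}$, both of which reduce to the bimodule decomposition $J \cong \bigoplus_{i} \Lambda e_{i} \otimes_{k} f_{i}\Lambda$; once this decomposition is in hand the rest is routine substitution into Theorems~\ref{thm1}--\ref{thm3}. The lower bounds $\IT(\Gamma)\le \IT(\Lambda)$ and the corresponding ones for $\dim$ and $\der$ are already the left-hand inequalities supplied by those same theorems, so no separate argument is needed.
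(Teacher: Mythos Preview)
Your proposal is correct and follows essentially the same approach as the paper: invoke \cite[Proposition 4.6]{GP1} to obtain the cleft extension with $\mathsf{l},\mathsf{r}$ exact, $\mathsf{e}$ preserving projectives and $\mathsf{F}^{2}=0$, then apply Theorems~\ref{thm1}, \ref{thm2} and \ref{thm3} with $n=2$. The only minor difference is in verifying left perfectness: the paper uses the splitting $\mathsf{el}\simeq \mathsf{F}\oplus \mathrm{Id}_{\modg}$ from \cite[Lemma 2.3]{GP1} to see that $\mathsf{F}$ is exact and preserves projectives (hence trivially left perfect), whereas you reach the same conclusion via projectivity of the underlying $\Gamma$-bimodule.
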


The paper is structured as follows. In Section 2, we introduce essential notations and provide the foundational definitions of cleft extensions and cleft coextensions of abelian categories, along with their basic homological properties necessary for subsequent proofs. Section 3 is dedicated to the proof of Theorem 1.1, while Section 4 presents the proof of Theorem 1.2. In Section 5, we give the proof of Theorem 1.3. Finally, we explore applications of these results to Morita context rings, $\theta$-extensions of rings, and tensor rings, demonstrating the practical implications of our theoretical framework.

\section { \bf Preliminaries}
\subsection{Conventions}
All rings considered in this paper are nonzero associative rings with identity, and consequently, all modules are unitary.
For a ring $R$, we work usually with right $R$-modules and write $\ModR$ for the category of right $R$-modules.
The full subcategory of finitely generated right $R$-modules is denoted by $\modR$.
By a module over a  Noetherian algebra $\Lambda$, we mean a finitely generated $\Lambda$-module.

Let $\mathscr{A}$ be an abelian category with enough projectives.
We denote by $\mathsf{Proj} \mathscr{A}$ the full subcategory of $\mathscr{A}$ consisting of projective objects.

A \emph{subcategory} of $\mathscr{A}$ is always assumed to be a full and additive subcategory  which is closed under isomorphisms, and the word \emph{functor}
will mean an additive functor between additive categories.
For a subclass $\mathcal{U}$ of $\mathscr{A}$, we use $\add (\mathcal{U})$ to
denote the subcategory of $\mathscr{A}$ consisting of
direct summands of finite direct sums of objects in $\mathcal{U}$.
When $\mathcal{U}$ consists of single object $U$, we write $\add (U)$ for $\add (\mathcal{U}).$

Let $\mathbf{C}(\mathscr{A})$ be the category of all complexes over $\mathscr{A}$ with chain maps, and the corresponding homotopy category
is denoted by $\mathbf{K}(\mathscr{A})$.
The derived category of $\mathscr{A}$ is denoted by $\mathbf{D}(\mathscr{A}),$ which is the localization of $\mathbf{K}(\mathscr{A})$ at all quasi-isomorphisms.
The full subcategory of $\mathbf{D}(\mathscr{A})$ consisting of complexes isomorphic to bounded complexes is denoted by $\mathbf{D}^{b}(\mathscr{A})$.

Unless otherwise stated, abelian categories are assumed to
	have enough projectives.

\subsection{Cleft extensions  of abelian categories}\label{Section2.2}
We first recall the definition of a cleft extension, which was introduced by Beligiannis (\cite{Be1,Be2}).

\begin{definition}(\cite[Definition 2.1]{Be1})
	A {\bf cleft extension} of an abelian category $\mathscr{B}$ is an abelian category $\mathscr{A}$ together with functors
	$$\scalebox{0.85}{\xymatrixcolsep{3pc}\xymatrix{
			\mathscr{B}\ar[rr]^{\mathsf{i}} && \mathscr{A}\ar[rr]^{\sf e} && \mathscr{B} \ar@/_2.0pc/[ll]_{\sf l} } }$$
	satisfying the following:
	\begin{enumerate}
		\item The functor $\mathsf{e}$ is faithful  and exact.
		\item The pair $(\mathsf{l},\mathsf{e})$ is an adjoint pair.
		\item There is a natural isomorphism $\varphi: \mathsf{ei}\ra \mathrm{Id}_{\mathscr{B}}$.
	\end{enumerate}
\end{definition}

In the following, we denote a cleft extension by $(\mathscr{B},\mathscr{A},\mathsf{i},\mathsf{e},\mathsf{l})$.
The above data give rise to more structure information.
For instance, it follows from \cite[Lemma 2.2]{GP1} that $\mathsf{i}$ is fully faithful and exact. Moreover, there is a functor $\mathsf{q}:\mathscr{A}\ra \mathscr{B}$ such that $(\mathsf{q},\mathsf{i})$ forms an adjoint pair.

We now explain the certain endofunctors on the categories $\mathscr{A}$ and $\mathscr{B}$, which are important for our result. Denote by $\nu:\rm Id_{\mathscr{B}}\ra \mathsf{el}$ the unit and by $\mu: \mathsf{le}\ra \mathrm{Id}_{\mathscr{A}}$ the counit of the adjoint pair $(\mathsf{l},\mathsf{e})$.
The unit and counit satisfy the following relations
$$\mathrm{Id}_{\mathsf{l}(B)}=\mu_{\mathsf{l}(B)}\mathsf{l}(\nu_{B})$$
and
$$\mathrm{Id}_{\mathsf{e}(A)}=\mathsf{e}(\mu_{A})\nu_{\mathsf{e}(A)}$$
for all $A$ in $\mathscr{A}$ and $B$ in $\mathscr{B}$.
It follows that $\mathsf{e}(\mu_{A})$ is a (split) epimorphism. Since $\sf e$ is faithful, this implies that $\mu_{A}: \mathsf{le}(A)\ra A$ is an epimorphism for every $A\in \mathscr{A}$.
Hence we have the following exact sequence
$$0\ra \ke \mu_{A}\ra \mathsf{le}(A)\xrightarrow{\mu_{A}} A\ra 0$$
for every $A\in \mathscr{A}.$
The assignment $A\mapsto\ke \mu_{A}$ defines an endofunctor $\mathsf{G}:\mathscr{A}\ra \mathscr{A}$.
Consider now an object $B$ in $\mathscr{B}$.
Then we have the short exact sequence in $\mathscr{A}$:
\begin{equation}\label{se.G}
0\ra \mathsf{G}(\mathsf{i}(B))\ra \mathsf{le}(\mathsf{i}(B))\xrightarrow{\mu_{\mathsf{i}(B)}} \mathsf{i}(B)\ra 0.
\end{equation}
We denote by $\mathsf{F}(B)=\mathsf{e}(\mathsf{G}(\mathsf{i}(B))).$
Viewing the natural isomorphism $\mathsf{ei}(B)\xrightarrow{\sim} B$ as an identification and applying the exact functor $\mathsf{e}:\mathscr{A}\ra \mathscr{B}$ to (\ref{se.G}), we obtain the exact sequence:
\begin{equation}
	\label{se.F}
	0\ra \mathsf{F}(B)\ra \mathsf{el}(B)\xrightarrow{}B\ra 0.
\end{equation}
The assignment $B\mapsto \mathsf{F}(B)$ defines an endofunctor $\mathsf{F}:\mathscr{B}\ra \mathscr{B}.$

 In the following lemma, we consider $\mathsf{F}^{n}$ and $\mathsf{G}^{n}$ and collect some results from \cite{Be1,GP1} which are used in the proof of our main result. We mention that $\mathsf{F}^{0}$ and $\mathsf{G}^{0}$ are identity functors.

\begin{lemma}\label{lem of cleft extension}
	Let $(\mathscr{B},\mathscr{A},\mathsf{i},\mathsf{e},\mathsf{l})$ be a cleft extension of abelian categories. The following statements hold:
	
	\begin{enumerate}
		\item[$(1)$] Let $n\geq 1$. Then $\mathsf{F}^{n}=0$ if and only if $\mathsf{G}^{n}=0.$
		
		\item[$(2)$] For any $n\geq 1$ and any $A\in \mathscr{A}$, there is an exact sequence
		$$0\ra \mathsf{G}^{n}(A)\ra \mathsf{l}\mathsf{F}^{n-1}\mathsf{e}(A)\ra \mathsf{G}^{n-1}(A)\ra 0.$$
	\end{enumerate}
\end{lemma}
\begin{proof}
	(1) This follows directly from \cite[Lemma 2.4]{GP1}.
	
	(2) By the definition of $\sf G$, there is an exact sequence
	\begin{align}
		\label{lem:exact sequence}
		0\ra \mathsf{G}^{n}(A)\ra  \mathsf{le}\mathsf{G}^{n-1}(A)\ra \mathsf{G}^{n-1}(A)\ra  0
	\end{align}
	for any $A\in \mathscr{A}$ and $n\geq 1$.
	 Since $\mathsf{le}\mathsf{G}^{n}\simeq \mathsf{l}\mathsf{F}^{n}\mathsf{e}$ by \cite[Lemma 2.4(1)]{GP1} or \cite[Subsection 5.2]{Be1} for any $n\geq 1$, it follows that the desired exact sequence is obtained by replacing $\mathsf{le}\mathsf{G}^{n-1}(A)$ with $\mathsf{l}\mathsf{F}^{n-1}\mathsf{e}(A)$ in \eqref{lem:exact sequence}.
\end{proof}

\subsection{Cleft coextensions  of abelian categories}\label{Section 2.3}
There is a dual notion to cleft extensions, which is called cleft coextensions.
We first recall its definition.
\begin{definition}(\cite[Definition 2.1]{Be1})\label{def:cleft coextension}
	A {\bf cleft coextension} of an abelian category $\mathscr{B}$ is an abelian category $\mathscr{A}$ together with functors
	$$\scalebox{0.85}{\xymatrixcolsep{3pc}\xymatrix{
			\mathscr{B}\ar[rr]^{\sf i} && \mathscr{A}\ar[rr]^{\sf e} && \mathscr{B} \ar@/^2.0pc/[ll]^{\sf r} } }$$
	satisfying the following:
	\begin{enumerate}
		\item The functor $\sf e$ is faithful and exact.
		\item The pair $(\sf e,\sf r)$ is an adjoint pair.
		\item There is a natural isomorphism $\varphi:{\sf ei}\ra \mathrm{Id}_{\mathscr{B}.}$
	\end{enumerate}
\end{definition}


In the following, we denote a cleft coextension by $(\mathscr{B},\mathscr{A},\mathsf{i},\mathsf{e},\mathsf{r})$. We now describe the certain endofunctors on the categories $\mathscr{A}$ and $\mathscr{B}$, which are important for the proof of Theorem \ref{main thm}. Denote by $\nu':\rm Id_{\mathscr{A}}\ra \sf re$ the unit and by $\mu':{\sf er}\ra \mathrm{Id}_{\mathscr{B}}$ the counit of the adjoint pair $(\sf e,r)$.
The unit and counit satisfy the following relations
$$ \mathrm{Id}_{\mathsf{e}(A)}=\mu'_{\mathsf{e}(A)}\mathsf{e}(\nu'_{A})$$
and
$$ \mathrm{Id}_{\mathsf{r}(B)}=\mathsf{r}(\mu'_{B})\nu'_{\mathsf{r}(B)})$$
for all $A$ in $\mathscr{A}$ and $B$ in $\mathscr{B}$.
It follows that $\mathsf{e}(\nu'_{A})$ is a (split) monomorphism. Since $\sf e$ is faithful, this implies that $\nu'_{A}: A\ra \mathsf{re}(A)$ is a monomorphism for every $A\in \mathscr{A}$.
Hence we have the following exact sequence
$$0\ra A\xrightarrow{\nu'_{A}} \mathsf{re}(A)\ra \coker\nu'_{A} \ra 0$$
for every $A\in \mathscr{A}.$
The assignment $A\mapsto\coker \nu'_{A}$ defines an endofunctor $\mathsf{G}':\mathscr{A}\ra \mathscr{A}$.
Consider now an object $B$ in $\mathscr{B}$.
Then we have the short exact sequence in $\mathscr{A}$:
\begin{equation}\label{se.G1}
0\ra \mathsf{i}(B)\xrightarrow{\nu'_{\mathsf{i}(B)}}\mathsf{re}(\mathsf{i}(B)) \ra \mathsf{G}'(\mathsf{i}(B)) \ra 0.
\end{equation}
We denote by $\mathsf{F}'(B)=\mathsf{e}(\mathsf{G}'(\mathsf{i}(B))).$
Viewing the natural isomorphism $\mathsf{ei}(B)\xrightarrow{\sim} B$ as an identification and applying the exact functor $\mathsf{e}:\mathscr{A}\ra \mathscr{B}$ to (\ref{se.G1}), we get the exact sequence:
$$0\ra B\xrightarrow{} \mathsf{er}(B)\ra \mathsf{F}'(B)\ra 0.$$
The assignment $B\mapsto \mathsf{F}'(B)$ defines an endofunctor $\mathsf{F}':\mathscr{B}\ra \mathscr{B}.$

\begin{remark}
(\cite[Remark 2.7]{Be1}, \cite[Remark 2.5]{GP1}) A cleft extension of an abelian category $(\mathscr{B},\mathscr{A},\mathsf{i},\mathsf{e},\mathsf{l})$ is the upper part of a cleft coextension $(\mathscr{B},\mathscr{A},\mathsf{i},\mathsf{e},\mathsf{r})$ if and only if the endofunctor $\mathsf{F}$ appearing in \eqref{se.F} has a right adjoint $\mathsf{F}'$.
	In this case, $\mathsf{F}'$ is the endofunctor of $\mathscr{B}$ induced by the cleft coextension $(\mathscr{B},\mathscr{A},\mathsf{i},\mathsf{e},\mathsf{r})$.
\end{remark}

Every property of a cleft coextension of an abelian category is dual to that of a cleft extension of an abelian category.
The following lemma will be used in the description of extension dimension. By convention, given an endofunctor $\sf{F}$, we set $\sf{F}^{0}$ to be the identity functor.

\begin{lemma} \label{lem of cleft coextension}
	Let $(\mathscr{B},\mathscr{A},\mathsf{i},\mathsf{e},\mathsf{r})$ be a cleft coextension of abelian categories. The following statements hold:
	
	\begin{enumerate}
		\item[$(1)$] Let $n\geq 1$. Then $\mathsf{F}'^{n}=0$ if and only if $\mathsf{G}'^{n}=0.$
		
		\item[$(2)$] For any $n\geq 1$ and any $A\in \mathscr{A}$, there is an exact sequence
		$$0\ra \mathsf{G}'^{n-1}(A)\ra \mathsf{r}\mathsf{F}'^{n-1}\mathsf{e}(A)\ra \mathsf{G}'^{n}(A)\ra 0.$$
	\end{enumerate}
\end{lemma}
\begin{proof}
	(1) The proof is dual to that of \cite[Lemma 2.4]{GP1}.
	
	(2) By the definition of $\mathsf{G}'$, there is an exact sequence
	$$0\ra \mathsf{G}'^{n-1}(A)\ra  \mathsf{re}\mathsf{G}'^{n-1}(A)\ra \mathsf{G}'^{n}(A)\ra  0$$
	for any $A\in \mathscr{A}$ and $n\geq 1$.
	Since $\mathsf{e}\mathsf{G}'^{n}\simeq \mathsf{F}'^{n}\mathsf{e}$ by the dual of \cite[Lemma 2.4(1)]{GP1} for any $n\geq 1$, it follows that we obtain the following exact sequence
	$$0\ra \mathsf{G}'^{n-1}(A)\ra \mathsf{r}\mathsf{F}'^{n-1}\mathsf{e}(A)\ra \mathsf{G}'^{n}(A)\ra 0,$$
	which completes the proof.
\end{proof}
\subsection{Cleft extension of module categories}
We now turn our attention to cleft extensions of module categories, which can also be viewed as cleft coextensions of module categories by the following lemma.
\begin{lemma}\label{extension-coextension}
	{\rm (}\cite[Proposition 4.19]{Kostas}{\rm)} A cleft extension  $(\Modg,\ModL,\mathsf{i},\mathsf{e},\mathsf{l})$ of module categories is the upper part of a cleft coextension. Moreover, if $\mathsf{F}$ is  nilpotent, then so is
	$\mathsf{F}'$.
\end{lemma}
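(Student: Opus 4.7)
The claim has two parts: (i) the cleft extension $(\Modg,\ModL,\mathsf{i},\mathsf{e},\mathsf{l})$ underlies a cleft coextension, and (ii) if $\mathsf{F}$ is nilpotent then so is $\mathsf{F}'$. By the Remark immediately preceding the statement, part (i) is equivalent to producing a right adjoint $\mathsf{F}'$ to the endofunctor $\mathsf{F}\colon\Modg\to\Modg$. The plan is to establish existence of this adjoint by a representability argument inside module categories, and then to deduce (ii) purely formally from the resulting adjunction.

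For part (i), the decisive observation is that $\Modg$ and $\ModL$ are Grothendieck abelian categories with small projective generators, so by the special adjoint functor theorem it is enough to check that $\mathsf{F}$ preserves arbitrary small colimits. I would give $\mathsf{F}$ an explicit bimodule description: unpacking the adjunction $\mathsf{l}\dashv\mathsf{e}$ together with the natural isomorphism $\mathsf{ei}\simeq\mathrm{Id}_{\mathscr{B}}$ identifies $\mathsf{el}$ with the tensor functor $-\otimes_{\Gamma}\mathsf{el}(\Gamma)$ against the canonical $\Gamma$-bimodule structure on $\mathsf{el}(\Gamma)$, and the defining exact sequence
$$0\to\mathsf{F}(B)\to\mathsf{el}(B)\to B\to 0$$
then represents $\mathsf{F}$ as $-\otimes_{\Gamma}\mathsf{F}(\Gamma)$. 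Tensor with a fixed bimodule preserves all colimits, so the right adjoint $\mathsf{F}'\simeq\Hom_{\Gamma}(\mathsf{F}(\Gamma),-)$ exists. This simultaneously produces a right adjoint $\mathsf{r}$ of $\mathsf{e}$ and equips $(\Modg,\ModL,\mathsf{i},\mathsf{e},\mathsf{r})$ with the structure of a cleft coextension, the required natural isomorphism $\mathsf{ei}\simeq\mathrm{Id}_{\mathscr{B}}$ being inherited from the given cleft-extension datum.

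For part (ii), only the abstract adjunction $\mathsf{F}\dashv\mathsf{F}'$ is needed. Composition of adjoints gives $\mathsf{F}^{n}\dashv\mathsf{F}'^{n}$ for every $n\geq 1$, and hence a natural bijection
$$\Hom_{\Gamma}\bigl(X,\mathsf{F}'^{n}(Y)\bigr)\cong\Hom_{\Gamma}\bigl(\mathsf{F}^{n}(X),Y\bigr)$$
for all $X,Y\in\Modg$. If $\mathsf{F}^{n}=0$, the right-hand side vanishes for every $X$ and $Y$; specialising to $X=\mathsf{F}'^{n}(Y)$ forces $\mathrm{id}_{\mathsf{F}'^{n}(Y)}=0$, whence $\mathsf{F}'^{n}(Y)=0$ for every $Y$ and thus $\mathsf{F}'^{n}=0$.

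The main obstacle is part (i), specifically the identification of $\mathsf{F}$ as a cocontinuous (equivalently, tensor-product) functor. The subtlety is that $\mathsf{F}(B)$ is defined as a kernel and kernels do not commute with colimits in general, so the representability has to be obtained structurally, by first recognising $\mathsf{el}$ as a tensor functor via Watts-type reasoning on the adjunction $\mathsf{l}\dashv\mathsf{e}$, rather than by a direct colimit-preservation check on the defining short exact sequence. Once this identification is in place, the adjoint functor theorem supplies $\mathsf{F}'$ cleanly, and part (ii) then follows formally with no further input from module theory.
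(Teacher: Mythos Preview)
The paper does not supply its own proof of this lemma; it is simply quoted from \cite[Proposition 4.19]{Kostas}. So there is nothing in-paper to compare against, and your proposal must stand on its own. Your part (ii) is correct and clean: once $\mathsf{F}\dashv\mathsf{F}'$ is known, composing adjunctions gives $\mathsf{F}^{n}\dashv\mathsf{F}'^{\,n}$, and a right adjoint of the zero functor is zero.

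Part (i) has a genuine gap. You correctly reduce, via the Remark immediately preceding the lemma, to showing that $\mathsf{F}$ admits a right adjoint, and you aim to apply the special adjoint functor theorem by identifying $\mathsf{el}$ (hence its natural summand $\mathsf{F}$) with a tensor functor through ``Watts-type reasoning on the adjunction $\mathsf{l}\dashv\mathsf{e}$''. But that adjunction only yields cocontinuity of $\mathsf{l}$: Watts gives $\mathsf{l}\simeq-\otimes_{\Gamma}\mathsf{l}(\Gamma)$, whence $\mathsf{e}\simeq\Hom_{\Lambda}(\mathsf{l}(\Gamma),-)$, and $\Hom_{\Lambda}(\mathsf{l}(\Gamma),-)$ preserves arbitrary direct sums only if $\mathsf{l}(\Gamma)$ is finitely generated over $\Lambda$. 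Neither the cleft-extension axioms nor the isomorphism $\mathsf{ei}\simeq\mathrm{Id}$ deliver this directly; they give that $\mathsf{l}(\Gamma)$ is a projective generator and that $\mathsf{l}(\Gamma)\otimes_{\Lambda}\mathsf{q}(\Lambda)\simeq\Gamma$, but not compactness. You flag this step as ``the main obstacle'', yet the proposed resolution does not close it, so SAFT cannot be invoked as written.

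The argument that actually works passes through the structure theorem recorded later in this paper as Remark~\ref{remark:fg} (from \cite{Kostas,KP}): every cleft extension of module categories is equivalent to one arising from a $\theta$-extension $\Gamma\ltimes_{\theta}M$, where $\mathsf{e}$ is restriction along the ring map $\Gamma\to\Gamma\ltimes_{\theta}M$ and therefore has the explicit right adjoint $\mathsf{r}=\Hom_{\Gamma}(\Gamma\ltimes_{\theta}M,-)$; in that model $\mathsf{F}\simeq-\otimes_{\Gamma}M$ with right adjoint $\Hom_{\Gamma}(M,-)$, and your part (ii) then applies verbatim. If you wish to keep the SAFT strategy, you must supply an independent reason why $\mathsf{e}$ preserves arbitrary coproducts.
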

We now give some examples of cleft extensions of module categories which will be used throughout the paper.
\begin{example}\label{ex:Morita ring} (\cite[Example 5.4]{Kostas})
	Let $A$ and $B$ be two rings, and let $_{B}M_{A}$ and $_{A}N_{B}$ be bimodules.
	Consider the Morita context ring $$\Lambda =\left(\begin{matrix}  A & {_{A}}N_{B}\\  {_{B}}M_{A} & B \\\end{matrix}\right)$$
	with zero bimodule homomorphisms.
	
	We recall from \cite{eg82} that the module categories over the Morita context ring $\Lambda$ is equivalent to a category whose objects are tuples $(X,Y,f,g)$, where $X\in \text{Mod-}A$, $Y\in \text{Mod-}B$, $f\in \text{Hom}_{B}(X\oo_{A}N,Y)$ and $g\in \text{Hom}_{A}(Y\oo_{B}M,X)$.
	Thus we have a cleft extension of module categories, which is also a  upper part of a cleft coextension by Lemma \ref{extension-coextension}
	$$\scalebox{0.85}{\xymatrixcolsep{3pc}\xymatrix{
			\ModAB\ar[rr]^{\sf i} && \ModL\ar[rr]^{\sf e}  && \ModAB. \ar@/_2.0pc/[ll]_{\sf l} \ar@/^2.0pc/[ll]^{\sf r}} }$$
	These functors are given as follows:
	\begin{enumerate}
		\item $\mathsf{i}(X,Y)=(X,Y,0,0).$
		\item $\mathsf{e}(X,Y,f,g)=(X,Y).$
		\item $\mathsf{l}(X,Y)=(X,X\oo_{A}N,0,1)\oplus (Y\oo_{B}M,Y,1,0)$.
		\item $\mathsf{r}(X,Y)=(X,\Hom_{A}(M,X),0,\varepsilon'_{X})\oplus (\Hom_{B}(N,Y),Y,\varepsilon_{Y},0)$, where
		\begin{center}
			$\varepsilon':\Hom_{A}(M,-)\oo_{B}M\ra \mathrm{Id}_{\ModA}~\hspace{0.5cm}\text{and}\hspace{0.5cm} \varepsilon_{}:\Hom_{B}(N,-)\oo_{A}N\ra \mathrm{Id}_{\ModB}$
		\end{center}
		are the counits of adjoint pairs $(-\oo_{B}M,\Hom_{A}(M,-))$ and $(-\oo_{A}N,\Hom_{B}(N,-))$ respectively.
		\item $\mathsf{F}(X,Y)=(Y\oo_{B}M,X\oo_{A}N)$.
		\item $\mathsf{F}'(X,Y)=(\Hom_{B}(N,Y),\Hom_{A}(M,X))$.
	\end{enumerate}
\end{example}

\begin{example}\label{ex:extension}(\cite[Section 5.2]{Kostas})
	Let $\Gamma$ be a ring, $M$ a $\Gamma$-$\Gamma$-bimodule, and $\theta:M\oo_{\Gamma}M\ra M$ an associative $\Gamma$-bimodule homomorphism, i.e., a bimodule homomorphism such that $\theta(\theta\otimes \mathrm{Id}_{M})=\theta(\mathrm{Id}_{M}\otimes \theta)$.
	Then $\theta$-\emph{extension} of $\Gamma$ by $M$, denoted by $\Gamma\ltimes_{\theta}M$, is defined to be the ring with underlying group $\Gamma\oplus M$ and the multiplication given as follows:
	$$(\gamma,m)\cdot(\gamma',m')=(\gamma\gamma',\gamma m'+m\gamma'+\theta(m\oo m')),$$
	for $\gamma,\gamma'\in\Gamma$ and $m,m'\in M$.
	
	Consider the $\theta$-extension $\Gamma\ltimes_{\theta}M$ and ring homomorpism $f:\Gamma\ltimes_{\theta}M\ra \Gamma$ and $g:\Gamma\ra \Gamma\ltimes_{\theta}M$ given by $(\gamma,m)\mapsto\gamma$ and $\gamma\mapsto(\gamma,0)$ respectively.
	Denote by $\mathbf{Z}:\Modg\ra \ModG$ and $\mathbf{U}:\ModG\ra \Modg$ the restriction functors induced by $f$ and $g$ respectively.
	Then we have the cleft extension of module categories, which is also a upper part of a cleft coextension by Lemma \ref{extension-coextension}:
	
	$$\scalebox{0.85}{\xymatrixcolsep{3pc}\xymatrix{
			\Modg\ar[rr]^{\mathbf{Z}} && \ModG\ar[rr]^{\mathbf{U}}  && \Modg. \ar@/_2.0pc/[ll]_{\mathbf{T}=-\oo_{\Gamma}\Gamma\ltimes_{\theta}M} \ar@/^2.0pc/[ll]^{\mathbf{H}=\mathrm{Hom}_{\Gamma}(\Gamma\ltimes_{\theta}M,-)} } }$$
 A categorical description of the functors of this cleft extension can be found in \cite{Ma}.
\end{example}

\begin{remark} (\cite[Theorem 2.6, Proposition 3.3]{Be1} or \cite[Proposition 6.9]{KP}) \label{remark:fg}
	Let $(\Modg,\ModL,\mathsf{i},\mathsf{e},\mathsf{l})$ be a cleft extension of module categories.
	Then there is a $\Gamma$-bimodule $M$, an associative $\Gamma$-bimodule homomorphism $\theta:M\oo_{\Gamma}M\ra M$, and an equivalence $\ModL\xrightarrow{\sim} \ModG$
	making the following diagram commutative:
	$$\scalebox{0.85}{\xymatrixcolsep{3pc}\xymatrix{
			\Modg\ar[rr]^{\mathsf{i}}\ar@{=}[dd] && \ModL\ar[rr]^{\mathsf{e}}\ar[dd]_{\simeq} \ar@/_2.0pc/[ll]_{\mathsf{q}}  && \Modg \ar@/_2.0pc/[ll]_{\mathsf{l}} \ar@{=}[dd] \\
			&&&\\
			\Modg\ar[rr]^{\mathbf{Z}} && \ModG\ar[rr]^{\mathbf{U}} \ar@/_2.0pc/[ll]_{\mathbf{C}} && \Modg \ar@/_2.0pc/[ll]_{\mathbf{T}}
	} }$$
	where the bottom diagram is the cleft extension associated with $\Gamma\ltimes_{\theta}M$.
	If moreover $\Gamma$ and $\Lambda$ are module finite over a commutative Noetherian ring, i.e., they are Noetherian algebras, then these functors restrict to the level of finitely generated modules by \cite[Lemma 5.12]{Kostas} and \cite[Proposition 18.32]{Lam}.
	Consequently, we obtain the following cleft extension
	$$\scalebox{0.85}{\xymatrixcolsep{3pc}\xymatrix{
			\modg\ar[rr]^{\mathsf{i}} && \modL\ar[rr]^{\mathsf{e}} \ar@/_2.0pc/[ll]_{\mathsf{q}} && \modg, \ar@/_2.0pc/[ll]_{\mathsf{l}} } }$$
	which is also a upper part of a cleft coextension by Lemma \ref{extension-coextension}.
\end{remark}

\section{Igusa-Todorov distances}
In this section we introduce the notion of Igusa-Todorov distance of an abelian category, which is a generalization of Igusa-Todorov distance of an Artin algebra given in \cite{Z1}.
To this end, it is convenient to introduce the following notion.

\begin{definition}
Let $\mathscr{A}$ be an abelian category and $n$ a positive integer.
If there is an exact sequence
$$0\ra K\ra P_{n-1}\ra P_{n-2}\ra \cdots \ra P_{0}\ra X\ra 0$$
in $\mathscr{A}$ with all $P_{i}$ projective for $0\leq i\leq n-1$, then $K$ is called an $n$-syzygy object of $X$. We denote by $\Omega^{n}(\mathscr{A})$ the subcategory of $\mathscr{A}$ consisting of all $n$-syzygy objects.
\end{definition}

\begin{remark}
Given an object $X$ in an abelian category $\mathscr{A}$, its $n$-syzygy objects are not unique up to isomorphism, but are unique up to projective summands.
\end{remark}

\begin{definition}\label{def-IT-cat}
	Let $m$ and $n$ be nonnegative integers. An abelian category $\mathscr{A}$
	with enough projective objects is called an {\bf $(m,n)$-Igusa-Todorov category} if there is an object $U\in \mathscr{A}$ such that for any $A\in \Omega^{n}(\mathscr{A})$, there is an exact sequence
	$$0\ra U_{m}\ra U_{m-1}\ra\cdots\ra U_{1}\ra U_{0}\ra A\ra 0,$$
	with $U_{i}\in \add (U)$ for each $0\leq i\leq m$.
\end{definition}

\begin{definition}\label{def:IT distance}
	Let $\mathscr{A}$ be an abelian category.
	The {\bf Igusa-Todorov distance} of $\mathscr{A}$ is defined as
	$$\IT(\mathscr{A}):=\inf\{m \mid\mathscr{A}\text{ is an}~ (m,n)\text{-Igusa-Todorov category for some nonnegative integer}~n\}.$$
\end{definition}

\begin{remark}
Let $\Lambda$ be an Artin algebra. Then $\modL$ is an $(m,n)$-Igusa-Todorov category if and only if $\Lambda$ is an an $(m,n)$-Igusa-Todorov algebra in the sense of \cite[Definition 2.1]{Z}.
	It follows that the Igusa-Todorov distance of $\modL$ is the Igusa-Todorov distance of $\Lambda$; see \cite[definition 2.27]{Z1}.
\end{remark}

\begin{remark}
Let $\Lambda$ be an Artin algebra and let $n$ be a nonnegative integer.
\begin{enumerate}
\item Recall that $\Lambda$ is called {\bf syzygy-finite} if $\Omega^{n}(\modL)$ is representation-finite for some nonnegative integer $n$; that is, the number of non-isomorphic indecomposable direct summands of modules in $\Omega^{n}(\modL)$ is finite.
    It follows from Definition \ref{def:IT distance} that $\Lambda$ is syzygy-finite if and only if $\IT(\modL)=0.$
\item Recall from \cite[Definition 2.2]{W} that $\Lambda$ is called an {\bf $n$-Igusa-Todorov algebra} if there exists a $\Lambda$-module
$U$ such that for any $\Lambda$-module $X\in \Omega^{n}(\modL)$ there exists an exact sequence $0\ra U_{0}\ra U_{1}\ra X\ra 0$
with $U_{i}\in \mathrm{add}(U)$ for each $0\leq i \leq 1$.
 Furthermore, $\Lambda$ is call an {\bf Igusa-Todorov algebra} if it is an $n$-Igusa-Todorov algebra for some nonnegative integer $n.$
Then it follows from Definition \ref{def:IT distance} that $\Lambda$ is an  Igusa-Todorov algebra  if and only if $\IT(\modL)\leq1.$
\end{enumerate}
\end{remark}

\begin{example}{\rm
		Let $k$ be a field, and let $n\geq 1$ be an integer, and let
		$\Lambda$ be the quantum exterior algebra
		$$\Lambda=k[ X_{1},\cdots, X_{n}]/(X_{i}^{2},\{X_{i}X_{j}-q_{ij}X_{j}X_{i}\}_{i<j}),$$
		where $0\neq q_{ij}\in k$ and all the $q_{ij}$ are the roots of unity.
		We know from \cite[Example 2.35]{Z1} that $\IT(\modL)=n-1.$
	}
\end{example}

The following lemma can be found for Artin algebras in \cite[Lemma 3.3]{XuD16}.
For the convenience of the reader, we provide a proof.
\begin{lemma} \label{IT-lem1}
	Let $\mathscr{A}$ be an abelian category. Then
	for any exact sequence
	$$0\longrightarrow X_{1} \longrightarrow  X_{2} \longrightarrow  X_{3} \longrightarrow 0$$
	in $\mathscr{A}$, we have the following exact sequence
	$$0\longrightarrow K\longrightarrow X_{1}\oplus P \longrightarrow X_{2}\longrightarrow 0,$$
	where $P$ is projective in $\mathscr{A}$ and $K$ is a 1-syzygy object of $X_{3}$.
\end{lemma}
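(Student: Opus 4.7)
The plan is to establish the result by a standard pullback construction along a projective presentation of $X_{3}$. Since $\mathscr{A}$ has enough projectives, I would first fix an epimorphism $\pi\colon P\ra X_{3}$ with $P$ projective, whose kernel is by definition (a representative of) $\Omega_{\mathscr{A}}^{1}(X_{3})$, giving the short exact sequence
$$0\lr \Omega_{\mathscr{A}}^{1}(X_{3})\lr P\xrightarrow{\pi} X_{3}\lr 0.$$
Denote the given epimorphism $X_{2}\twoheadrightarrow X_{3}$ by $\pi'$.

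Next, form the pullback $E=P\times_{X_{3}}X_{2}$ of $\pi$ and $\pi'$. By the standard properties of pullbacks in an abelian category, the two projection morphisms $E\ra P$ and $E\ra X_{2}$ are epimorphisms (since $\pi$ and $\pi'$ are), and the kernel of each projection coincides with the kernel of the parallel map in the square. This yields two short exact sequences
$$0\lr X_{1}\lr E\lr P\lr 0\quad\text{and}\quad 0\lr \Omega_{\mathscr{A}}^{1}(X_{3})\lr E\lr X_{2}\lr 0.$$

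The key observation is that the first sequence splits because $P$ is projective, so $E\cong X_{1}\oplus P$. Substituting this isomorphism into the second sequence produces exactly the required exact sequence
$$0\lr \Omega_{\mathscr{A}}^{1}(X_{3})\lr X_{1}\oplus P\lr X_{2}\lr 0.$$
There is no real obstacle here beyond carefully identifying the kernels in the pullback square; the argument is purely diagrammatic and works in any abelian category with enough projectives.
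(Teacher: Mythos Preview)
Your argument is correct and is the standard pullback proof of this fact. Note that the paper does not actually supply its own proof of this lemma: it is stated with a citation to \cite[Lemma~3.3]{XuD16} and used as a black box, so there is nothing further to compare against.
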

\begin{proof}
 Since $\mathscr{A}$ has enough projectives, there is an exact sequence
$$0\ra K\ra P\ra X_{3}\ra 0$$
 with $P$ projective.
 Consider the following pullback diagram:
 $$\xymatrix{&&0\ar[d]&0\ar[d]\\
		&&K\ar@{=}[r]\ar[d]&K\ar[d]\\
		0\ar[r]&X_{1}\ar[r]\ar@{=}[d]&Z\ar[d]\ar[r]&P\ar[d] \ar[r]&0\\
		0\ar[r] &X_{1}\ar[r]&X_{2}\ar[d]\ar[r]\ar[d]&X_{3}\ar[r]\ar[d]&0 \\&&0&0}$$
Since $P$ is projective, the exact sequence
$$0\ra X_{1}\ra Z\ra P\ra 0$$
splits and so $Z\cong X_{1}\oplus P$.
Thus we obtain an exact sequence
$$0\ra K\ra X_{1}\oplus P \ra X_{2}\ra 0$$
with $P$ projective and $K$ a 1-syzygy object of $X_{3}$, as desired.
\end{proof}

\begin{lemma}\label{lem-exact-syzygy}
	Let $\mathscr{A}$ and $\mathscr{B}$ be abelian categories and $n \geq 0$, and let $\mathsf{F}: \mathscr{A}\to \mathscr{B}$ be an exact
	functor preserving projective objects.
Then for any $X\in \mathscr{A}$ and any $n$-syzygy object $K$ of $X$, the image $\mathsf{F}(K)$ is an $n$-syzygy object of $\mathsf{F}(X)$.
\end{lemma}
\begin{proof}
Let $K$ be an $n$-syzygy object $K$ of $X$. Then there is an exact sequence
\begin{equation}\label{preserve syzygy}
0\ra K\ra P_{n-1}\ra P_{n-2}\ra \cdots \ra P_{0}\ra X\ra 0
\end{equation}
in $\mathscr{A}$ with all $P_{i}$ projective for $0\leq i\leq n-1$.
Since $\mathsf{F}$ is exact, applying $\sf F$ to \eqref{preserve syzygy}, we obtain exact sequence
\begin{equation*}
0\ra \mathsf{F}(K)\ra \mathsf{F}(P_{n-1})\ra \mathsf{F}(P_{n-2})\ra \cdots \ra \mathsf{F}(P_{0})\ra \mathsf{F}(X)\ra 0.
\end{equation*}
We mention that each $\mathsf{F}(P_{i})$ is projective for $0\leq i\leq n-1$ as $\mathsf{F}$ preserves projective objects.
It follows that $\mathsf{F}(K)$ is an $n$-syzygy object of $\mathsf{F}(X)$.
\end{proof}

The following lemma is key to the proof of Theorem \ref{thm:IT}.
\begin{lemma}\label{IT-lem2}
	Let $(\mathscr{B},\mathscr{A},\mathsf{i},\mathsf{e},\mathsf{l})$ be a cleft extension of abelian categories such that functor $\mathsf{l}$ is exact.
	Assume that the sequence $0\ra X\ra  \mathsf{l}(B)\ra A\ra 0$  is  exact with $B\in \mathscr{B}$ and $B_{t}$ being a $t$-syzygy object of $B$.
	If there is an exact sequence
	$$0\ra V_{m}\xrightarrow{d_{m}}V_{m-1}\xrightarrow{d_{m-1}}\cdots\ra V_{1}\xrightarrow{d_{1}}V_{0}\xrightarrow{d_{0}}B_{t}\ra 0$$
	for some nonnegative integer $t$,
	then we have the following exact sequence
	$$0\ra X_{t+m}\ra \mathsf{l}(V_{m})\oplus P_{m}\ra\cdots \ra \mathsf{l}(V_{1})\oplus P_{1}\ra \mathsf{l}(V_{0})\oplus P_{0}\ra A_{t}\ra 0,$$
	where each $P_{i}$ belongs to $\mathsf{Proj} \mathscr{A}$ for $0\leq i\leq m$, $A_{t}$ is a $t$-syzygy object of $A$ and $X_{t+m}$ is a $(t+m)$-syzygy object of $X.$
\end{lemma}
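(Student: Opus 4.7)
The plan is an induction on $m$, where the inductive step is a systematic application of Lemma~\ref{IT-lem1}, set up after first transporting the given sequence from $\mathscr{B}$ to $\mathscr{A}$ via the exact functor $\mathsf{l}$. Since $\mathsf{l}$ is left adjoint to the exact faithful functor $\mathsf{e}$, it preserves projective objects, and so applying $\mathsf{l}$ to the given sequence produces an exact sequence
$$0\to \mathsf{l}(V_m)\to\cdots\to \mathsf{l}(V_0)\to \mathsf{l}(\Omega^t_{\mathscr{B}}(B))\to 0$$
in $\mathscr{A}$. Lemma~\ref{lem-exact-syzygy} then identifies $\mathsf{l}(\Omega^t_{\mathscr{B}}(B))$ with a $t$-th syzygy $\Omega^t_{\mathscr{A}}(\mathsf{l}(B))$, namely the one computed from the projective resolution $\mathsf{l}(P_\bullet)\to \mathsf{l}(B)$ induced from any projective resolution $P_\bullet\to B$ in $\mathscr{B}$.

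For the base case $m=0$, I would establish a short exact sequence $0\to \Omega^t_{\mathscr{A}}(K)\to \mathsf{l}(V_0)\to \Omega^t_{\mathscr{A}}(A)\to 0$ by a horseshoe-type argument applied to $0\to K\to \mathsf{l}(B)\to A\to 0$: the key point is that $\mathsf{l}(P_\bullet)$ can be taken as the middle projective resolution, so that the middle syzygy is exactly $\mathsf{l}(V_0)=\mathsf{l}(\Omega^t_{\mathscr{B}}(B))$, and the ambiguity (up to projective summands) in the definitions of $\Omega^t_{\mathscr{A}}(K)$ and $\Omega^t_{\mathscr{A}}(A)$ absorbs the Schanuel complement that otherwise appears.

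For the inductive step, decompose the given exact sequence into short exact sequences $0\to W_{k+1}\to V_k\to W_k\to 0$ ($1\le k\le m$) with $W_0=\Omega^t_{\mathscr{B}}(B)$ and $W_m=V_m$, and apply $\mathsf{l}$. Splice the base-case SES with the surjection $\mathsf{l}(V_0)\to \mathsf{l}(W_0)$ via the composite $\mathsf{l}(V_0)\to \mathsf{l}(W_0)\to \Omega^t_{\mathscr{A}}(A)$: the kernel of this composite fits in an extension $0\to \mathsf{l}(W_1)\to \ker\to \Omega^t_{\mathscr{A}}(K)\to 0$, and one application of Lemma~\ref{IT-lem1} replaces it by $0\to \Omega^{t+1}_{\mathscr{A}}(K)\to \mathsf{l}(W_1)\oplus P_0\to \ker\to 0$, yielding, after splicing, a length-two exact sequence ending in $\Omega^t_{\mathscr{A}}(A)$. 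Iterating: at the $k$-th step cover the current leftmost term $\mathsf{l}(W_k)\oplus P_{k-1}$ by $\mathsf{l}(V_k)\oplus P_{k-1}$ (whose kernel is $\mathsf{l}(W_{k+1})$), form the induced extension $0\to \mathsf{l}(W_{k+1})\to \ker_k\to \Omega^{t+k}_{\mathscr{A}}(K)\to 0$, and apply Lemma~\ref{IT-lem1} again to produce $\Omega^{t+k+1}_{\mathscr{A}}(K)$ together with a new projective $P_k$. After $m$ such iterations, the identification $W_m=V_m$ produces the stated exact sequence.

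The main obstacle is the base case: producing a short exact sequence with $\mathsf{l}(V_0)$ appearing in the middle without any extra projective summand. This requires the coordinated choice of projective resolutions of $K$, $\mathsf{l}(B)$ and $A$ described above, and it is essential that $\mathsf{l}(P_\bullet)$ itself is a projective resolution of $\mathsf{l}(B)$—so the non-canonical Schanuel data can be shifted into the syzygy choices for $K$ and $A$ rather than piling onto the middle term. Once this clean base SES is secured, the induction reduces to the mechanical splicing outlined above, and the counting of projectives $P_0,\dots,P_{m-1}$ matches the statement because each application of Lemma~\ref{IT-lem1} contributes exactly one new summand.
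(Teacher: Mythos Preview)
Your proposal is correct and follows essentially the same route as the paper: both arguments use the horseshoe lemma on $0\to K\to\mathsf{l}(B)\to A\to 0$ (with the middle resolution taken to be $\mathsf{l}$ of a resolution of $B$, so that the middle $t$-th syzygy is $\mathsf{l}(\Omega^t_{\mathscr{B}}(B))$), then repeatedly form pullbacks along the surjections $\mathsf{l}(V_k)\oplus P_{k-1}\twoheadrightarrow \mathsf{l}(W_k)\oplus P_{k-1}$ and apply Lemma~\ref{IT-lem1} at each stage. The only cosmetic differences are that the paper presents this as a direct iterative construction rather than an induction (your ``inductive step'' never actually invokes the hypothesis for $m-1$), and your index range for the short exact sequences should read $0\le k\le m-1$ rather than $1\le k\le m$.
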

\begin{proof}
	We denote $\ke d_{i}$ by $K_{i}$ for $0\leq i\leq m$.
	Consider the exact sequence
	\begin{equation}
		\label{se.3.1}
		0\ra K_{0}\ra V_{0}\ra B_{t}\ra 0.
	\end{equation}
Since $\mathsf{l}$ is exact and preserves projective objects by \cite[Lemma 2.2]{GP1}, it follows from Lemma \ref{lem-exact-syzygy} that $\mathsf{l}(B_{t})$ is a $t$-syzygy object of $\mathsf{l}(B)$.
	Applying $\mathsf{l}$ to exact sequence \eqref{se.3.1}, we get an exact sequence
	$$0\ra \mathsf{l}(K_{0})\ra \mathsf{l}(V_{0})\ra \mathsf{l}(B_{t})\ra 0.$$
	On the other hand, using the horseshoe lemma to the exact sequence
	$$0\ra X\ra  \mathsf{l}(B)\ra A\ra 0,$$ we obtain the following exact sequence
	$$0\ra X_{t}\ra \mathsf{l}(B)_{t}\ra A_{t}\ra 0,$$
where $X_{t}, \mathsf{l}(B)_{t}$ and $A_{t}$ are corresponding $t$-syzygy objects for $X,\mathsf{l}(B)$ and $A$, respectively.
We mention that both $\mathsf{l}(B_{t})$ and $\mathsf{l}(B)_{t}$ are $t$-syzygy objects of $\mathsf{l}(B)$.
It follows from Schanuel's Lemma that $$\mathsf{l}(B_{t})\oplus P_{0}\cong \mathsf{l}(B)_{t}\oplus Q$$ for some projective objects $P_{0}$ and $Q$ in $\mathscr{A}$.
Thus we have exact sequences
$$0\ra X_{t}\oplus Q\ra \mathsf{l}(B)_{t}\oplus Q\ra A_{t}\ra 0$$
and
$$0\ra \mathsf{l}(K_{0})\ra \mathsf{l}(V_{0})\oplus P\ra \mathsf{l}(B)_{t}\oplus Q\ra 0.$$
	Consider the following pullback diagram:
	$$\xymatrix{&0\ar[d]&0\ar[d]\\
		&\mathsf{l}(K_{0})\ar@{=}[r]\ar[d]&\mathsf{l}(K_{0})\ar[d]\\
		0\ar[r]&Z\ar[r]\ar[d]&\mathsf{l}(V_{0})\oplus P_{0}\ar[d]\ar[r]&A_{t}\ar@{=}[d] \ar[r]&0\\
		0\ar[r] &X_{t}\oplus Q\ar[d]\ar[r]&\mathsf{l}(B)_{t}\oplus Q\ar[d]\ar[r]\ar[d]&A_{t}\ar[r]&0 \\&0&0&}$$
Applying Lemma \ref{IT-lem1} to the first column in this diagram, we have the exact sequence
	$$0\ra X_{t+1}\ra \mathsf{l}(K_{0})\oplus P_{1}\ra Z\ra 0$$ with $P_{1}\in\mathsf{Proj}\mathscr{A}$ and $X_{t+1}$ being a $(t+1)$-syzygy object of $X.$
	Next consider the exact sequence
	\begin{equation}
		\label{se.3.2}
		0\ra K_{1} \ra V_{1}\ra K_{0}\ra 0.
	\end{equation}
	Since $\mathsf{l}$ is an exact functor by assumption, we obtain an exact sequence
	$$0\ra\mathsf{l}(K_{1})\ra \mathsf{l}(V_{1})\oplus P_{1}\ra \mathsf{l}(K_{0})\oplus P_{1}\ra 0.$$
	Now we consider the pullback diagram
	$$\xymatrix{&0\ar[d]&0\ar[d]\\
		&\mathsf{l}(K_{1})\ar@{=}[r]\ar[d]&\mathsf{l}(K_{1})\ar[d]\\
		0\ar[r]&M\ar[r]\ar[d]&\mathsf{l}(V_{1})\oplus P_{1}\ar[d]\ar[r]&Z\ar@{=}[d] \ar[r]&0\\
		0\ar[r] &X_{t+1}\ar[d]\ar[r]&\mathsf{l}(K_{0})\oplus P_{1}\ar[d]\ar[r]\ar[d]&Z\ar[r]&0 \\&0&0.&}$$
	Connecting the second rows in two diagrams together, one gets an exact sequence
	$$0\ra M\ra \mathsf{l}(V_{1})\oplus P_{1}\ra \mathsf{l}(V_{0})\oplus P_{0}\ra A_{t}\ra 0.$$
Continuing in this way, we construct the following exact sequences
	$$0\ra \mathsf{l}(K_{m})\ra Y\ra X_{t+m}\ra 0$$
	and
	$$0\ra Y\ra \mathsf{l}(V_{m})\oplus P_{m}\ra\mathsf{l}(V_{m-1})\oplus P_{m-1}\ra \cdots\ra\mathsf{l}(V_{1})\oplus P_{1}\ra\mathsf{l}(V_{0})\oplus P_{0}\ra A_{t}\ra 0$$
	with each $P_{i}$ projective for $0\leq i\leq m$ and $X_{t+m}$ being a $(t+m)$-syzygy object of X.
	But $K_{m}=0$, thus $Y\cong X_{t+m}.$
	It follows that we obtain the  exact sequence
	$$0\ra X_{t+m}\ra \mathsf{l}(V_{m})\oplus P_{m}\ra\cdots \ra \mathsf{l}(V_{1})\oplus P_{1}\ra \mathsf{l}(V_{0})\oplus P_{0}\ra A_{t}\ra 0,$$
	which completes the proof of the lemma.
\end{proof}

The following theorem gives an estimate of the Igusa-Todorov distance of $\mathscr{A}$ in terms of that of $\mathscr{B}.$
\begin{theorem} \label{thm:IT}
	Let $(\mathscr{B},\mathscr{A},\mathsf{i},\mathsf{e},\mathsf{l})$ be a cleft extension of abelian categories such that $\mathsf{l}$ is exact and $\mathsf{e}$ preserves projectives. If the associated endofunctor $\mathsf{F}$ is nilpotent and $\mathsf{Proj}\mathscr{B}=\add (P)$ for some projective object $P$ in $\mathscr{B}$, then
	$$\IT (\mathscr{B})\leq \IT (\mathscr{A})\leq n(\IT (\mathscr{B})+1)-1,$$
	where $n$ is a positive integer such that $\mathsf{F}^{n}=0$.
\end{theorem}
\begin{proof}
	We first show $\IT (\mathscr{A})\leq n(\IT (\mathscr{B})+1)-1$.
	If $\IT (\mathscr{B})=\infty,$ the inequality holds trivially.
	We assume that $\IT (\mathscr{B})=m$.
	Take an object $X$ in $\mathscr{A}.$
	By Lemma \ref{lem of cleft extension}(2),
	one has the following exact sequences in $\mathscr{A}:$
	\begin{equation}\label{IT.se.1}
		\begin{cases}
			0\ra \mathsf{G}(X)\ra \mathsf{le}(X)\ra X\ra 0;\\
			0\ra \mathsf{G}^{2}(X)\ra \mathsf{l}\mathsf{F}\mathsf{e}(X)\ra \mathsf{G}(X)\ra 0;\\
			\;\;\;\;\;\;\;\;\;\;\;\;\;\;\vdots\\
			$$0\ra \mathsf{G}^{n-1}(X)\ra \mathsf{l}\mathsf{F}^{n-2}\mathsf{e}(X)\ra \mathsf{G}^{n-2}(X)\ra 0;\\
			0\ra \mathsf{G}^{n}(X)\ra \mathsf{l}\mathsf{F}^{n-1}\mathsf{e}(X)\ra \mathsf{G}^{n-1}(X)\ra 0.\\
		\end{cases}
	\end{equation}
Since $\mathsf{F}^{n}=0,$ we have $\mathsf{G}^{n}=0$ by Lemma \ref{lem of cleft extension}(1).
It implies that $\mathsf{G}^{n-1}(X)\cong \mathsf{l}\mathsf{F}^{n-1}\mathsf{e}(X)$.
We mention that $\IT(\mathscr{B})=m$ by hypothesis.
Therefore there exist a nonnegative integer $t$ and an object $V\in \mathscr{B}$ such that the following sequence
$$0\ra V_{m}\ra\cdots\ra V_{1}\ra V_{0}\ra B\ra 0$$
 is exact, where each $V_{i}\in \add (V)$ for every $B\in \Omega^{t}(\mathscr{B})$.
	Applying Lemma \ref{IT-lem2} to exact sequences in \eqref{IT.se.1}, we obtain the following exact sequences
	\begin{equation}
		\label{IT.1}
		0\ra \mathsf{G}(X)_{t+m}\ra \mathsf{l}(V^{1}_{m})\oplus P^{1}_{m}\ra \cdots\ra \mathsf{l}(V^{1}_{1})\oplus P^{1}_{1}\ra \mathsf{l}(V^{1}_{0})\oplus P_{0}^{1} \ra X_{t}\ra 0;
	\end{equation}
	\begin{equation}
		\label{IT.2}
		0\ra \mathsf{G}(X)_{t+2m}\ra \mathsf{l}(V^{2}_{m})\oplus P^{2}_{m}\ra \cdots\ra \mathsf{l}(V^{2}_{1})\oplus P^{2}_{1}\ra \mathsf{l}(V^{2}_{0})\oplus P_{0}^{2} \ra \mathsf{G}(X)_{t+m}\ra 0;
	\end{equation}
	$$\vdots$$
	\begin{multline}
		\label{IT.3}
		0\ra \mathsf{l}\mathsf{F}^{n-1}\mathsf{e}(X)_{t+m(n-1)}\ra \mathsf{l}(V^{n-1}_{m})\oplus P^{n-1}_{m}\ra \cdots\ra \mathsf{l}(V^{n-1}_{1})\oplus P^{n-1}_{1}\\
		\ra \mathsf{l}(V^{n-1}_{0})\oplus P_{0}^{n-1}
		\ra \mathsf{G}^{n-2}(X)_{t+m(n-2)}\ra 0
	\end{multline}
 where each $P^{j}_{i}$ belongs to $\mathsf{Proj}\mathscr{A}$ and each $\mathsf{G}(X)_{t+mj}$ is a $(t+mj)$-syzygy object of $\mathsf{G}(X)$ for $0\leq i\leq m, 1\leq j\leq n-1$, $X_{t}$ is a $t$-syzygy object of $X$ and $\mathsf{l}\mathsf{F}^{n-1}\mathsf{e}(X)_{t+m(n-1)}$ is a $t+m(n-1)$-syzygy object of $\mathsf{l}\mathsf{F}^{n-1}\mathsf{e}(X).$
Let $\mathsf{F}^{n-1}\mathsf{e}(X)_{t+m(n-1)}$ be a $t+m(n-1)$-syzygy object of $\mathsf{F}^{n-1}\mathsf{e}(X).$
Then it can be viewed as a $t$-syzygy object in $\mathscr{B}.$
	Since $\IT (\mathscr{B})=m,$  there is an exact sequence
	\begin{equation}
		\label{se.IT}
		0\ra W_{m}\ra \cdots\ra W_{1}\ra W_{0}\ra \mathsf{F}^{n-1}\mathsf{e}(X)_{t+m(n-1)}\ra 0
	\end{equation}
	with $W_{i}\in \add (V)$ for $0\leq i\leq m$.
	Since $\mathsf{l}$ is exact and preserves projectives by \cite[Lemma 2.2]{GP1}, it follows from Lemma \ref{lem-exact-syzygy} that $\mathsf{l}(\mathsf{F}^{n-1}\mathsf{e}(X)_{t+m(n-1)})$ is a $t+m(n-1)$-syzygy object of $\mathsf{l}\mathsf{F}^{n-1}\mathsf{e}(X).$
By Schanuel's Lemma, we have $$\mathsf{l}\mathsf{F}^{n-1}\mathsf{e}(X)_{t+m(n-1)}\oplus Q_{1}\cong \mathsf{l}(\mathsf{F}^{n-1}\mathsf{e}(X)_{t+m(n-1)})\oplus Q_{2}$$ for some projective objects $Q_{1}$ and $Q_{2}$ in $\mathscr{A}$.
Therefore, applying $\mathsf{l}$ to exact sequence \eqref{se.IT}, we get the following exact sequence
	\begin{equation}
		\label{IT.4}
		0\ra \mathsf{l}(W_{m})\ra\cdots \ra \mathsf{l}(W_{1})\ra\mathsf{l}(W_{0})\oplus Q_{2}\ra \mathsf{l}\mathsf{F}^{n-1}\mathsf{e}(X)_{t+m(n-1)}\oplus Q_{1}\ra 0.
	\end{equation}
	Connecting exact sequences \eqref{IT.1}, \eqref{IT.2}, \eqref{IT.3} and \eqref{IT.4}  together, we have the following exact sequence
	\begin{multline}
\label{IT.5}
		0\ra \mathsf{l}(W_{m})\ra \cdots\ra \mathsf{l}(W_{0})\oplus Q_{2}\ra \mathsf{l}(V^{n-1}_{m})\oplus P^{n-1}_{m}\oplus Q_{1}\ra \cdots\ra \mathsf{l}(V^{n-1}_{0})\oplus P_{0}^{n+1}\longrightarrow \cdots \\
		\longrightarrow \mathsf{l}(V^{1}_{m})\oplus P^{1}_{m}\ra \cdots\ra \mathsf{l}(V^{1}_{0})\oplus P_{0}^{1}\ra X_{t}\ra 0.
	\end{multline}
Since $\mathsf{Proj}\mathscr{B}=\mathrm{add}(P)$ by assumption, it follows from \cite[Lemma 2.3]{Kostas} that each $P_{i}^{j},Q_{1}$ and $Q_{2}$ belong to $\mathrm{add}(\mathsf{l}(P))$ for $0\leq i\leq m, 1\leq j\leq n-1.$	
Hence each term in \eqref{IT.5} belongs to  $\add(\mathsf{l}(V\oplus P))$.
By Definition \ref{def:IT distance}, we get
	$$\IT (\mathscr{A})\leq n(m+1)-1= n(\IT (\mathscr{B})+1)-1.$$
	
On the other hand, suppose that $\IT (\mathscr{A})=k.$
Then there exist a nonnegative integer $s$ and an object $W\in \mathscr{A}$ such that the following sequence
\begin{equation}\label{IT.6}
0\ra W_{k}\ra \cdots \ra W_{1}\ra W_{0}\ra \mathsf{i}(Y)_{s}\ra 0
\end{equation}
is exact for any $Y\in \mathscr{B}$ and  any $s$-syzygy object $\mathsf{i}(Y)_{s}$ of $\mathsf{i}(Y)$,	where each $W_{i}$ belongs to $\add (W)$.
	Since $\mathsf{e}$ is exact and preserves projective objects by assumption, it follows from Lemma \ref{lem-exact-syzygy} that $\mathsf{e}(\mathsf{i}(Y)_{s})$ is a $s$-syzygy object of $\mathsf{ei}(Y)$.
Note that $\mathsf{ei}(Y)\cong Y$.
Then for any $s$-syzygy object $Y_{s}$ of $Y$, by Schanuel's Lemma we have
$\mathsf{e}(\mathsf{i}(Y)_{s})\oplus P_{1}\cong Y_{s}\oplus P_{2}$ with $P_{1}$ and $P_{2}$ projective in $\mathscr{B}$.
Therefore, applying $\sf{e}$ to \eqref{IT.6}, we obtain the following exact sequence
$$0\ra \mathsf{e}(W_{k})\ra \cdots \ra \mathsf{e}(W_{1})\ra \mathsf{e}(W_{0})\oplus P_{1}\ra Y_{s}\oplus P_{2}\ra 0$$
with $\mathsf{e}(W_{k})\in \mathrm{add}(\mathsf{e}(W))$ for each $0\leq i\leq k$.
We denote by $K=\ker(\mathsf{e}(W_{0})\oplus P_{1}\ra Y_{s}\oplus P_{2})$.
Thus we obtain exact sequences
\begin{equation*}
0\ra K\ra \mathsf{e}(W_{0})\oplus P_{1}\ra Y_{s}\oplus P_{2}\ra 0
\end{equation*}
and
\begin{equation}\label{IT.7}
0\ra \mathsf{e}(W_{k})\ra \cdots \ra \mathsf{e}(W_{1})\ra K\ra 0.
\end{equation}
Consider the following pullback diagram:
$$\xymatrix{&&0\ar[d]&0\ar[d]\\
		0\ar[r]&K\ar@{=}[d]\ar[r]&X\ar[d]\ar[r]&Y_{s}\ar[d]\ar[r]&0\\
		0\ar[r]&K\ar[r]&\mathsf{e}(W_{0})\oplus P_{1}\ar[d]\ar[r]&Y_{s}\oplus P_{2}\ar[d] \ar[r]&0\\
		 &&P_{2}\ar[d]\ar@{=}[r]&P_{2}\ar[d]&& \\&&0&0}$$
From the left column,  we obtain an exact sequence
\begin{equation}\label{IT.8}
0\ra K\ra X\ra Y_{s}\ra 0
\end{equation}
  where $X$ is a direct summand of $\mathsf{e}(W_{0})\oplus P_{1}$ as $P_{2}$ is projective.
 Splicing exact sequences \eqref{IT.7} with \eqref{IT.8}, we get the following exact sequence
\begin{equation}\label{IT.9}
 0\ra \mathsf{e}(W_{k})\ra \cdots \ra \mathsf{e}(W_{1})\ra X\ra Y_{s}\ra 0.
 \end{equation}
Since $\mathsf{Proj}\mathscr{B}=\mathrm{add}(P)$ by assumption, and each $\mathsf{e}(W_{i})\in \mathsf{e}(W)$, it follows that each term in \eqref{IT.9} belongs to $\mathrm{add}(\mathsf{e}(W)\oplus P).$ Consequently,
 $\IT (\mathscr{B})\leq \IT(\mathscr{A}).$
\end{proof}


\section{Extension dimensions}

Let $\Lambda$ be a Noetherian algebra. Note that the category of finitely generated right $\Lambda$-modules, denoted by $\modL$, does not have enough injectives.
Therefore, to discuss the extension dimension of a cleft extension of a Noetherian algebra,
we study the relationship between the extension dimension of abelian categories involved in a cleft coextension of abelian categories, rather than a cleft extension of abelian categories. First we recall the definition of extension dimensions from \cite{Be,ZMH}.

Let $\mathcal{U}_1,\mathcal{U}_2,\cdots,\mathcal{U}_n$ be subcategories of $\mathscr{A}$. Define
$$\mathcal{U}_{1}~\bullet ~\mathcal{U}_{2}:={\add}\{A\in \mathscr{A}\mid {\rm there \;exists \;an\; sequence \;}
0\rightarrow U_1\rightarrow  A \rightarrow U_2\rightarrow 0\ {\rm in}\ \mathscr{A}\ {\rm with}\; U_1 \in \mathcal{U}_1 \;{\rm and}\;
U_2 \in \mathcal{U}_2\}.$$
By \cite[Proposition 2.2]{DT}, the operator $\bullet$ is associative, that is,
$(\mathcal{U}_{1}\bullet\mathcal{U}_{2})\bullet\mathcal{U}_{3} =\mathcal{U}_{1}\bullet(\mathcal{U}_{2}\bullet\mathcal{U}_{3}).$
The category $\mathcal{U}_{1}\bullet  \mathcal{U}_{2}\bullet \dots \bullet\mathcal{U}_{n}$
can be inductively described as follows
\begin{align*}
	\mathcal{U}_{1}\bullet  \mathcal{U}_{2}\bullet \dots \bullet\mathcal{U}_{n}:=
	\add \{A\in \mathscr{A}\mid {\rm there \;exists \;an\; sequence}\
	0\rightarrow U\rightarrow  A \rightarrow V\rightarrow 0 \\{\rm in}\ \mathscr{A}\ {\rm with}\; U \in \mathcal{U}_{1} \;{\rm and}\;
	V \in  \mathcal{U}_{2}\bullet \dots \bullet\mathcal{U}_{n}\}.
\end{align*}

For a subclass $\mathcal{U}$ of $\mathscr{A}$, set
$[\mathcal{U}]_{0}=0$, $[\mathcal{U}]_{1}=\add(\mathcal{U})$,
$[\mathcal{U}]_{n}=[\mathcal{U}]_1\bullet [\mathcal{U}]_{n-1}$ for any $n\geq 2$.
If $T$ is an object in $\mathscr{A}$ we write $[ T]_{n}$ instead of $[ \{T \}]_{n}$.

\begin{definition}\label{extension-dimension}
	(\cite{Be})
	The {\bf extension dimension} of $\mathscr{A}$ is defined as
	$$\dim\mathscr{A}:=\inf\{n\geq 0\mid\mathscr{A}= [ T]_{n+1}\ {\rm with}\ T\in\mathscr{A}\},$$
	or $\infty$ if no such an integer exists.
\end{definition}
 Let $\Lambda$ be a Noetherian algebra $\Lambda$. Then the extension dimension of $\modL$ is denoted by $\dim(\Lambda)$.
\begin{remark}
Let $\Lambda$ be an Artin algebra. By \cite[Example 1.6]{Be},  $\Lambda$ is representation-finite if and only if $\dim(\Lambda)=0$.
\end{remark}
\begin{example}{\rm
		Let $\widetilde{\Lambda}_{l,n}=kQ/I$ with $Q$ the quiver
		$$\xymatrix{
			{1}\ar@/^1pc/[r]^{x_{1}}_{\vdots}\ar@/_1pc/[r]_{x_{n}}&2\ar@/^1pc/[r]^{x_{1}}_{\vdots}\ar@/_1pc/[r]_{x_{n}}&3&\cdots
			&l-1\ar@/^1pc/[r]^{x_{1}}_{\vdots}\ar@/_1pc/[r]_{x_{n}}&l
		},$$
and	$$I=(x_{n^{''}}x_{n^{'}}+x_{n^{'}}x_{n^{''}},x^{2}_{n^{'}}\;|1\leq n',n''\leq n).$$
		We know from \cite[Example 2.9]{Zheng-zhang2024} that $\dim (\widetilde{\Lambda}_{l,n})=\min\{l-1,n-1\}.$
		Hence the extension dimension can be arbitrarily large.
	}
\end{example}
We collect the following results which we will need in the proof of Theorem \ref{main thm}.
\begin{lemma}$\mathrm{(}$\cite[Corollary 2.3]{ZMH}$\mathrm{)}$\label{ex-lem}
	For any $T_{1},T_{2}\in \mathscr{A}$ and $m,n\geq 1$, we have
	$[ T_{1}]_{m}\bullet [ T_{2}]_{n}\subseteq [ T_{1}\oplus T_{2}]_{m+n}$.
\end{lemma}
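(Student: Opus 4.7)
The plan is to reduce the statement to the monotonicity of the construction $[-]_k$ under enlargement of the generator, together with the additivity property $[\mathcal{U}]_m\bullet[\mathcal{U}]_n=[\mathcal{U}]_{m+n}$ which follows formally from the recursion and associativity of $\bullet$.

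First I would establish the following monotonicity: for any objects $X,Y\in\mathscr{A}$ and every $k\geq 1$, one has $[X]_k\subseteq[X\oplus Y]_k$. The base case $k=1$ is immediate from $\add(X)\subseteq\add(X\oplus Y)$. For the inductive step, using the defining recursion $[X]_{k+1}=[X]_1\bullet[X]_k$, the inductive hypothesis and the evident monotonicity of the operator $\bullet$ in both arguments (i.e.\ $\mathcal{U}_1\bullet\mathcal{U}_2\subseteq\mathcal{V}_1\bullet\mathcal{V}_2$ whenever $\mathcal{U}_i\subseteq\mathcal{V}_i$, which is immediate from the definition) yields $[X]_{k+1}\subseteq[X\oplus Y]_1\bullet[X\oplus Y]_k=[X\oplus Y]_{k+1}$. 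Applying this twice, I obtain $[T_1]_m\subseteq[T_1\oplus T_2]_m$ and $[T_2]_n\subseteq[T_1\oplus T_2]_n$.

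Next I would prove that for any subclass $\mathcal{U}\subseteq\mathscr{A}$ and $m,n\geq 1$ one has $[\mathcal{U}]_m\bullet[\mathcal{U}]_n=[\mathcal{U}]_{m+n}$. Unwinding the recursion gives $[\mathcal{U}]_k=\underbrace{[\mathcal{U}]_1\bullet\cdots\bullet[\mathcal{U}]_1}_{k\text{ factors}}$, where the parenthesization is immaterial because $\bullet$ is associative (quoted from \cite[Proposition 2.2]{DT}). Therefore $[\mathcal{U}]_m\bullet[\mathcal{U}]_n$ is an $(m+n)$-fold $\bullet$-product of copies of $[\mathcal{U}]_1$, which equals $[\mathcal{U}]_{m+n}$.

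Combining the two steps with $\mathcal{U}=\{T_1\oplus T_2\}$, I conclude
$$[T_1]_m\bullet[T_2]_n\;\subseteq\;[T_1\oplus T_2]_m\bullet[T_1\oplus T_2]_n\;=\;[T_1\oplus T_2]_{m+n},$$
which is the desired inclusion. There is no serious obstacle here; the only point that requires a moment of care is recording the monotonicity of $\bullet$ in each slot, so that the inductive step in the first claim is truly formal rather than relying on any homological input.
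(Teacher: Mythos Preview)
Your argument is correct. The paper does not supply its own proof of this lemma; it simply cites \cite[Corollary~2.3]{ZMH}. Your two-step reduction---first the monotonicity $[X]_k\subseteq[X\oplus Y]_k$ via induction and the evident monotonicity of $\bullet$ in each slot, then the identity $[\mathcal{U}]_m\bullet[\mathcal{U}]_n=[\mathcal{U}]_{m+n}$ via associativity of $\bullet$---is the standard route to this inclusion and is entirely sound. There is nothing to compare against in the present paper, and your self-contained argument would serve as a replacement for the citation.
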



\begin{lemma}{\rm (\cite[Lemma 5.8]{DT})}\label{main lem0}
	If $\mathcal{A}$ has enough projective objects and
	$$0\longrightarrow M\longrightarrow X^{0} \longrightarrow  X^{1}\longrightarrow\cdots\longrightarrow
	X^{n} \longrightarrow 0,$$
	is an exact sequence in $\mathcal{A}$ with $n\geqslant 0$, then
	$$M\in[(X^{n})_{n}]_{1}\bullet[(X^{n-1})_{n-1}]_{1}\bullet\cdots\bullet
	[(X^{1})_{1}]_{1}\bullet[X^{0}]_{1}\subseteq[(\oplus_{i=1}^{n}(X^{i})_{i})\oplus X^{0}]_{n+1},$$
where each $(X^{i})_{i}$ is any $i$-syzygy object of $X^{i}$ for $1\leq i\leq n.$
\end{lemma}

\begin{lemma}{\rm (\cite[Lemma 2.4]{ZMH})}\label{main lem1}
	Let $F:\mathcal{A}\rightarrow \mathcal{B}$ be an exact functor of abelian categories.
	Then $F([T]_{n})\subseteq [F(T)]_{n}$ for any $T\in \mathcal{A}$ and $n\geq 1$.
\end{lemma}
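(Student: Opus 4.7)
The plan is a straightforward induction on $n \geq 1$, exploiting the recursive definition $[T]_n = [T]_1 \bullet [T]_{n-1}$, the exactness (and hence additivity) of $F$, and the fact that each $[\cdot]_n$ is built using $\add$ and is therefore closed under direct summands.

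For the base case $n=1$, I would observe that $[T]_1 = \add(T)$ consists exactly of the direct summands of finite direct sums $T^k$. Applying the additive functor $F$ sends $T^k$ to $F(T)^k$ and preserves direct summands, so $F(X) \in \add(F(T)) = [F(T)]_1$ for every $X \in [T]_1$.

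For the inductive step, assume $F([T]_{n-1}) \subseteq [F(T)]_{n-1}$ and take any $A \in [T]_n = [T]_1 \bullet [T]_{n-1}$. By the definition of $\bullet$, the object $A$ is a direct summand of some $A'$ fitting into a short exact sequence $0 \to U \to A' \to V \to 0$ with $U \in [T]_1$ and $V \in [T]_{n-1}$. Since $F$ is exact, applying it yields a short exact sequence $0 \to F(U) \to F(A') \to F(V) \to 0$ in $\mathscr{B}$. The base case gives $F(U) \in [F(T)]_1$, and the inductive hypothesis gives $F(V) \in [F(T)]_{n-1}$. Hence $F(A') \in [F(T)]_1 \bullet [F(T)]_{n-1} = [F(T)]_n$, and since $A$ is a direct summand of $A'$ and $[F(T)]_n$ is closed under direct summands, we conclude $F(A) \in [F(T)]_n$.

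There is no genuine obstacle in this argument; the only point requiring care is bookkeeping of the $\add$-closure built into the operator $\bullet$. That closure is precisely what lets us pass between "direct summand of an extension" on the $\mathcal{A}$-side and the analogous statement on the $\mathcal{B}$-side after applying $F$, since exactness transports the extension and additivity transports the direct-summand condition simultaneously.
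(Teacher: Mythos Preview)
Your argument is correct. The paper does not supply its own proof of this lemma; it simply cites \cite[Lemma 2.4]{ZMH}, and your induction on $n$ using exactness of $F$ together with the $\add$-closure built into the operator $\bullet$ is the standard (and essentially only) way to verify the statement.
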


\begin{lemma}\label{ex.lem2}
Let $\mathscr{A}$  be an abelian category with $\mathsf{Proj}\mathscr{A}=\mathrm{add}(P)$ for some projective object $P.$
 Assume that $X$ and $Y$ are in $\mathrm{add}(T)$ for some object $T\in \mathscr{A}$.
 If $X_{t}$ and $Y_{t}$ are $t$-syzygy objects of $X$ and $Y$ respectively for some integer $t$, then there exists a $t$-syzygy object $T_{t}$ of $T$ such that $X_{t}\oplus Y_{t}$ belongs to $\mathrm{add}(T_{t}\oplus P)$.
 \end{lemma}
 \begin{proof}
 Since $X\in \mathrm{add}(T)$ by assumption, there exists an object $Z$ with $X\oplus Z\cong T^{(k)}$ for some integer $k.$
 Then we respectively have $t$-syzygy objects $Z_{t}, (X\oplus Z)_{t}, (T^{(k)})_{t}$ for $Z, X\oplus Z$ and $T^{(k)}$ such that
 $X_{t}\oplus Z_{t}\cong (X\oplus Z)_{t}\cong (T^{(k)})_{t}$.
 Let $T_{t}$ be a $t$-syzygy object of $T$.
 Then $T_{t}^{(k)}$ is also a $t$-syzygy object of $T^{(k)}$.
 By Schanuel's Lemma, one has $(T^{(k)})_{t}\oplus Q'\cong T_{t}^{(k)}\oplus Q$ for some projective objects $Q$ and $Q'.$
 It follows that $X_{t}\oplus Z_{t}\oplus Q'\cong T_{t}^{(k)}\oplus Q.$
 Hence $X_{t}\in \mathrm{add}(T_{t}\oplus P)$ as $\mathsf{Proj}\mathscr{A}=\mathrm{add}(P)$.
 Similarly, we can show that $Y_{t}\in \mathrm{add}(T_{t}\oplus P).$
 Thus we get that $X_{t}\oplus Y_{t}\in \mathrm{add}(T_{t}\oplus P).$
 \end{proof}

\begin{lemma}\label{ex.lem3}
Let $\mathscr{A}$  be an abelian category with $X$ and $Y$ belonging to $\mathscr{A}.$
If $X\in \mathrm{add}(Y)$, then $[X]_{n}\subseteq[Y]_{n}$.
\end{lemma}
\begin{proof}
We proceed by induction on $n$.
Let $n=1$ and let $K\in [X]_{1}=\mathrm{add}(X)$.
Then there exists an object $L$ such that $K\oplus L\cong X^{(k)}$ for some integer $k.$
We mention that $X\in \mathrm{add}(Y)$.
Then one has an object $Z$ such that $X\oplus Z\cong Y^{(t)}$ for some integer $t.$
 Consequently, $X^{(k)}\oplus Z^{(k)}\cong Y^{(kt)}$ and so $K\oplus L\oplus Z^{(k)}\cong Y^{(kt)},$ which implies that $K\in \mathrm{add}(Y)=[Y]_{1}.$
Thus the case for $n=1$ is proved.
By induction hypothesis, we have $[X]_{n-1}\subseteq[Y]_{n-1}.$
Then
\begin{align*}
[X]_{n}&=[X]_{1}\bullet[X]_{n-1}
\subseteq[Y]_{1}\bullet[Y]_{n-1}
=[Y]_{n}.
\end{align*}
The proof is completed.
\end{proof}

\begin{lemma}\label{main lem2}
Let $\mathscr{A}$  be an abelian category with $\mathsf{Proj}\mathscr{A}=\mathrm{add}(P)$ for some projective object $P.$
Assume that $X$ and $Y$ are in $\mathscr{A}$ with $[X]_{1}\subseteq [Y]_{n}$ for $n\geq 1$. Then for any $m$-syzygy object $X_{m}$ of $X$, there exists an $m$-syzygy object $Y_{m}$ of $Y$ such that $ [X_{m}]_{1}\subseteq  [Y_{m}\oplus P]_{n}$ .
\begin{proof}
Let $W\in [X_{m}]_{1}$.
Then there is an object $W'$ with $W\oplus W'\cong X_{m}^{(l)}$ for some integer $l.$
Since $X^{(l)}\in [X]_{1}\subseteq[Y]_{n}$ by assumption, there exist exact sequences
\begin{equation*}
		\begin{cases}
			0\ra Y'_{1}\ra X^{(l)}\oplus Z_{1}\ra Y_{1}\ra 0;\\
			0\ra Y'_{2}\ra Y_{1}\oplus Z_{2}\ra Y_{2}\ra 0;\\
		\;\;\;\;\;\;\;\;\;\;\;\;\;\;\vdots\\
			0\ra Y'_{n-1}\ra Y_{n-2}\oplus Z_{n-1}\ra Y_{n-1}\ra 0,
		\end{cases}
	\end{equation*}
where $Z_{i}\in \mathscr{A},Y'_{i}\in [Y]_{1}$ and $Y_{i}\in[Y]_{n-i}$ for $1\leq i\leq n-1$ (with $Y_{0}=X^{(l)}$).
By horseshoe Lemma, we have
\begin{equation*}
		\begin{cases}
			0\ra (Y'_{1})_{m}\ra (X^{(l)}\oplus Z_{1})_{m}\ra (Y_{1})_{m}\ra 0;\\
			0\ra (Y'_{2})_{m}\ra (Y_{1}\oplus Z_{2})_{m}\ra (Y_{2})_{m}\ra 0;\\
		\;\;\;\;\;\;\;\;\;\;\;\;\;\;\vdots\\
			0\ra (Y'_{n-1})_{m}\ra (Y_{n-2}\oplus Z_{n-1})_{m}\ra (Y_{n-1})_{m}\ra 0,
		\end{cases}
	\end{equation*}
where $(Y'_{i})_{m},(Y_{i})_{m},(Y_{i-1}\oplus Z_{i})_{m}$ are corresponding $m$-syzygy objects for $Y'_{i},Y_{i}$ and $Y_{i-1}\oplus Z_{i}$ respectively.
Then by Schanuel's Lemma, we have the following isomorphism
$$(Y_{i-1}\oplus Z_{i})_{m}\oplus P_{i}\cong (Y_{i-1})_{m}\oplus (Z_{i})_{m}\oplus Q_{i},$$
where $P_{i}$ and $Q_{i}$ are projective,
and each $(Z_{i})_{m}$ is an $m$-syzygy object of $Z_{i}$ for $1\leq i\leq n-1$.
It follows that we have the following exact sequences
\begin{equation}\label{ex.se1}
		\begin{cases}
			0\ra (Y'_{1})_{m}\oplus P_{1}\ra (X^{(l)})_{m}\oplus (Z_{1})_{m}\oplus Q_{1}\ra (Y_{1})_{m}\ra 0;\\
			0\ra (Y'_{2})_{m}\oplus P_{2}\ra (Y_{1})_{m}\oplus (Z_{2})_{m}\oplus Q_{2}\ra (Y_{2})_{m}\ra 0;\\
		\;\;\;\;\;\;\;\;\;\;\;\;\;\;\vdots\\
			0\ra (Y'_{n-1})_{m}\oplus P_{n-1}\ra (Y_{n-2})_{m}\oplus (Z_{n-1})_{m}\oplus Q_{n-1}\ra (Y_{n-1})_{m}\ra 0.
		\end{cases}
	\end{equation}
We mention that $X_{m}^{(l)}\oplus Q\cong (X^{(l)})_{m}\oplus Q'$ for some projective objects $Q'$ and $Q.$
Then from the first exact sequence in \eqref{ex.se1}, we get the following exact sequence
\begin{equation}\label{ex.se2}
0\ra (Y'_{1})_{m}\oplus P_{1}\oplus Q'\ra X_{m}^{(l)}\oplus (Z_{1})_{m}\oplus Q_{1}\oplus Q\ra (Y_{1})_{m}\ra 0.
\end{equation}
Then by \eqref{ex.se1} and \eqref{ex.se2}, we have
\begin{align*}
X_{m}^{(l)}&\in [(Y'_{1})_{m}\oplus P_{1}\oplus Q']_{1}\bullet[(Y_{1})_{m}]_{1}\\
&\subseteq[(Y'_{1})_{m}\oplus P_{1}\oplus Q']_{1}\bullet[(Y'_{2})_{m}\oplus P_{2}]_{1}\bullet [(Y_{2})_{m}]_{1}\\
&\;\;\;\;\;\;\;\;\;\;\;\;\;\;\hdots\\
&\subseteq[(Y'_{1})_{m}\oplus P_{1}\oplus Q']_{1}\bullet[(Y'_{2})_{m}\oplus P_{2}]_{1}\bullet\cdots\bullet [(Y'_{n-1})_{m}\oplus P_{n-1}]_{1}\bullet [(Y_{n-1})_{m}]_{1}\\
&\subseteq[(\oplus_{i=1}^{n}(Y'_{i})_{m})\oplus (Y_{n-1})_{m}\oplus P]_{n} \;\;\;\;\;\;{\rm (by \;Lemma\; \ref{ex-lem} \;and\;}\mathsf{Proj}\mathscr{A}=\mathrm{add}(P)).
\end{align*}
By Lemma \ref{ex.lem2} one has $(\oplus_{i=1}^{n}(Y'_{i})_{m})\oplus (Y_{n-1})_{m}\oplus P\subseteq\mathrm{add}(Y_{m}\oplus P)$ for some $m$-syzygy object $Y_{m}$ of $Y$.
Then it follows from Lemma \ref{ex.lem3} that $[\oplus_{i=1}^{n}(Y'_{i})_{m}\oplus (Y_{n-1})_{m}\oplus P]_{n}\subseteq[Y_{m}\oplus P]_{n}$.
 Consequently, $X_{m}^{(l)}\in [Y_{m}\oplus P]_{n}$ and hence $W\in[Y_{m}\oplus P]_{n}.$ This completes the proof.
\end{proof}
	

	\begin{theorem}\label{main thm}
		Let $(\mathscr{B},\mathscr{A},\mathsf{i},\mathsf{e},\mathsf{r})$ be a cleft coextension of abelian categories with $\mathsf{Proj}\mathscr{A}=\mathrm{add}(P)$ for some projective object $P$ in $\mathscr{A}$. If the induced endofunctor $\mathsf{F}'$ is nilpotent and $\mathsf{r}$ is exact,
		then $$\dim \mathscr{B}\leq \dim \mathscr{A}\leq n(\dim \mathscr{B}+1)-1,$$
		where $n$ is a positive integer such that $\mathsf{F}'^{n}=0$.
	\end{theorem}
\end{lemma}
\begin{proof}
	We first show the right-hand side of the inequality and then the left-hand side of the inequality.
Take an arbitrary  $X\in\mathscr{A}$.
	Then by Lemma \ref{lem of cleft coextension}(2) we have  the following exact sequence in $\mathscr{A}$
	\begin{equation*}
		\begin{cases}
			0\ra X\ra \mathsf{re}(X)\ra \mathsf{G}'(X)\ra 0;\\
			0\ra \mathsf{G}'(X)\ra \mathsf{r}\mathsf{F}'\mathsf{e}(X)\ra \mathsf{G}'^{2}(X)\ra 0;\\
		\;\;\;\;\;\;\;\;\;\;\;\;\;\;\vdots\\
			0\ra \mathsf{G}'^{n-1}(X)\ra \mathsf{r}\mathsf{F}'^{n-1}\mathsf{e}(X)\ra \mathsf{G}'^{n}(X)\ra 0.
		\end{cases}
	\end{equation*}
	Since $\mathsf{F}'^{n}=0,$ we have $\mathsf{G}'^{n}=0$ by Lemma \ref{lem of cleft coextension}(1) and so $\mathsf{G}'^{n-1}(X)\cong \mathsf{r}\mathsf{F}'^{n-1}\mathsf{e}(X)$.
	Thus there is an exact sequence of $\mathscr{A}:$
	$$0\ra X\ra \mathsf{re}(X) \ra \mathsf{r}\mathsf{F}'\mathsf{e}(X)\ra \cdots\ra \mathsf{r}\mathsf{F}'^{n-1}\mathsf{e}(X)\ra  0.$$
	If $\dim \mathscr{B}=\infty$, the inequality holds trivially.
	Assume that $\dim \mathscr{B}=m<\infty$.
	Then $\mathscr{B}=[ T ]_{m+1}$ for some $T\in \mathscr{B}.$
	This implies that each $\mathsf{F}'^{i}\mathsf{e}(X)\in [T]_{m+1}$ for $1\leq i\leq n-1$.
	Since $\sf r$ is exact by assumption, we have
	$\mathsf{r}\mathsf{F}'^{i-1}\mathsf{e}(X)\in[ \mathsf{r}(T)]_{m+1}$ by Lemma \ref{main lem1}; moreover, $[\mathsf{r}\mathsf{F}'^{i-1}\mathsf{e}(X)]_{1}\subseteq [ \mathsf{r}(T)]_{m+1}$.
Then
	\begin{align*}
		X&\in [(\mathsf{r}\mathsf{F}'^{n-1}\mathsf{e}(X))_{n-1}]_{1} \bullet\cdots \bullet [(\mathsf{r}\mathsf{F}'^{}\mathsf{e}(X))_{1}]_{1}\bullet[\mathsf{r}\mathsf{e}(X)]_{1}\;\;\;\;\;\;{\rm (by \;Lemma\; \ref{main lem0})}\\
		&\subseteq [ (\mathsf{r}(T))_{n-1}\oplus P]_{m+1} \bullet\cdots \bullet [(\mathsf{r}(T))_{1}\oplus P]_{m+1}\bullet [ \mathsf{r}(T)\oplus P]_{m+1}\;\;\;\;\;\;{\rm (by \;Lemma\; \ref{main lem2})}\\
		&\subseteq[ \bigoplus_{i=0}^{n-1}(\mathsf{r}(T))_{i}\oplus P]_{n(m+1)},\;\;\;\;\;\;{\rm (by \;Lemma\; \ref{ex-lem})}
	\end{align*}
where each $(\mathsf{r}\mathsf{F}'^{i}\mathsf{e}(X))_{i}$ is an $i$-syzygy object of $\mathsf{r}\mathsf{F}'^{i}\mathsf{e}(X)$ and each $(\mathsf{r}(T))_{i}$ is an $i$-syzygy object of $\sf{r}(T)$ for $0\leq i\leq n-1$.
	Hence $\dim \mathscr{A}\leq n(\dim \mathscr{B}+1)-1$.
	
	On the other hand, we also may assume that $\dim \mathscr{A}=k<\infty.$
	Then $\mathscr{A}=[ S]_{k+1}$ for some $S\in \mathscr{A}.$
	It follows that $\mathsf{i}(B)\in [ S]_{k+1}$ for any $B\in \mathscr{B}.$
	Since $\mathsf{e}$ is exact, it follows from Lemma \ref{main lem1} that
	$$B\cong \mathsf{ei}(B)\in \mathsf{e}[ S]_{k+1}\subseteq[ \mathsf{e}(S)]_{k+1}.$$
	Thus we have $\dim \mathscr{B}\leq\dim \mathscr{A}.$ So we complete this proof.
\end{proof}



\section{Rouquier dimension}
In this section, we concentrate on  the behavior of  dimension of bounded derived categories involved in a cleft extension of module categories.
We begin by recalling the concept of dimension of a triangulated category introduced by Rouquier \cite{Rouquier}.

Let $\mathscr{T}$ be a triangulated category, and let $\mathcal{U}$ and $\mathcal{V}$ be two subcategories of $\mathscr{T}$.
We denote by $\mathcal{U}*\mathcal{V}$ the subcategory consisting of the objects $A$ for which there exists a triangle $$U\rightarrow  A \rightarrow V\rightarrow U[1]$$ with $U\in \mathcal{U}$ and $V\in \mathcal{V}.$
We denote by $\langle \mathcal{U}\rangle_{1}$ the smallest full subcategory of $\mathscr{T}$ which contains $\mathcal{U}$ and is closed under finite direct sums, direct summands and shifts.
Set $\mathcal{U}\diamond \mathcal{V}:=\langle \mathcal{U}* \mathcal{V}\rangle_{1}$ and define inductively
$$\langle \mathcal{U}\rangle_{0}:=0,~\langle \mathcal{U}\rangle_{n+1}=\langle \mathcal{U}\rangle_{1}\diamond\langle\mathcal{U}\rangle_{n}.$$

\begin{definition}(\cite{Rouquier})
	The {\bf dimension} of a triangulated category is defined to be
	$$\der\mathscr{T}:=\inf\{n\geq 0\mid\mathscr{T}= \langle T\rangle_{n+1}\ {\rm with}\ T\in\mathscr{T}\},$$
	or $\infty$ if no such an integer exists.
\end{definition}

\begin{example}(\cite[Example 3.4]{Zheng-huang2022})\label{exa-3.4}
	{\rm Let $\Lambda$ be the Beilinson algebra $kQ/I$ with $Q$ the quiver
		$$\xymatrix{
			&0 \ar@/_1pc/[r]_{x_{n}}\ar@/^1pc/[r]^{x_{0}}_{\vdots}
			&1\ar@/_1pc/[r]_{x_{n}}\ar@/^1pc/[r]^{x_{0}}_{\vdots}
			&2\ar@/_1pc/[r]_{x_{n}}\ar@/^1pc/[r]^{x_{0}}_{\vdots}
			&3&\cdots &n-1\ar@/_1pc/[r]_{x_{n}}\ar@/^1pc/[r]^{x_{0}}_{\vdots}&n
		}$$
		and
		$I=(x_{i}x_{j}-x_{j}x_{i})$, where $0 \leq i, j \leq n$ (see \cite[Example 3.7]{oppermann2010representation}). We know from \cite[Example 3.4]{Zheng-huang2022} or \cite{KrK} that
		$$\der {\mathbf{D}^{b}(\modL)}=n.$$}
\end{example}
The following results are well known.

\begin{theorem} \label{LL-and-gldim}
	Let $\Lambda$ be an Artin algebra. Then the following statements hold:
	\begin{enumerate}
		\item[$(1)$] {\rm (\cite[Proposition 7.37]{Rouquier})} $\der{\mathbf{D}^{b}(\modL)} \leqslant {\rm LL}(\Lambda)-1,$ where ${\rm LL}(\Lambda)$ is the Loewy length of $\Lambda;$
		\item[$(2)$] {\rm (\cite[Proposition 7.4]{Rouquier}, \cite[Proposition 2.6]{KrK})} $\der {\mathbf{D}^{b}(\modL)} \leqslant {\rm gl.dim} \Lambda.$
	\end{enumerate}
\end{theorem}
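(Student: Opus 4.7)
The plan is to prove the two inequalities separately by introducing two classical filtrations on objects of $\mathbf{D}^{b}(\modL)$. In each case the goal is to exhibit any object as an iterated extension of a single candidate generator $T$ in a number of steps governed by the asserted bound.

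For part $(1)$, I set $\ell = \mathrm{LL}(\Lambda)$, $J = \mathrm{rad}(\Lambda)$ so that $J^{\ell}=0$, and take $T = S := \Lambda/J$. Given $X\in\mathbf{D}^{b}(\modL)$ I pick a genuine bounded complex representative and form the descending filtration of subcomplexes
$$X \supseteq JX \supseteq J^{2}X \supseteq \cdots \supseteq J^{\ell}X = 0.$$
Each subquotient $J^{i}X/J^{i+1}X$ is a bounded complex of modules over the semisimple ring $\Lambda/J$, hence quasi-isomorphic to its cohomology, which is a finite direct sum of shifted copies of $S$ and therefore lies in $\langle S\rangle_{1}$. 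The $\ell$ short exact sequences
$$0\to J^{i+1}X\to J^{i}X\to J^{i}X/J^{i+1}X\to 0$$
yield $\ell$ triangles in $\mathbf{D}^{b}(\modL)$; splicing them inductively gives $X\in\langle S\rangle_{\ell}$, hence $\der\mathbf{D}^{b}(\modL)\leq \ell-1$.

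For part $(2)$, I set $d = \mathrm{gl.dim}\,\Lambda$ and take $T=\Lambda$. Finiteness of $d$ gives $\mathbf{D}^{b}(\modL)\simeq K^{b}(\mathrm{proj}\text{-}\Lambda)$, so every object is represented by a bounded complex of finitely generated projectives. The sharpest route is the converse Ghost Lemma of Kelly/Rouquier: it suffices to check that every $(d+1)$-fold composition of $\Lambda$-ghost maps in $\mathbf{D}^{b}(\modL)$ is zero, where a $\Lambda$-ghost is a morphism inducing zero on all cohomology $H^{i}(-)\cong \Hom_{\mathbf{D}^{b}(\modL)}(\Lambda[-i],-)$. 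One verifies that a ghost out of an object factors through a triangle whose third vertex has one fewer Ext-layer against $\Lambda$, and after $d+1$ iterations one lands in a cone whose remaining Ext against $\Lambda$ vanishes because $\mathrm{gl.dim}\,\Lambda\leq d$; the final ghost is then forced to be zero, yielding $\der\mathbf{D}^{b}(\modL)\leq d$.

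The main obstacle in both parts is to bound the number of filtration steps \emph{uniformly} in the cohomological amplitude of $X$, not merely in its length. In $(1)$ this happens for free: semisimplicity of $\Lambda/J$ collapses every cohomological degree of a stratum into a single $\langle S\rangle_{1}$-layer, so the filtration length depends only on $\ell$. In $(2)$ the analogous uniformity is more delicate, since a naive projective resolution of an amplitude-$N$ complex has length roughly $N+d$; overcoming this extra amplitude contribution is precisely what the Ghost Lemma (or the direct reorganisation of Krause--Kussin) is designed to do.
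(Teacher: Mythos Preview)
The paper does not supply its own proof of this theorem; it merely records the statement as ``well known'' and attributes it to Rouquier and Krause--Kussin, so there is no in-paper argument to compare against.

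Your sketch is essentially the content of the cited references. Part~(1) is exactly Rouquier's proof: the radical filtration of a bounded complex has subquotients that are complexes of semisimple modules, hence lie in $\langle \Lambda/J\rangle_{1}$, and the $\ell$ layers give $X\in\langle \Lambda/J\rangle_{\ell}$. For part~(2), your route via the converse Ghost Lemma and Kelly's vanishing of $(d{+}1)$-fold ghost composites is Rouquier's original argument (his Proposition~7.4 explicitly invokes Kelly). The alternative cited reference, Krause--Kussin, instead gives a direct constructive filtration that avoids ghosts altogether: they replace $X$ by a bounded complex of projectives and build the layers by hand, which is more elementary but less conceptual. Your middle sentence about a ghost ``factoring through a triangle whose third vertex has one fewer Ext-layer'' is a slightly garbled paraphrase of the inductive mechanism behind the converse Ghost Lemma; for a clean write-up it is simpler to state Kelly's theorem (any composite of $d{+}1$ maps that are zero on cohomology is null-homotopic when $\mathrm{gl.dim}\,\Lambda\le d$) and then invoke the converse Ghost Lemma for the classical generator $\Lambda$.
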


\begin{definition}\label{def:left perfect}(\cite[Definition 6.4]{KP})
	An endofunctor $\mathsf{F}:\ModB\ra \ModB$ is called {\bf left perfect} if it satisfies the following conditions:
	\begin{enumerate}
		\item $\mathbb{L}_{i}\mathsf{F}^{j}(\mathsf{F}P)=0$ for every projective $B$-module $P$ and all $i,j\geq 1.$
		\item There is a nonnegative integer $n$ such that for every positive integer $p,q\geq1$ with $p+q\geq n+1,$
		we have that $\mathbb{L}_{p}\mathsf{F}^{q}=0.$
	\end{enumerate}
\end{definition}

\begin{definition}(\cite[Definition 4.4]{CL})
	Let $R$ be a ring. We call an $R$-bimodule $M$ {\bf left perfect} provided that it satisfies
	$\mathrm{pd}_{R}M<\infty$ and
	$\mathrm{Tor}_{i}^{R}(M,M^{\oo j})=0$ for all $i,j\geq1.$
\end{definition}

\begin{remark}\label{left perfect}
	If an $R$-bimodule $M$ is left perfect and nilpotent, then the functor $-\oo_{R}M$ is left perfect and nilpotent by \cite[Lemma 6.3]{KP}.
\end{remark}

Let $(\mathscr{B},\mathscr{A},\mathsf{i},\mathsf{e},\mathsf{l})$ be a cleft extension of abelian categories. Then $\sf{i}$ admits a left adjoint by \cite[Lemma 2.2]{GP1}. In what follows, we always denote this left adjoint functor by $\sf{q}.$

The following lemma is contained in the proof of \cite[Proposition 6.3]{Kostas}. For convenience, we provide a proof.
\begin{lemma}\label{lem-derived category}
	Let $(\mathscr{B},\mathscr{A},\mathsf{i},\mathsf{e},\mathsf{l})$ be a cleft extension of abelian categories such that the induced endofunctor $\mathsf{F}$ is left perfect.
	Then we get a diagram of bounded derived categories and triangle functors as below
	$$\scalebox{0.85}{\xymatrixcolsep{3pc}\xymatrix{
			\mathbf{D}^{b}(\mathscr{B})\ar[rr]^{\mathbf{D}^{b}(\sf i)} && \mathbf{D}^{b}(\mathscr{A})\ar[rr]^{\mathbf{D}^{b}(\sf e)} \ar@/_2.0pc/[ll]_{\mathbb{L}\sf q} && \mathbf{D}^{b}(\mathscr{B}) \ar@/_2.0pc/[ll]_{\mathbb{L}\sf l} } }$$
	such that $(\mathbb{L}\sf l,\mathbf{D}^{b}(\sf e))$ and $(\mathbb{L}\sf q,\mathbf{D}^{b}(\sf i))$ are adjoint pairs, and $\mathbf{D}^{b}(\sf e)\mathbf{D}^{b}(\sf i)=\mathrm{Id}_{\mathbf{D}^{b}(\mathscr{B})}.$
\end{lemma}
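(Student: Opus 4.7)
The plan is to verify the four claims in the statement---existence of $\mathbb{L}\mathsf{l}$ and $\mathbb{L}\mathsf{q}$ as triangle functors on the bounded derived categories, the two adjunctions, and the identity $\mathbf{D}^{b}(\mathsf{e})\mathbf{D}^{b}(\mathsf{i})=\mathrm{Id}$---treating the identity first since it is essentially immediate. Both $\mathsf{i}$ and $\mathsf{e}$ are exact ($\mathsf{e}$ by assumption and $\mathsf{i}$ by \cite[Lemma 2.2]{GP1}), so each descends to a triangle functor between bounded derived categories computed termwise. The natural isomorphism $\mathsf{ei}\simeq\mathrm{Id}_{\mathscr{B}}$ then passes to $\mathbf{D}^{b}(\mathsf{e})\mathbf{D}^{b}(\mathsf{i})=\mathrm{Id}_{\mathbf{D}^{b}(\mathscr{B})}$ without further work.

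Next I would construct $\mathbb{L}\mathsf{l}$. Since $\mathsf{l}$ is a left adjoint it is right exact, and it admits a total left derived functor $\mathbb{L}\mathsf{l}\colon \mathbf{D}^{-}(\mathscr{B})\to \mathbf{D}^{-}(\mathscr{A})$ computed by taking projective resolutions in $\mathscr{B}$. On the unbounded-below level the abelian adjunction $(\mathsf{l},\mathsf{e})$ automatically extends to an adjoint pair $(\mathbb{L}\mathsf{l},\mathbf{D}^{-}(\mathsf{e}))$, because $\mathsf{e}$ is exact and hence preserves quasi-isomorphisms. The main obstacle is to show that $\mathbb{L}\mathsf{l}$ carries bounded complexes to bounded complexes, i.e.\ $\mathbb{L}_{i}\mathsf{l}(B)=0$ for $i$ sufficiently large and every $B\in\mathscr{B}$. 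Using the short exact sequence \eqref{se.F}
$$0\to \mathsf{F}(B)\to \mathsf{el}(B)\to B\to 0$$
together with Lemma \ref{lem of cleft extension}(2) and the natural isomorphism $\mathsf{eG}^{n}\simeq \mathsf{F}^{n}\mathsf{e}$, the composite $\mathbf{D}^{-}(\mathsf{e})\circ\mathbb{L}\mathsf{l}$ can be computed by an iterated spectral sequence whose entries are the compositions $\mathbb{L}_{p}\mathsf{F}^{q}$. The left-perfectness of $\mathsf{F}$ (Definition \ref{def:left perfect}) then provides a uniform numerical vanishing of all such compositions, and the faithfulness of $\mathsf{e}$ promotes this to $\mathbb{L}_{i}\mathsf{l}=0$ for $i\gg 0$. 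Standard truncation in the resolution yields a triangle functor $\mathbb{L}\mathsf{l}\colon \mathbf{D}^{b}(\mathscr{B})\to \mathbf{D}^{b}(\mathscr{A})$, and restricting the $(\mathbb{L}\mathsf{l},\mathbf{D}^{-}(\mathsf{e}))$ adjunction gives $(\mathbb{L}\mathsf{l},\mathbf{D}^{b}(\mathsf{e}))$ as required.

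The construction of $\mathbb{L}\mathsf{q}$ proceeds on exactly parallel lines, starting from the adjoint pair $(\mathsf{q},\mathsf{i})$ that was already recorded right after Definition 2.1 of a cleft extension. One derives $\mathsf{q}$ via projective resolutions in $\mathscr{A}$ and then controls its higher derived functors by expressing them through the machinery of $\mathsf{l}$, $\mathsf{e}$ and $\mathsf{F}$: because $\mathsf{i}$ is exact, the unit and counit of $(\mathsf{q},\mathsf{i})$ fit into comparison sequences that let the left-perfect assumption on $\mathsf{F}$ again bound the vanishing range. Truncation then places $\mathbb{L}\mathsf{q}$ on $\mathbf{D}^{b}(\mathscr{A})$, and the adjunction with $\mathbf{D}^{b}(\mathsf{i})$ drops from the abelian adjunction. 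I expect the only genuinely delicate point of the whole argument to be this boundedness verification: converting the numerical bound built into the definition of left-perfect into a uniform bound on the projective length of $\mathbb{L}\mathsf{l}(B)$ (and $\mathbb{L}\mathsf{q}(A)$), which is exactly where both parts of Lemma \ref{lem of cleft extension} are needed.
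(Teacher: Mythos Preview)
Your strategy coincides with the paper's: first set up the diagram on $\mathbf{D}^{-}$ with the adjunctions $(\mathbb{L}\mathsf{l},\mathbf{D}^{-}(\mathsf{e}))$ and $(\mathbb{L}\mathsf{q},\mathbf{D}^{-}(\mathsf{i}))$ and the identity $\mathbf{D}^{-}(\mathsf{e})\mathbf{D}^{-}(\mathsf{i})\simeq\mathrm{Id}$, then restrict to $\mathbf{D}^{b}$ by showing $\mathbb{L}_{n}\mathsf{l}=0=\mathbb{L}_{n}\mathsf{q}$ for $n\gg 0$. The only real difference is in how that vanishing is obtained: the paper does not rederive it but simply quotes \cite[Lemma 3.7]{Kostas} and \cite[Proposition 6.6]{KP} for the vanishing and \cite[Theorem 7.2]{P} for the passage to bounded complexes, whereas you sketch a direct argument. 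Your outline for $\mathbb{L}\mathsf{l}$ is essentially right (indeed, from \eqref{se.F} one gets $\mathsf{e}(\mathbb{L}_{i}\mathsf{l}(B))\cong\mathbb{L}_{i}\mathsf{F}(B)$ for $i\geq 1$, and condition~(2) in Definition~\ref{def:left perfect} plus faithfulness of $\mathsf{e}$ finish it; no iterated spectral sequence is needed). Your treatment of $\mathbb{L}\mathsf{q}$, however, is left rather vague: ``comparison sequences'' for the unit and counit of $(\mathsf{q},\mathsf{i})$ do not by themselves make visible how the numerical bound on $\mathbb{L}_{p}\mathsf{F}^{q}$ transfers to $\mathsf{q}$, and Lemma~\ref{lem of cleft extension} plays no evident role there. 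This is exactly the content that the paper outsources to \cite[Proposition 6.6]{KP}, so if you want a self-contained argument you would need to spell that step out.
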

\begin{proof}
	First we get the following diagram of derived categories
	$$\scalebox{0.85}{\xymatrixcolsep{3pc}\xymatrix{
			\mathbf{D}^{-}(\mathscr{B})\ar[rr]^{\mathbf{D}^{-}(\sf i)} && \mathbf{D}^{-}(\mathscr{A})\ar[rr]^{\mathbf{D}^{-}(\sf e)} \ar@/_2.0pc/[ll]_{\mathbb{L}\sf q}&& \mathbf{D}^{-}(\mathscr{B}) \ar@/_2.0pc/[ll]_{\mathbb{L}\sf l} } }$$
	with $(\mathbb{L}\sf l,\mathbf{D}^{-}(\sf e))$ and $(\mathbb{L}\sf q,\mathbf{D}^{-}(\sf i))$ being adjoint pairs of triangle functors.
	Since $\mathsf{i}$ and $\mathsf{e}$ are exact functors with $\mathsf{e}\mathsf{i}\simeq \mathrm{Id}_{\mathscr{B}},$ it follows that $\mathbf{D}^{-}(\sf e)\mathbf{D}^{-}(\sf i)\simeq \mathrm{Id}_{\mathbf{D}^{-}(\mathscr{B})}.$
	Note that $\mathsf{F}$ is left perfect by assumption. It follows from \cite[Lemma 3.7]{Kostas} and \cite[Proposition 6.6]{KP} that $\mathbb{L}_{n}\mathsf{l}=0=\mathbb{L}_{n}\mathsf{q}$ for $n\gg 0.$
	Then by a similar argument of \cite[Theorem 7.2]{P} we know that $\mathbb{L}\mathsf{l}(Y^{\bullet})\in \mathbf{D}^{b}(\mathscr{A})$ and $\mathbb{L}\mathsf{q}(X^{\bullet})\in \mathbf{D}^{b}(\mathscr{B})$ for any $X^{\bullet}\in \mathbf{D}^{b}(\mathscr{A})$ and $Y^{\bullet}\in \mathbf{D}^{b}(\mathscr{B}).$
	So we have the following diagram
	$$\scalebox{0.85}{\xymatrixcolsep{3pc}\xymatrix{
			\mathbf{D}^{b}(\mathscr{B})\ar[rr]^{\mathbf{D}^{b}(\sf i)} && \mathbf{D}^{b}(\mathscr{A})\ar[rr]^{\mathbf{D}^{b}(\sf e)} \ar@/_2.0pc/[ll]_{\mathbb{L}\sf q} && \mathbf{D}^{b}(\mathscr{B}), \ar@/_2.0pc/[ll]_{\mathbb{L}\sf l} } }$$
	which completes the proof.
\end{proof}

Now we turn our attention to cleft extensions of module categories.
Let $(\ModB,\ModA,\mathsf{i},\mathsf{e},\mathsf{l})$ be a cleft extension of module categories.
Recall from \cite[Subsection 6.4]{KP} that there is an endofunctor $\mathsf{R}:\ModA\ra \ModA$ which appears as the kernel of the unit map of the adjoint pair $(\mathsf{q},\mathsf{i})$:
\begin{equation*}
	0\ra \mathsf{R}\ra \mathrm{Id}_{\ModA}\ra \mathsf{iq}\ra 0.
\end{equation*}
Moreover $\sf R$ is nilpotent whenever $\mathsf{F}$ is nilpotent by \cite[Lemma 6.10]{KP}.

Let $\Lambda$ be a ring. We write $\mathbf{D}^{b}(\modL)$ for the bounded derived category of $\modL$, and let $\mathbf{C}(\modL)$ denote the category of complexes of finitely generated right $\Lambda$-modules.

Given a cleft extension $(\ModB,\ModA,\mathsf{i},\mathsf{e},\mathsf{l})$ of module categories. The following result provides bounds for the dimension of triangulated category $\mathbf{D}^{b}(\modA)$ in terms of the dimension of $\mathbf{D}^{b}(\modB).$

\begin{theorem}\label{thm:dimension of triangulated category}
	Let $A$ and $B$ be two Noetherian algebras, and
	let $(\ModB,\ModA,\mathsf{i},\mathsf{e},\mathsf{l})$ be a cleft extension of module categories.
	If the induced endofunctor $\mathsf{F}$ is left perfect and nilpotent,
	then $$\der \mathbf{D}^{b}(\modB)\leq \der \mathbf{D}^{b}(\modA)\leq n(\der \mathbf{D}^{b}(\modB)+1)-1,$$
	where $n$ is a positive integer such that $\mathsf{F}^{n}=0.$
\end{theorem}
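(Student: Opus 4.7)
The plan is to adapt the argument of Theorem \ref{main thm} to the bounded derived category, using Lemma \ref{lem-derived category} as the primary technical input. Since $\mathsf{F}$ is left perfect and nilpotent, Lemma \ref{lem-derived category} supplies triangle functors $\mathbb{L}\mathsf{l},\mathbb{L}\mathsf{q},\mathbf{D}^{b}(\mathsf{i}),\mathbf{D}^{b}(\mathsf{e})$ between $\mathbf{D}^{b}(\modB)$ and $\mathbf{D}^{b}(\modA)$, together with the adjunctions $(\mathbb{L}\mathsf{l},\mathbf{D}^{b}(\mathsf{e}))$ and $(\mathbb{L}\mathsf{q},\mathbf{D}^{b}(\mathsf{i}))$ and the identification $\mathbf{D}^{b}(\mathsf{e})\mathbf{D}^{b}(\mathsf{i})\simeq\mathrm{Id}_{\mathbf{D}^{b}(\modB)}$.

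For the left-hand inequality I would imitate the step used for $\IT(\mathscr{B})\leq\IT(\mathscr{A})$ in Theorem \ref{thm:IT} and for $\dim\mathscr{B}\leq\dim\mathscr{A}$ in Theorem \ref{main thm}: if $\mathbf{D}^{b}(\modA)=\langle T^{\bullet}\rangle_{k+1}$ for some $T^{\bullet}$, then for every $Y^{\bullet}\in\mathbf{D}^{b}(\modB)$ we have $\mathbf{D}^{b}(\mathsf{i})(Y^{\bullet})\in\langle T^{\bullet}\rangle_{k+1}$, and applying the triangle functor $\mathbf{D}^{b}(\mathsf{e})$ together with $\mathbf{D}^{b}(\mathsf{e})\mathbf{D}^{b}(\mathsf{i})\simeq\mathrm{Id}$ yields $Y^{\bullet}\cong\mathbf{D}^{b}(\mathsf{e})\mathbf{D}^{b}(\mathsf{i})(Y^{\bullet})\in\langle\mathbf{D}^{b}(\mathsf{e})(T^{\bullet})\rangle_{k+1}$, so $\der\mathbf{D}^{b}(\modB)\leq\der\mathbf{D}^{b}(\modA)$.

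For the right-hand inequality, suppose $\der\mathbf{D}^{b}(\modB)=m$ and fix $T^{\bullet}\in\mathbf{D}^{b}(\modB)$ with $\mathbf{D}^{b}(\modB)=\langle T^{\bullet}\rangle_{m+1}$. The target is to show $\mathbf{D}^{b}(\modA)=\langle\mathbb{L}\mathsf{l}(T^{\bullet})\rangle_{n(m+1)}$. Each short exact sequence of Lemma \ref{lem of cleft extension}(2) should upgrade, after passing to derived functors, to a distinguished triangle
$$\mathbb{L}\mathsf{G}^{i}(X^{\bullet})\longrightarrow\mathbb{L}\mathsf{l}\,\mathbb{L}\mathsf{F}^{i-1}\mathbf{D}^{b}(\mathsf{e})(X^{\bullet})\longrightarrow\mathbb{L}\mathsf{G}^{i-1}(X^{\bullet})\longrightarrow\mathbb{L}\mathsf{G}^{i}(X^{\bullet})[1]$$
in $\mathbf{D}^{b}(\modA)$ for $i=1,\ldots,n$, with $\mathbb{L}\mathsf{G}^{0}=\mathrm{Id}$ and $\mathbb{L}\mathsf{G}^{n}=0$; the existence, boundedness and vanishing follow from the left-perfect hypothesis exactly as in the proof of Lemma \ref{lem-derived category}, combined with Lemma \ref{lem of cleft extension}(1). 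Splicing these $n$ triangles would exhibit $X^{\bullet}$ as an $n$-fold iterated extension,
$$X^{\bullet}\in\langle\mathbb{L}\mathsf{l}\mathbf{D}^{b}(\mathsf{e})(X^{\bullet})\rangle_{1}\diamond\langle\mathbb{L}\mathsf{l}\,\mathbb{L}\mathsf{F}\mathbf{D}^{b}(\mathsf{e})(X^{\bullet})\rangle_{1}\diamond\cdots\diamond\langle\mathbb{L}\mathsf{l}\,\mathbb{L}\mathsf{F}^{n-1}\mathbf{D}^{b}(\mathsf{e})(X^{\bullet})\rangle_{1}.$$
Since every $\mathbb{L}\mathsf{F}^{i-1}\mathbf{D}^{b}(\mathsf{e})(X^{\bullet})$ lies in $\mathbf{D}^{b}(\modB)=\langle T^{\bullet}\rangle_{m+1}$ and $\mathbb{L}\mathsf{l}$ is a triangle functor, each individual layer sits in $\langle\mathbb{L}\mathsf{l}(T^{\bullet})\rangle_{m+1}$. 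Invoking the standard submultiplicativity $\langle-\rangle_{a}\diamond\langle-\rangle_{b}\subseteq\langle-\rangle_{a+b}$ I would conclude $X^{\bullet}\in\langle\mathbb{L}\mathsf{l}(T^{\bullet})\rangle_{n(m+1)}$, whence $\der\mathbf{D}^{b}(\modA)\leq n(m+1)-1$.

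The hard part is rigorously justifying the triangles of derived endofunctors and, in particular, the vanishing $\mathbb{L}\mathsf{G}^{n}=0$: at the abelian level $\mathsf{G}^{n}=0$ is immediate from Lemma \ref{lem of cleft extension}(1), but transporting this to iterated left-derived functors on $\mathbf{D}^{b}(\modA)$ is subtle because derived functors of a composition need not coincide with the composition of derived functors. Establishing the required compatibility leans crucially on the left-perfect hypothesis (Definition \ref{def:left perfect}) and on the vanishing of the higher $\mathbb{L}_{j}\mathsf{l}$ and $\mathbb{L}_{j}\mathsf{q}$ supplied by \cite{Kostas, KP} that already underwrites Lemma \ref{lem-derived category}; this is where I expect most of the technical work to sit.
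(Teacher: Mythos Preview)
Your left-hand inequality is essentially the paper's argument (the paper phrases it via \cite[Lemma 3.4]{Rouquier}, but it amounts to the same thing).

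For the right-hand inequality your overall shape is correct, but the paper takes a different and much simpler route that sidesteps precisely the difficulty you flag. Instead of the endofunctor $\mathsf{G}$ coming from the counit of $(\mathsf{l},\mathsf{e})$, the paper uses the endofunctor $\mathsf{R}:\ModA\to\ModA$ defined as the kernel of the unit of the \emph{other} adjunction $(\mathsf{q},\mathsf{i})$, so that there is a short exact sequence of functors $0\to\mathsf{R}\to\mathrm{Id}_{\ModA}\to\mathsf{iq}\to 0$. By \cite[Lemma 6.10]{KP}, $\mathsf{F}^{n}=0$ forces $\mathsf{R}^{n}=0$ on the nose. Applying this sequence termwise to any bounded complex $X^{\bullet}\in\mathbf{C}(\modA)$ (the cleft extension lifts to complexes) yields short exact sequences
$0\to\mathsf{R}^{t}(X^{\bullet})\to\mathsf{R}^{t-1}(X^{\bullet})\to\mathsf{iq}\mathsf{R}^{t-1}(X^{\bullet})\to 0$
in $\mathbf{C}(\modA)$, hence triangles in $\mathbf{D}^{b}(\modA)$, with $\mathsf{R}^{n}(X^{\bullet})=0$ literally. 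The successive quotients are $\mathsf{iq}\mathsf{R}^{t-1}(X^{\bullet})=\mathbf{D}^{b}(\mathsf{i})\big(\mathsf{q}\mathsf{R}^{t-1}(X^{\bullet})\big)$; since $\mathsf{i}$ is exact, $\mathbf{D}^{b}(\mathsf{i})$ is just termwise $\mathsf{i}$, and $\mathsf{q}\mathsf{R}^{t-1}(X^{\bullet})$ is a bounded complex in $\modB$, hence lies in $\langle W\rangle_{m+1}$. So each layer sits in $\langle\mathsf{i}(W)\rangle_{m+1}$ and one concludes $X^{\bullet}\in\langle\mathsf{i}(W)\rangle_{n(m+1)}$.

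The point is that by filtering via $(\mathsf{q},\mathsf{i})$ rather than $(\mathsf{l},\mathsf{e})$, the successive quotients involve only the \emph{exact} functor $\mathsf{i}$, so no derived functors of non-exact functors ever enter, and the generator is $\mathsf{i}(W)$ rather than $\mathbb{L}\mathsf{l}(T^{\bullet})$. This completely avoids the issues you correctly anticipate: there is no need to make sense of $\mathbb{L}\mathsf{G}^{i}$, no need to compare $\mathbb{L}(\mathsf{l}\mathsf{F}^{i-1}\mathsf{e})$ with $\mathbb{L}\mathsf{l}\circ\mathbb{L}\mathsf{F}^{i-1}\circ\mathbf{D}^{b}(\mathsf{e})$, and the vanishing at step $n$ is a termwise identity rather than a derived statement. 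Your approach may well be completable with enough care using the left-perfect hypothesis, but the paper's choice of filtration makes the argument essentially formal.
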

\begin{proof}
	By Lemma \ref{lem-derived category} and Remark \ref{remark:fg}, we have the diagram
	$$\scalebox{0.85}{\xymatrixcolsep{3pc}\xymatrix{
			\mathbf{D}^{b}(\modB)\ar[rr]^{\mathbf{D}^{b}(\sf i)} && \mathbf{D}^{b}(\modA)\ar[rr]^{\mathbf{D}^{b}(\sf e)} \ar@/_2.0pc/[ll]_{\mathbb{L}\sf q} && \mathbf{D}^{b}(\modB) \ar@/_2.0pc/[ll]_{\mathbb{L}\sf l} } }$$
	with $\mathbf{D}^{b}(\mathsf{e})\mathbf{D}^{b}(\mathsf i)=\mathrm{Id}_{\mathbf{D}^{b}(\modB)}.$
	Thus $\mathbf{D}^{b}(\sf e)$ is surjective on objects.
	It follows from \cite[Lemma 3.4]{Rouquier} that
$$\der \mathbf{D}^{b}(\modB)\leq\der \mathbf{D}^{b}(\modA).$$
	
Assume that $\der \mathbf{D}^{b}(\modB)=m$. Then there is a complex $W\in \mathbf{D}^{b}(\modB)$ with $\mathbf{D}^{b}(\modB)=\langle W \rangle_{m+1}.$
	Set $V=\mathsf{i}(W).$
	According to Remark \ref{remark:fg}, we have the following
	cleft extension
	$$\scalebox{0.85}{\xymatrixcolsep{2pc}\xymatrix{
			\modB\ar[rr]^{\sf i} && \modA\ar[rr]^{\sf e} \ar@/_2.0pc/[ll]_{\sf q}  && \modB \ar@/_2.0pc/[ll]_{\sf l} } }$$
	which induces the cleft extension of abelian categories
	$$\scalebox{0.85}{\xymatrixcolsep{2pc}\xymatrix{
			\mathbf{C}(\modB)\ar[rr]^{\sf i} && \mathbf{C}(\modA)\ar[rr]^{\sf e} \ar@/_2.0pc/[ll]_{\sf q}  && \mathbf{C}(\modB). \ar@/_2.0pc/[ll]_{\sf l} } }$$
	Thus for any complex $X^{\bullet}\in \mathbf{D}^{b}(\modA)$ we obtain exact sequences in $\mathbf{C}(\modA)$
	\begin{equation*}
		\begin{cases}
			0\ra \mathsf{R}(X^{\bullet})\ra X^{\bullet}\ra \mathsf{iq}(X^{\bullet})\ra 0;\\
			0\ra \mathsf{R}^{2}(X^{\bullet})\ra \mathsf{R}(X^{\bullet})\ra \mathsf{iq}\mathsf{R}(X^{\bullet})\ra 0;\\
			\;\;\;\;\;\;\;\;\;\;\;\;\;\;\vdots\\
			$$0\ra \mathsf{R}^{n-1}(X^{\bullet})\ra \mathsf{R}^{n-2}(X^{\bullet})\ra \mathsf{iq}\mathsf{R}^{n-2}(X^{\bullet})\ra 0;\\
			0\ra \mathsf{R}^{n}(X^{\bullet})\ra \mathsf{R}^{n-1}(X^{\bullet})\ra \mathsf{iq}\mathsf{R}^{n-1}(X^{\bullet})\ra 0,\\
		\end{cases}
	\end{equation*}
	which in turn give triangles in $\mathbf{D}^{b}(\modA)$
	\begin{equation*}
		\begin{cases}
			\mathsf{R}(X^{\bullet})\ra X^{\bullet}\ra \mathsf{iq}(X^{\bullet})\ra \mathsf{R}(X^{\bullet})[1];\\
			\mathsf{R}^{2}(X^{\bullet})\ra \mathsf{R}(X^{\bullet})\ra \mathsf{iq}\mathsf{R}(X^{\bullet})\ra \mathsf{R}^{2}(X^{\bullet})[1];\\
			\;\;\;\;\;\;\;\;\;\;\;\;\;\;\vdots\\
			\mathsf{R}^{n-1}(X^{\bullet})\ra \mathsf{R}^{n-2}(X^{\bullet})\ra \mathsf{iq}\mathsf{R}^{n-2}(X^{\bullet})\ra \mathsf{R}^{n-1}(X^{\bullet})[1];\\
			\mathsf{R}^{n}(X^{\bullet})\ra \mathsf{R}^{n-1}(X^{\bullet})\ra \mathsf{iq}\mathsf{R}^{n-1}(X^{\bullet})\ra \mathsf{R}^{n}(X^{\bullet})[1].
		\end{cases}
	\end{equation*}
	Since $\mathsf{F}^{n}=0$ for some positive integer $n$ by assumption, it follows from \cite[Lemma 6.10]{KP} and \cite[Proposition 7.4(i)]{Be1} that $\mathsf{R}^{n}=0$ and so
	$\mathsf{R}^{n-1}(X^{\bullet})\cong\mathsf{iq}\mathsf{R}^{n-1}(X^{\bullet})$.
Since $\sf{i}$ is exact, it follows that $$\mathsf{iq}\mathsf{R}^{t}(X^{\bullet})\cong\mathbf{D}^{b}(\mathsf{i})(\mathsf{q}\mathsf{R}^{t}(X^{\bullet}))
	\subseteq \mathbf{D}^{b}(\mathsf{i})(\langle W\rangle_{m+1})\subseteq \langle \mathsf{i}(W)\rangle_{m+1}$$ for $1\leq t\leq n-1$.
	Thus $\mathsf{R}^{n-2}(X^{\bullet})\in \langle \mathsf{i}(W)\rangle_{m+1}\diamond \langle \mathsf{i}(W)\rangle_{m+1}=\langle \mathsf{i}(W)\rangle_{2(m+1)}$ by \cite[Lemma 7.3]{P}.
	Repeating this argument, we infer that $\mathsf{R}(X^{\bullet})\in \langle \mathsf{i}(W)\rangle_{(n-1)(m+1)}$.
	It follows from the triangle
	$$\mathsf{R}(X^{\bullet})\ra X^{\bullet}\ra \mathsf{iq}(X^{\bullet})\ra \mathsf{R}(X^{\bullet})[1]$$
	that $X^{\bullet}\in \langle \mathsf{i}(W)\rangle_{n(m+1)}$.
	Consequently, $\der\mathbf{D}^{b}(\modA)\leq n(\der \mathbf{D}^{b}(\modB)+1)-1.$
\end{proof}




\section{Applications}

In this section, we will apply our main results to Morita context rings, $\theta$-extension rings, tensor rings and arrow removals.
For a Noetherian algebra $\Lambda$, the Igusa-Todorov distance and extension dimension  of $\modL$ are denoted by $\IT(\Lambda)$ and $\dim (\Lambda)$ respectively.

Given an Artin algebra $\Lambda$, if $\IT(\Lambda)\leq 1$, then $\Lambda$ is an Igusa-Todorov algebra introduced by Wei \cite{W}.
It is known that the {\bf finitistic dimension conjecture} holds for every Igusa-Todorov algebra (see \cite[Theorem 1.1]{W}).
Recall that the finitistic dimension $\mathsf{fin. dim}(\Lambda)$ of $\Lambda$ is defined as the supremum of the projective dimension of all finitely generated right $\Lambda$-modules of finite
projective dimension.
The finitistic dimension conjecture asserts that $\mathsf{fin. dim}(\Lambda)<\infty$ for any Artin algebra $\Lambda$. For more on the results of the finitistic dimension conjecture we refer to \cite{GP1,xi2004finitistic,xi2006finitistic,xi2008finitistic,W}.

\begin{corollary}
Let $\Lambda =\left(\begin{smallmatrix}  A & {_{A}}N_{B}\\  {_{B}}M_{A} & B \\\end{smallmatrix}\right)$
	be a Morita context ring which is a Noetherian algebra. Assume that $N\oo_{B}M=0=M\oo_{A}N$.
	\begin{enumerate}
		\item[$(1)$]  If ${N_{B}}, {M_{A}}$, ${_{A}N}$ and ${_{B}M}$ are projective, then  $$\max\{\IT (A),\IT (B)\}\leq\IT (\Lambda)\leq 2\max\{\IT (A),\IT (B)\}+1.$$
Moreover, if $A$ and $B$ are Artin algebras which are syzygy-finite, then $\Lambda$ is an Igusa-Todorov algebra, and thus $\mathsf{fin. dim}(\Lambda)<\infty$.

		\item[$(2)$] If ${N_{B}}$ and ${M_{A}}$ are projective, then $$\max\{\dim (A),\dim (B)\} \leq \dim(\Lambda) \leq 2\max\{\dim (A),\dim (B)\}+1.$$
		
		\item[$(3)$] If $\Tor_{i}^{B}(N,M)=0=\Tor_{i}^{A}(M,N)$ for all $i\geq 1,$ and ${_{A}N}$ and ${_{B}M}$ have finite projective dimension, then
		$$\max\{\der \mathbf{D}^{b}(\modA),\der \mathbf{D}^{b}(\modB)\}\leq\der \mathbf{D}^{b}(\modL)$$$$\leq 2\max\{\der \mathbf{D}^{b}(\modA),\der \mathbf{D}^{b}(\modB)\}+1.$$
	\end{enumerate}
\end{corollary}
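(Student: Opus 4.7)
The plan is to apply Theorems~\ref{thm:IT}, \ref{main thm}, and \ref{thm:dimension of triangulated category} to the cleft extension and cleft coextension of $\ModL$ over $\ModAB$ described in Example~\ref{ex:Morita ring}, and to combine the resulting bounds with the elementary identification $\modAB\cong\modA\times\modB$ and its analogue $\mathbf{D}^{b}(\modAB)\cong\mathbf{D}^{b}(\modA)\times\mathbf{D}^{b}(\modB)$ at the derived level. Under this product decomposition each of $\IT$, $\dim$, and $\der$ on the product coincides with the maximum over the two factors: witnesses in each factor assemble into a witness on the product by direct sum, while the exact projections onto each factor provide the reverse inequality. Moreover, the hypothesis $N\oo_{B}M=0=M\oo_{A}N$ forces $n=2$ in every application, since from the formulas of Example~\ref{ex:Morita ring} one has $\mathsf{F}^{2}(X,Y)=\bigl((X\oo_{A}N)\oo_{B}M,(Y\oo_{B}M)\oo_{A}N\bigr)=0$, and by the tensor-hom adjunction $\Hom_{B}(N,\Hom_{A}(M,X))\cong\Hom_{A}(N\oo_{B}M,X)=0$ (and symmetrically), so $\mathsf{F}'^{2}=0$ as well.

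For (1), I will check that $\mathsf{l}$ is exact and $\mathsf{e}$ preserves projectives. The former follows from flatness of $_{A}N$ and $_{B}M$ (both being projective), which makes the tensor components of $\mathsf{l}$ exact. The latter follows from the identification $\mathsf{e}(e_{1}\Lambda)=(A,N)$ and $\mathsf{e}(e_{2}\Lambda)=(M,B)$, which lie in $\add(A_{A},B_{B})\subseteq\modAB$ precisely when $N_{B}$ and $M_{A}$ are projective. Theorem~\ref{thm:IT} with $n=2$ then yields $\IT(\modAB)\le\IT(\Lambda)\le 2\IT(\modAB)+1$. Part (2) is analogous: projectivity of $N_{B}$ and $M_{A}$ makes $\Hom_{B}(N,-)$ and $\Hom_{A}(M,-)$ exact, hence $\mathsf{r}$ is exact, and Theorem~\ref{main thm} applies.

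Part (3) requires the most care. By Remark~\ref{remark:fg}, the cleft extension is equivalent to a $\theta$-extension of $\Gamma=A\times B$ by the $\Gamma$-bimodule $M_{0}=\left(\begin{smallmatrix}0&N\\M&0\end{smallmatrix}\right)$, and the endofunctor $\mathsf{F}$ is naturally isomorphic to $-\oo_{\Gamma}M_{0}$. Invoking the remark after Definition~5.7, which identifies left-perfect bimodules with left-perfect tensor functors, I would verify that $M_{0}$ is a left-perfect nilpotent $\Gamma$-bimodule: the projective dimension $\mathrm{pd}_{\Gamma}(M_{0})=\max\{\mathrm{pd}(_{A}N),\mathrm{pd}(_{B}M)\}$ is finite by hypothesis; the block calculation $M_{0}\oo_{\Gamma}M_{0}\cong\left(\begin{smallmatrix}N\oo_{B}M&0\\0&M\oo_{A}N\end{smallmatrix}\right)=0$ gives vanishing of all higher tensor powers (hence nilpotence); and $\Tor_{i}^{\Gamma}(M_{0},M_{0})=0$ for $i\ge 1$ reduces by the same block decomposition to the assumed $\Tor_{i}^{B}(N,M)=0=\Tor_{i}^{A}(M,N)$. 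Hence $\mathsf{F}$ is left perfect and nilpotent, and Theorem~\ref{thm:dimension of triangulated category} delivers the claimed bounds.

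The main obstacle I anticipate is the bimodule bookkeeping in part (3), namely identifying $\mathsf{F}$ with tensoring by $M_{0}$ over $A\times B$ and computing the requisite tensor powers and Tor groups through the block structure. A secondary point is that the whole picture restricts cleanly to finitely generated modules over the Noetherian algebras $A$, $B$, and $\Lambda$, which is granted by Remark~\ref{remark:fg}.
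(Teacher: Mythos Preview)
Your proposal is correct and follows essentially the same approach as the paper: invoke the cleft (co)extension of Example~\ref{ex:Morita ring}, observe that $N\otimes_{B}M=0=M\otimes_{A}N$ forces $\mathsf{F}^{2}=0$ (and $\mathsf{F}'^{2}=0$), and apply Theorems~\ref{thm:IT}, \ref{main thm}, and \ref{thm:dimension of triangulated category} with $n=2$. You in fact supply more detail than the paper does---the explicit verification that $\mathsf{l}$, $\mathsf{r}$ are exact, that $\mathsf{e}$ preserves projectives, that $\mathsf{F}$ is left perfect via the bimodule $M_{0}$, and the reduction $\IT(\modAB)=\max\{\IT(A),\IT(B)\}$ (and likewise for $\dim$ and $\der$)---all of which the paper leaves implicit.
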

\begin{proof}
 We mention that $M\oplus N$ is an $A\times B$-bimodule and $\Lambda\simeq(A\times B)\ltimes (M\oplus N)$; see \cite[Proposition 2.5]{GP}.
Since $\Lambda$ is a Noetherian algebra and $A\times B$-bimodule $M\oplus N$ is finitely generated on both sides, it follows from \cite[Proposition 7.5]{Be1} that $A$ and $B$ are Noetherian algebras.
	By Example \ref{ex:Morita ring} and Remark \ref{remark:fg}, there is a cleft extension of module categories:
	$$\scalebox{0.85}{\xymatrixcolsep{2pc}\xymatrix{
			\modAB\ar[rr]^{\sf i} && \modL\ar[rr]^{\sf e}  && \modAB \ar@/_2.0pc/[ll]_{\sf l}\ar@/^2.0pc/[ll]^{\sf r} } }$$
	Note that $\mathsf{F}(X,Y)=(Y\oo_{B}M,X\oo_{A}N)$.
	It follows from $N\oo_{B}M=0=M\oo_{A}N$ that $\mathsf{F}^{2}=0.$

(1) If $M_{A}, N_{B}, {_{A}N}$ and $_{B}M$ are projective, then it follows from Example \ref{ex:Morita ring} that both $\sf l$ and $\sf r$ are exact. Thus $\sf e$ preserves projectives. Hence statement (1) follows from Theorem \ref{thm:IT}.

(2) If ${N_{B}}$ and ${M_{A}}$ are projective, then we know from Example \ref{ex:Morita ring} that $\sf r$ is exact. Hence statement (2) follows from Theorem \ref{main thm}.

(3) If $\Tor_{i}^{B}(N,M)=0=\Tor_{i}^{A}(M,N)$ for $i\geq 1$ and ${_{A}N}$ and ${_{B}M}$ have finite projective dimension, it follows from \cite[Example 5.4]{Kostas} that $\sf F$ is left perfect. Hence statement (3) follows from Theorem \ref{thm:dimension of triangulated category}.
\end{proof}

\begin{corollary}\label{IT:corollary}
	Let $\Gamma$ be a Noetherian algebra, let $M$ be a $\Gamma$-bimodule and $\theta:M\oo_{\Gamma}M\ra M$ an associative $\Gamma$-bimodule homomorphism with $M^{\oo n}=0$ for some positive integer $n$.
	\begin{enumerate}
		\item[$(1)$] If $_{\Gamma}M$ and $M_{\Gamma}$ are projective, then
		$$\IT (\Gamma)\leq\IT (\Gamma\ltimes_{\theta}M)\leq n(\IT (\Gamma) +1)-1.$$
Moreover, if~$\Gamma$  is a syzygy-finite Artin algebra and $M\otimes_{\Gamma}M=0$, then $\Gamma\ltimes_{\theta}M$ is an Igusa-Todorov algebra, and thus $\mathsf{fin. dim}(\Gamma\ltimes_{\theta}M)<\infty$.
		\item[$(2)$] If $M_{\Gamma}$ is projective, then
        $$\dim (\Gamma) \leq \dim (\Gamma\ltimes_{\theta}M) \leq n(\dim (\Gamma) +1)-1.$$
		
		\item[$(3)$] If $M$ is left perfect, then
		$$\der \mathbf{D}^{b} (\modL)\leq\der \mathbf{D}^{b}(\modG)\leq n(\der \mathbf{D}^{b} (\modL) +1)-1.$$
	\end{enumerate}
	
\end{corollary}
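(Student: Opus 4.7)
The plan is to realize the $\theta$-extension as a concrete instance of the cleft extension of module categories from Example \ref{ex:extension}, restricted to finitely generated modules via Remark \ref{remark:fg}, and then read off the three bounds from Theorems \ref{thm:IT}, \ref{main thm}, and \ref{thm:dimension of triangulated category} in turn. The first step is to identify the induced endofunctor. Since $\Gamma\ltimes_{\theta}M\cong \Gamma\oplus M$ as $\Gamma$-bimodules, the defining sequence \eqref{se.F} at an object $B\in \modg$ splits in the middle as
$$0\ra B\otimes_{\Gamma}M\ra B\oplus(B\otimes_{\Gamma}M)\ra B\ra 0,$$
so $\mathsf{F}(B)\cong B\otimes_{\Gamma}M$, and hence $\mathsf{F}^{n}(B)\cong B\otimes_{\Gamma}M^{\otimes n}$. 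The hypothesis $M^{\otimes n}=0$ therefore forces $\mathsf{F}^{n}=0$, and Lemma \ref{extension-coextension} yields $\mathsf{F}'^{n}=0$ as well.

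With this shared input, each part reduces to checking one exactness condition and invoking the corresponding theorem. For (1), projectivity of $_{\Gamma}M$ makes $_{\Gamma}(\Gamma\ltimes_{\theta}M)$ projective, hence $\mathbf{T}=-\otimes_{\Gamma}(\Gamma\ltimes_{\theta}M)$ is exact; projectivity of $M_{\Gamma}$ makes $(\Gamma\ltimes_{\theta}M)_{\Gamma}$ projective, so the restriction functor $\mathbf{U}$ sends projective $\Gamma\ltimes_{\theta}M$-modules to projective right $\Gamma$-modules. Since projectives in $\modg$ form $\add(\Gamma)$, Theorem \ref{thm:IT} applies and yields the bound. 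For (2), projectivity of $M_{\Gamma}$ is exactly what makes $\mathbf{H}=\Hom_{\Gamma}(\Gamma\ltimes_{\theta}M,-)$ exact, and Theorem \ref{main thm} applied to the cleft coextension delivers the inequality. For (3), the remark noting that left perfectness of the bimodule $M$ implies left perfectness of the endofunctor $-\otimes_{\Gamma}M$, together with nilpotency, lets Theorem \ref{thm:dimension of triangulated category} produce the estimate.

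The main obstacle, such as it is, is simply keeping track of left versus right module structures so that the correct side of $M$ controls the correct adjoint functor in each of the three applications; beyond that, no ideas are required past the cleft extension formalism already developed.
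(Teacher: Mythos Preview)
Your proposal is correct and follows essentially the same route as the paper: set up the cleft (co)extension from Example~\ref{ex:extension} and Remark~\ref{remark:fg}, identify $\mathsf{F}\cong -\otimes_{\Gamma}M$ so that $M^{\otimes n}=0$ forces $\mathsf{F}^{n}=0$ (and hence $\mathsf{F}'^{n}=0$), and then invoke Theorems~\ref{thm:IT}, \ref{main thm}, and \ref{thm:dimension of triangulated category} after checking the relevant exactness/projectivity hypotheses. One minor slip: the hypothesis $\mathsf{Proj}\mathscr{A}=\add(P)$ in Theorem~\ref{thm:IT} refers to $\mathscr{A}=\sMod{\Gamma\ltimes_{\theta}M}$, not to $\modg$, but this is equally satisfied since $\Gamma\ltimes_{\theta}M$ is Noetherian.
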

\begin{proof}
	It follows from \cite[Proposition 7.5(iv)]{Be1} that $\Gamma\ltimes_{\theta}M$ is a Noetherian algebra.
	By Example \ref{ex:extension} and Remark \ref{remark:fg}, there is a cleft extension of module categories:
	$$\scalebox{0.85}{\xymatrixcolsep{2pc}\xymatrix{
			\modg\ar[rr]^{\mathbf{Z}} && \modG\ar[rr]^{\mathbf{U}}  && \modg \ar@/^2.0pc/[ll]^{\mathbf{H}=\mathrm{Hom}_{\Gamma}(\Gamma\ltimes_{\theta}M,-)} \ar@/_2.0pc/[ll]_{\mathbf{T}=-\oo_{\Gamma}\Gamma\ltimes_{\theta}M} } }$$
	It is easy to check that the induced endofunctor $\mathsf{F}$ on $\modL$ is naturally isomorphic to $-\oo_{\Gamma}M$
	and  $\mathsf{F}^{n}=0$ as $M^{\oo n}=0$ by assumption.
	Thus endofunctor $\mathsf{F}'$ on $\sMod \Gamma$ is $\Hom_{\Gamma}(M,-)$ and it satisfies $\mathsf{F}'^{n}=0$.

 (1) If $_{\Gamma}M$ and $M_{\Gamma}$ are projective, then it follows that both $\bf T$ and $\bf H$ are exact. Thus $\mathbf{U}$ preserves projectives. Hence statement (1) follows from Theorem \ref{thm:IT}.

(2) If $M_{\Gamma}$ is projective, then it follows that $\bf H$ is exact. Hence statement (2) follows from Theorem \ref{main thm}.

(3) If $M$ is left perfect, then it follows from Remark \ref{left perfect} that $\sf F$ is left perfect. Hence statement (3) follows from Theorem \ref{thm:dimension of triangulated category}.
\end{proof}

Let $\Gamma$ be a ring and $M$ a $\Gamma$-bimodule. We mention that tensor ring $T_{\Gamma}(M)$ can be viewed as a $\theta$-extension of $\Gamma$ by $M'$ under an isomorphism $T_{\Gamma}(M)\cong \Gamma\ltimes_{\theta}M'$ where $M'=M\oplus M^{\oo 2}\oplus\cdots$ and $\theta$ is induced by $M^{\oo k}\oo_{\Gamma}M^{\oo l}\ra M^{\oo k+l}.$ Thus we have the following result by Corollary \ref{IT:corollary}.
\begin{corollary}
	Let $\Gamma$ be a Noetherian algebra and $M$ a $\Gamma$-bimodule with $M^{\oo n}=0$ for some positive integer $n$.
	\begin{enumerate}
		\item[$(1)$] If $_{\Gamma}M$ and $M_{\Gamma}$ are projective, then
		$$\IT (\Gamma)\leq\IT (T_{\Gamma}(M))\leq n(\IT (\Gamma) +1)-1$$
		and
		$$\dim (\Gamma)\leq\dim (T_{\Gamma}(M)) \leq n(\dim (\Gamma) +1)-1.$$
Moreover, if~$\Gamma$  is a syzygy-finite Artin algebra and $M\otimes_{\Gamma}M=0$, then $T_{\Gamma}(M)$ is an Igusa-Todorov algebra, and so $\mathsf{fin. dim}(T_{\Gamma}(M))<\infty$.
		\item[$(2)$] If $M$ is left perfect, then
		$$\der \mathbf{D}^{b} (\modL)\leq\der \mathbf{D}^{b}( {\rm mod}\text{-}T_{\Gamma}(M))\leq n(\der \mathbf{D}^{b}(\modL) +1)-1.$$
	\end{enumerate}
\end{corollary}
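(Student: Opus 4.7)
The plan is to reduce this statement directly to Corollary \ref{IT:corollary} via the isomorphism $T_{\Gamma}(M)\cong \Gamma\ltimes_{\theta}M'$ with $M'=M\oplus M^{\oo 2}\oplus\cdots$ and $\theta$ induced by the concatenation maps $M^{\oo k}\oo_{\Gamma}M^{\oo l}\ra M^{\oo k+l}$, as recorded in the paragraph preceding the corollary. So the main task is to check that $M'$ satisfies the hypotheses of Corollary \ref{IT:corollary}.

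First I would observe that because $M^{\oo n}=0$, the bimodule $M'$ collapses to the finite direct sum $M\oplus M^{\oo 2}\oplus\cdots\oplus M^{\oo(n-1)}$, so $M'$ is a finitely generated $\Gamma$-bimodule and $T_{\Gamma}(M)\cong \Gamma\ltimes_{\theta}M'$ is a Noetherian algebra. A direct expansion of $M'^{\oo n}=\bigoplus M^{\oo (i_1+\cdots+i_n)}$ with each $i_j\geq 1$ shows every summand has tensor degree $\geq n$ and hence vanishes, so $M'^{\oo n}=0$. This produces the exponent $n$ in the claimed bounds.

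Next I would transfer the one-sided hypotheses from $M$ to $M'$. For part (1), if $_{\Gamma}M$ and $M_{\Gamma}$ are projective, then each tensor power $M^{\oo k}$ is projective on both sides (one side of $M$ projectivity is used to decompose one tensor factor as a summand of a free module, the other side to conclude projectivity of the resulting sum), and a finite direct sum of such powers gives that $_{\Gamma}M'$ and $M'_{\Gamma}$ are projective. For part (2), if $M$ is left perfect in the sense of Definition \ref{def:left perfect}, I would verify that $M'$ is also left perfect: finite projective dimension of each $M^{\oo k}$ follows by an iterative argument using that $\Tor^{\Gamma}_{i}(M,M^{\oo j})=0$ for $i,j\geq 1$ allows lifting a projective resolution of $M^{\oo (k-1)}$ through tensoring with $M$, and the required $\Tor$-vanishing for $M'$ against $M'^{\oo j}$ reduces via the direct sum decomposition to vanishing of $\Tor^{\Gamma}_{i}(M^{\oo a},M^{\oo b})$ for all $a,b\geq 1$, again handled by iteration from the base vanishing.

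Once $M'$ has been shown to satisfy the relevant projectivity or left-perfectness hypotheses together with $M'^{\oo n}=0$, the three inequalities follow immediately by applying Corollary \ref{IT:corollary} to the $\theta$-extension $\Gamma\ltimes_{\theta}M'\cong T_{\Gamma}(M)$ with the same integer $n$. The main obstacle I expect is part (2): the bookkeeping needed to propagate left-perfectness from $M$ to arbitrary tensor powers and their direct sum, since one must simultaneously control projective dimensions and $\Tor$-groups while ensuring the chosen projective resolutions remain well-behaved after tensoring on either side.
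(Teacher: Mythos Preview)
Your proposal is correct and follows exactly the same route as the paper: the paper's entire proof is the sentence ``Thus we have the following result by Corollary \ref{IT:corollary}'' together with the preceding remark identifying $T_{\Gamma}(M)\cong \Gamma\ltimes_{\theta}M'$, and you have simply filled in the verifications (nilpotence of $M'$, transfer of projectivity and left-perfectness from $M$ to $M'$) that the paper leaves implicit. Your anticipated bookkeeping in part (2) is real but routine, and the paper does not spell it out either.
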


\begin{corollary}
	Let $\Gamma$ be a Noetherian algebra and $M$ a $\Gamma$-bimodule with $M^{\oo n}=0$ for some positive integer $n$.
	\begin{enumerate}
		\item[$(1)$] If $_{\Gamma}M$ and $M_{\Gamma}$ are projective, then
		$$\IT (\Gamma)\leq\IT (\Gamma\ltimes M)\leq n(\IT (\Gamma) +1).$$
Moreover, if~$\Gamma$  is a syzygy-finite Artin algebra and $M\otimes_{\Gamma}M=0$, then $\Gamma\ltimes M$ is an Igusa-Todorov algebra, and so $\mathsf{fin. dim}(\Gamma\ltimes M)<\infty$.
		
		\item[$(2)$] If $M_{\Gamma}$ is projective, then $$\dim (\Gamma)\leq\dim (\Gamma\ltimes M) \leq n(\dim (\Gamma) +1)-1.$$
		
		\item[$(3)$] If $M$ is left perfect, then
		$$\der \mathbf{D}^{b} (\modL)\leq\der \mathbf{D}^{b}({\rm mod}\text{-}\Gamma\ltimes M)\leq n(\der \mathbf{D}^{b} (\modL) +1)-1.$$
	\end{enumerate}
\end{corollary}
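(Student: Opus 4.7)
The plan is to recognize the trivial extension $\Gamma\ltimes M$ as the special case of the $\theta$-extension $\Gamma\ltimes_{\theta}M$ obtained by taking $\theta=0\colon M\otimes_{\Gamma}M\to M$. Since $0$ is trivially associative, this is a legitimate instance of Example \ref{ex:extension}, and the multiplication on $\Gamma\oplus M$ reduces to $(\gamma,m)\cdot(\gamma',m')=(\gamma\gamma',\gamma m'+m\gamma')$, which is precisely the trivial extension product. Hence the entire cleft extension machinery for $\theta$-extensions, and in particular the diagram
$$\scalebox{0.85}{\xymatrixcolsep{3pc}\xymatrix{
		\modg\ar[rr]^{\mathbf{Z}} && {\rm mod}\text{-}\Gamma\ltimes M\ar[rr]^{\mathbf{U}}  && \modg \ar@/^2.0pc/[ll]^{\mathbf{H}} \ar@/_2.0pc/[ll]_{\mathbf{T}} } }$$
from Remark \ref{remark:fg} together with the identification of the induced endofunctors, applies verbatim.

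My second step is to check that the nilpotency hypothesis transfers correctly. The induced endofunctor $\mathsf{F}$ on $\modg$ is naturally isomorphic to $-\otimes_{\Gamma}M$ (and $\mathsf{F}'\simeq \Hom_{\Gamma}(M,-)$), as was already observed in the proof of Corollary \ref{IT:corollary}; so $M^{\otimes n}=0$ immediately gives $\mathsf{F}^{n}=0$ and $\mathsf{F}'^{n}=0$. The projectivity hypotheses $_{\Gamma}M$ and $M_{\Gamma}$ projective ensure that $\mathsf{l}=\mathbf{T}$ is exact and $\mathsf{e}=\mathbf{U}$ preserves projectives, which are the hypotheses of Theorem \ref{thm:IT}; similarly, projectivity of $M_{\Gamma}$ alone yields exactness of $\mathsf{r}=\mathbf{H}$ in $\modg$, matching the hypotheses of Theorem \ref{main thm}; and left perfectness of $M$ yields left perfectness of $-\otimes_{\Gamma}M$, matching Theorem \ref{thm:dimension of triangulated category}. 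Also, $\Gamma\ltimes M$ is plainly Noetherian as an extension of $\Gamma$ by the projective bimodule $M$, so the finitely-generated-module framework of Remark \ref{remark:fg} is in force.

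Having verified each hypothesis, the three inequalities (1)--(3) follow by direct invocation of the three parts of Corollary \ref{IT:corollary} applied to $\theta=0$. Concretely, (1) comes from Theorem \ref{thm:IT} applied to the cleft extension above, (2) from Theorem \ref{main thm} applied to the corresponding cleft coextension guaranteed by Lemma \ref{extension-coextension}, and (3) from Theorem \ref{thm:dimension of triangulated category}. The only genuine point to check, and the one I would flag as the main obstacle, is to be careful that all constructions (the bimodule $M$, the endofunctors $\mathsf{F}$ and $\mathsf{F}'$, the functors $\mathbf{T}$ and $\mathbf{H}$) are the same whether derived from the trivial extension directly or from the $\theta$-extension with $\theta=0$; this is essentially bookkeeping rather than a real difficulty, since the $\theta$ parameter never intervenes in the definition of these auxiliary functors or in the proofs of Theorems \ref{thm:IT}, \ref{main thm} and \ref{thm:dimension of triangulated category}. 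Thus the corollary is a direct specialization of Corollary \ref{IT:corollary}, and no new argument is required.
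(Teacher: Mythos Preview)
Your proposal is correct and follows essentially the same approach as the paper: recognize $\Gamma\ltimes M$ as the $\theta$-extension $\Gamma\ltimes_{\theta}M$ with $\theta=0$ and invoke Corollary~\ref{IT:corollary}. The paper's own proof is the single line ``Since $\Gamma\ltimes M\cong \Gamma\ltimes_{\theta}M$ with $\theta=0$, the result follows from Corollary~\ref{IT:corollary}''; you have simply spelled out the hypothesis-checking in more detail, which is fine.
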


\begin{proof}
	Since $\Gamma\ltimes M\cong \Gamma\ltimes_{\theta}M$ with $\theta=0$, the result follows from Corollary \ref{IT:corollary}.
\end{proof}

Recall from \cite{GP1} that the arrow removal operation on an admissible path algebra induces a cleft extension with certain homological properties.
For more details, we refer to \cite{GP1,EPS}.

\begin{theorem}\label{arrow removal}
	Let $\Lambda=kQ/I$ be an admissible quotient of a path algebra $kQ$ over a field $k.$
	Suppose that there are arrows $a_{i}:\upsilon_{e_{i}}\ra \upsilon_{f_{i}}$ in $Q$ for $i=1,2,\cdots, t$ which do not occur in a set of minimal generators of $I$ in $kQ$ and $\Hom_{\Lambda}(e_{i}\Lambda,f_{j}\Lambda)=0$ for all $i,j$ in $\{1,2,\cdots,t\}$.
	Let $\Gamma=\Lambda/\Lambda\{\overline{a_{i}}\}_{i=1}^{t}\Lambda$.
	Then the following hold:
	\begin{enumerate}
		\item[$(1)$] $\IT (\Gamma) \leq \IT(\Lambda) \leq 2\IT(\Gamma)+1.$
		\item[$(2)$] $\dim (\Gamma) \leq \dim(\Lambda) \leq 2\dim(\Gamma)+1.$
		\item[$(3)$] $\der \mathbf{D}^{b}(\modg)\leq\der\mathbf{D}^{b}(\modL)\leq 2\der \mathbf{D}^{b}(\modg)+1.$
	\end{enumerate}
	\begin{proof}
		By \cite[Proposition 4.6]{GP1}, there is a cleft extension
		$$\scalebox{0.85}{\xymatrixcolsep{3pc}\xymatrix{
				\modg\ar[rr]^{\mathsf{i}=\mathrm{Hom}_{\Gamma}(_{\Lambda}\Gamma_{\Gamma},-)} && \modL\ar[rr]^{\mathsf{e}=\mathrm{Hom}_{\Lambda}(_{\Gamma}\Lambda_{\Lambda},-)} \ar@/_2.0pc/[ll]_{\mathsf{q}=-\otimes_{\Lambda}\Gamma_{\Gamma}} \ar@/^2.0pc/[ll]^{\mathsf{p}=\mathrm{Hom}_{\Lambda}(_{\Gamma}\Gamma_{\Lambda},-)} && \modg. \ar@/_2.0pc/[ll]_{\mathsf{l}=-\otimes_{\Gamma}\Lambda_{\Lambda}} \ar@/^2.0pc/[ll]^{\mathsf{r}=\mathrm{Hom}_{\Gamma}(_{\Lambda}\Lambda_{\Gamma},-)} } }$$
		such that $\sf l$ and $\sf r$ are exact, $\sf e$ preserves projectives and $\mathsf{F}^{2}=0$, where $\mathsf{F}$ is the induced endofunctor of the above cleft extension.
		Since $\mathsf {el}\simeq \mathsf{F}\oplus \mathrm{Id}_{\modB}$ by \cite[Lemma 2.3]{GP1}, it follows that $\mathsf{F}$ is exact and preserves projectives. Hence $\mathsf{F}$ is left perfect in a trivial way.
		So this corollary follows from Theorems \ref{thm:IT}, \ref{main thm} and \ref{thm:dimension of triangulated category}.
	\end{proof}
\end{theorem}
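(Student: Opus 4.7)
The plan is to exhibit arrow removal as a cleft extension (and simultaneously a cleft coextension) of module categories, and then read off all three bounds by applying Theorems \ref{thm:IT}, \ref{main thm}, and \ref{thm:dimension of triangulated category} with $n=2$. Specifically, following \cite[Proposition 4.6]{GP1}, the quotient $\Lambda\to\Gamma$ obtained by killing the arrows $a_1,\ldots,a_t$ produces the six-functor package $\mathsf{i}=\Hom_{\Gamma}({}_\Lambda\Gamma_\Gamma,-)$, $\mathsf{e}=\Hom_{\Lambda}({}_\Gamma\Lambda_\Lambda,-)$, $\mathsf{l}=-\oo_\Gamma \Lambda_\Lambda$, $\mathsf{r}=\Hom_{\Gamma}({}_\Lambda\Lambda_\Gamma,-)$, $\mathsf{q}=-\oo_\Lambda\Gamma_\Gamma$, and $\mathsf{p}=\Hom_\Lambda({}_\Gamma\Gamma_\Lambda,-)$, realizing both $(\mathsf{l},\mathsf{e})$ and $(\mathsf{e},\mathsf{r})$ as adjoint pairs. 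This puts the situation squarely inside the framework developed in the paper.

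Next I would verify the hypothesis package required by the three main theorems. The combinatorial conditions on the arrows $a_i$ (namely, that they do not occur in a minimal set of generators of $I$) together with the vanishing $\Hom_{\Lambda}(e_i\Lambda,f_j\Lambda)=0$ are precisely what \cite{GP1} exploits to force $\mathsf{l}$ and $\mathsf{r}$ to be exact, $\mathsf{e}$ to preserve projectives, and the induced endofunctor $\mathsf{F}$ on $\modg$ to satisfy $\mathsf{F}^{2}=0$. Granting all of this, part (1) follows directly from Theorem \ref{thm:IT} with $n=2$, yielding $\IT(\Lambda)\leq 2\IT(\Gamma)+1$; part (2) follows from Theorem \ref{main thm} applied to the cleft coextension side with $n=2$, yielding $\dim \Lambda\leq 2\dim \Gamma+1$. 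The lower bounds $\IT(\Gamma)\leq \IT(\Lambda)$ and $\dim\Gamma\leq\dim\Lambda$ are the easy halves built into those same theorems.

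For part (3) I additionally need that $\mathsf{F}$ is left perfect. Here I would invoke the decomposition $\mathsf{e}\mathsf{l}\simeq \mathsf{F}\oplus \mathrm{Id}_{\modg}$ from \cite[Lemma 2.3]{GP1}: since $\mathsf{l}$ is exact and $\mathsf{e}$ preserves projectives, the composite $\mathsf{e}\mathsf{l}$ is exact and projective-preserving, and hence so is its direct summand $\mathsf{F}$. Because $\mathsf{F}$ is exact, all higher left derived functors of its iterates vanish identically, so both clauses of Definition \ref{def:left perfect} hold trivially. With left perfectness and $\mathsf{F}^2=0$ in hand, Theorem \ref{thm:dimension of triangulated category} delivers $\der\mathbf{D}^b(\modL)\leq 2\der\mathbf{D}^b(\modg)+1$, and the reverse inequality is again the easy direction.

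The main obstacle is not really mathematical but rather a matter of matching data: one must keep careful track of which functors realize the $(\mathsf{l},\mathsf{e})$-adjunction for the extension side and which realize the $(\mathsf{e},\mathsf{r})$-adjunction for the coextension side, and then confirm that the nontrivial combinatorial input—that $\mathsf{F}^{2}=0$ under the stated $\Hom$-vanishing hypothesis—is indeed what \cite[Proposition 4.6]{GP1} provides. Once this identification is secured, all three estimates are formal consequences of the general machinery established in Sections 3--5.
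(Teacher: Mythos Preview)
Your proposal is correct and matches the paper's proof essentially line for line: invoke \cite[Proposition 4.6]{GP1} to obtain the six-functor cleft (co)extension with $\mathsf{l},\mathsf{r}$ exact, $\mathsf{e}$ projective-preserving, and $\mathsf{F}^2=0$; then use the splitting $\mathsf{el}\simeq\mathsf{F}\oplus\mathrm{Id}$ from \cite[Lemma 2.3]{GP1} to conclude that $\mathsf{F}$ is exact and projective-preserving, hence trivially left perfect; and finally read off (1), (2), (3) from Theorems \ref{thm:IT}, \ref{main thm}, \ref{thm:dimension of triangulated category} with $n=2$. The only detail you leave implicit (as does the paper) is that $\mathsf{F}^2=0$ forces $\mathsf{F}'^2=0$ on the coextension side, which is needed for Theorem \ref{main thm}; this is covered by Lemma \ref{extension-coextension} together with the explicit description in \cite{GP1}.
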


\section{Examples}

\def\Up{\mathrm{U}}
\def\Down{\mathrm{D}}
\def\e{\varepsilon}
\def\modcat{\mathrm{mod}\text{-}}
\def\I{\mathrm{I}}
\def\II{\mathrm{II}}
\def\rad{\mathrm{rad}}
\def\soc{\mathrm{soc}}
\def\lineardim{\mathrm{dim}_{k}}

In this section, we provide two examples illustating Theorem \ref{arrow removal}. They can be seen as examples for Theorems \ref{thm:IT}, \ref{main thm} and \ref{thm:dimension of triangulated category}.

\begin{example}
Let $A = kQ/I$ be a finite dimensional algebra whose bound quiver $(Q,I)$ is given by Figure \ref{fig}.
 This is a gentle algebra. Gentle algebras are introduced by Assem and Skowro\'{n}ski in \cite{AS1987};
their module categories and derived categories have been studied by using
quiver methods \cite[etc]{BD2017, LZ2019, Z2016, Z2019, ZH2016} and
geometric models \cite[etc]{BCS2021, LZ2021, OPS2018, QZZ2022}.
\begin{figure}[htbp]
  \centering
\begin{tikzpicture}[scale=2]
\draw[rotate around={ 33:(0,0)}][line width=3pt][cyan!50][dashed] (0,0.75) arc(90:150:0.75);
\draw[rotate around={213:(0,0)}][line width=3pt][cyan!50][dashed] (0,0.75) arc(90:150:0.75);
\draw[rotate around={  0:(0,0)}][line width=3pt][cyan!50][dashed] (-1.3,0) arc(0:82:0.7);
\draw[rotate around={180:(0,0)}][line width=3pt][cyan!50][dashed] (-1.3,0) arc(0:82:0.7);
%
\foreach \x in {0,60,...,300}
\draw[->][rotate around = { 0- \x:(0,0)}] (-2,0) node[blue]{$\bullet$};
\foreach \x in {60,120,240,300}
\draw[->][rotate around = { 0- \x:(0,0)}] (-2,0) node[red]{$\bullet$};
\foreach \x in {0,60,120}
\draw[->][rotate around = {-5- \x:(0,0)}] (-2,0) arc ( 180: 130:2);
\foreach \x in {0,60,120}
\draw[->][rotate around = { 5+ \x:(0,0)}] (-2,0) arc (-180:-130:2);
\draw (-0.20, 0.00) node[blue]{$\bullet$};
\draw (-0.85, 1.50) node[ red]{$\bullet$};
\draw (-0.85,-1.50) node[ red]{$\bullet$};
\draw[->] (-0.40, 0.00) -- (-1.80, 0.00);
\draw[->] (-0.80, 1.40) -- (-0.20, 0.20);
\draw[<-] (-0.80,-1.40) -- (-0.20,-0.20);
\draw[->] (-0.73, 1.53) to[out=19,in=161] ( 0.73, 1.53);
\draw[rotate around = {180:(0,0)}] (-0.20, 0.00) node[blue]{$\bullet$};
\draw[rotate around = {180:(0,0)}] (-0.85, 1.50) node[ red]{$\bullet$};
\draw[rotate around = {180:(0,0)}] (-0.85,-1.50) node[ red]{$\bullet$};
\draw[<-][rotate around = {180:(0,0)}] (-0.40, 0.00) -- (-1.80, 0.00);
\draw[<-][rotate around = {180:(0,0)}] (-0.80, 1.40) -- (-0.20, 0.20);
\draw[->][rotate around = {180:(0,0)}] (-0.80,-1.40) -- (-0.20,-0.20);
\draw[<-][rotate around = {180:(0,0)}] (-0.73, 1.53) to[out=19,in=161] ( 0.73, 1.53);
\draw[->][line width=1pt][rotate around = {-5- 60:(0,0)}][red] (-2,0) arc ( 180: 130:2);
\draw[->][line width=1pt][rotate around = { 5+ 60:(0,0)}][red] (-2,0) arc (-180:-130:2);
\draw[->][line width=1pt][red] (-0.73, 1.53) to[out=19,in=161] ( 0.73, 1.53);
\draw[<-][line width=1pt][red][rotate around = {180:(0,0)}] (-0.73, 1.53) to[out=19,in=161] ( 0.73, 1.53);
\draw[rotate around = {   0:(0,0)}] (-2.3,0) node[blue]{$1$};
\draw[rotate around = { -60:(0,0)}] (-2.3,0) node[ red]{$2_{\Up}$};
\draw[rotate around = {-120:(0,0)}] (-2.3,0) node[ red]{$3_{\Up}$};
\draw[rotate around = {-180:(0,0)}] (-2.3,0) node[blue]{$4$};
\draw[rotate around = {-240:(0,0)}] (-2.3,0) node[ red]{$3_{\Down}$};
\draw[rotate around = {-300:(0,0)}] (-2.3,0) node[ red]{$2_{\Down}$};
\draw (-1.00, 1.25) node[ red]{$5_{\Up}$};
\draw ( 1.00, 1.25) node[ red]{$6_{\Up}$};
\draw (-1.00,-1.25) node[ red]{$5_{\Down}$};
\draw ( 1.00,-1.25) node[ red]{$6_{\Down}$};
\draw (-0.45, 0.23) node[blue]{$7$};
\draw ( 0.45,-0.23) node[blue]{$8$};
\draw[rotate around = {  60:(0,0)}] (0,2.2) node{$a_{\Up}$};
\draw[rotate around = {   0:(0,0)}] (0,2.2) node[red]{$b_{\Up}$};
\draw[rotate around = { -60:(0,0)}] (0,2.2) node{$c_{\Up}$};
\draw[rotate around = { 120:(0,0)}] (0,2.2) node{$a_{\Down}$};
\draw[rotate around = { 180:(0,0)}] (0,2.2) node[red]{$b_{\Down}$};
\draw[rotate around = { 240:(0,0)}] (0,2.2) node{$c_{\Down}$};
\draw( 0.00, 1.45) node[red]{$d_{\Up}$};
\draw( 0.00,-1.45) node[red]{$d_{\Up}$};
\draw(-0.65, 0.80) node{$\alpha_{\Up}$};
\draw(-0.65,-0.80) node{$\alpha_{\Down}$};
\draw(-1.00, 0.20) node{$\beta$};
\draw( 0.65, 0.80) node{$\gamma_{\Up}$};
\draw( 0.65,-0.80) node{$\gamma_{\Down}$};
\draw( 1.00, 0.20) node{$\delta$};
\end{tikzpicture}
  \caption{Bound quiver $(Q,I)$ with $I = \langle \alpha_{\Up}\beta, \beta a_{\Up}, c_{\Down}\delta, \delta \gamma_{\Down}  \rangle$}
  \label{fig}
\end{figure}
In this instance, take $e_1=2_{\Up}$, $e_2=5_{\Up}$, $e_3=2_{\Down}$, $e_4=5_{\Down}$,
$f_1=3_{\Up}$, $f_2=6_{\Up}$, $f_3=6_{\Down}$ and $f_4=6_{\Down}$.
Then the arrows $a_1:=b_{\Up}$, $a_2:=d_{\Up}$, $a_3:=b_{\Down}$, $a_4:=d_{\Down}$ do not occur in a set of minimal generators
of $I$, and one can check that
\[ \lineardim \Hom_A(\e_{e_i}A, \e_{f_j}A) = \lineardim (\e_{f_j}A\e_{e_i})
 = \text{the number of paths from $f_j$ to $e_i$} = 0 \]
for all $i,j\in\{1,2,3,4\}$, where $\e_{e_{i}}:=e_{i}+I.$
It follows that
\[ \Hom_A(\e_{e_i}A, \e_{f_j}A) = 0, \ \forall i,j\in\{1,2,3,4\}. \]
Then we have a cleft extension
$$
\scalebox{0.85}{\xymatrixcolsep{3pc}\xymatrix{
   \modcat\Gamma
   \ar[rr]^{\mathrm{Hom}_{\Gamma}(_{A}\Gamma_{\Gamma},-)}
&& \modcat A
   \ar[rr]^{\mathrm{Hom}_{A}(_{\Gamma}A_{A},-)}
   \ar@/_2.0pc/[ll]_{-\otimes_{A}\Gamma_{\Gamma}}
   \ar@/^2.0pc/[ll]^{\mathrm{Hom}_{A}(_{\Gamma}\Gamma_{A},-)}
&& \modcat\Gamma,
   \ar@/_2.0pc/[ll]_{-\otimes_{\Gamma}A_{A}}
   \ar@/^2.0pc/[ll]^{\mathrm{Hom}_{\Gamma}(_{A}A_{\Gamma},-)} } }$$
where $\Gamma = A/A\{b_{\Up}, d_{\Up}, b_{\Down}, d_{\Down}\}A$ is isomorphic to $kQ'/I'$,
and the bound quiver $(Q',I')$ of it is shown in Figure \ref{fig2}.
Here, $Q'$ is not connected and $I'=I$. Quiver $Q'$ has two connected components $Q'_{\I}$ and $Q'_{\II}$,
then $\Gamma = \Gamma_1 \times \Gamma_2$, where
$\Gamma_1 = kQ'_{\I}/\langle \alpha_{\Up}\beta, \beta a_{\Up} \rangle$ and
$\Gamma_2 = kQ'_{\II}/\langle  c_{\Down}\delta, \delta \gamma_{\Down}  \rangle$.
It follows that $\modcat \Gamma_1 \times \modcat \Gamma_2$.
%
\begin{figure}[htbp]
  \centering
\begin{tikzpicture}[scale=2]
\draw[rotate around={ 33:(0,0)}][line width=3pt][cyan!50][dashed] (0,0.75) arc(90:150:0.75);
\draw[rotate around={213:(0,0)}][line width=3pt][cyan!50][dashed] (0,0.75) arc(90:150:0.75);
\draw[rotate around={  0:(0,0)}][line width=3pt][cyan!50][dashed] (-1.3,0) arc(0:82:0.7);
\draw[rotate around={180:(0,0)}][line width=3pt][cyan!50][dashed] (-1.3,0) arc(0:82:0.7);
%
\foreach \x in {0,120,180,300}
\draw[->][rotate around = { 0- \x:(0,0)}] (-2,0) node[blue]{$\bullet$};
\foreach \x in {60,120,240,300}
\draw[->][rotate around = { 0- \x:(0,0)}] (-2,0) node[red]{$\bullet$};
\foreach \x in {0, 120}
\draw[->][rotate around = {-5- \x:(0,0)}] (-2,0) arc ( 180: 130:2);
\foreach \x in {0, 120}
\draw[->][rotate around = { 5+ \x:(0,0)}] (-2,0) arc (-180:-130:2);
\draw (-0.20, 0.00) node[blue]{$\bullet$};
\draw (-0.85, 1.50) node[ red]{$\bullet$};
\draw (-0.85,-1.50) node[ red]{$\bullet$};
\draw[->] (-0.40, 0.00) -- (-1.80, 0.00);
\draw[->] (-0.80, 1.40) -- (-0.20, 0.20);
\draw[<-] (-0.80,-1.40) -- (-0.20,-0.20);
\draw[rotate around = {180:(0,0)}] (-0.20, 0.00) node[blue]{$\bullet$};
\draw[rotate around = {180:(0,0)}] (-0.85, 1.50) node[ red]{$\bullet$};
\draw[rotate around = {180:(0,0)}] (-0.85,-1.50) node[ red]{$\bullet$};
\draw[<-][rotate around = {180:(0,0)}] (-0.40, 0.00) -- (-1.80, 0.00);
\draw[<-][rotate around = {180:(0,0)}] (-0.80, 1.40) -- (-0.20, 0.20);
\draw[->][rotate around = {180:(0,0)}] (-0.80,-1.40) -- (-0.20,-0.20);
%
\draw[rotate around = {   0:(0,0)}] (-2.3,0) node[blue]{$1$};
\draw[rotate around = { -60:(0,0)}] (-2.3,0) node[ red]{$2_{\Up}$};
\draw[rotate around = {-120:(0,0)}] (-2.3,0) node[ red]{$3_{\Up}$};
\draw[rotate around = {-180:(0,0)}] (-2.3,0) node[blue]{$4$};
\draw[rotate around = {-240:(0,0)}] (-2.3,0) node[ red]{$3_{\Down}$};
\draw[rotate around = {-300:(0,0)}] (-2.3,0) node[ red]{$2_{\Down}$};
\draw (-1.00, 1.25) node[ red]{$5_{\Up}$};
\draw ( 1.00, 1.25) node[ red]{$6_{\Up}$};
\draw (-1.00,-1.25) node[ red]{$5_{\Down}$};
\draw ( 1.00,-1.25) node[ red]{$6_{\Down}$};
\draw (-0.45, 0.23) node[blue]{$7$};
\draw ( 0.45,-0.23) node[blue]{$8$};
\draw[rotate around = {  60:(0,0)}] (0,2.2) node{$a_{\Up}$};
\draw[rotate around = { -60:(0,0)}] (0,2.2) node{$c_{\Up}$};
\draw[rotate around = { 120:(0,0)}] (0,2.2) node{$a_{\Down}$};
\draw[rotate around = { 240:(0,0)}] (0,2.2) node{$c_{\Down}$};
\draw(-0.65, 0.80) node{$\alpha_{\Up}$};
\draw(-0.65,-0.80) node{$\alpha_{\Down}$};
\draw(-1.00, 0.20) node{$\beta$};
\draw( 0.65, 0.80) node{$\gamma_{\Up}$};
\draw( 0.65,-0.80) node{$\gamma_{\Down}$};
\draw( 1.00, 0.20) node{$\delta$};
\draw (-2.6,1) node{$Q'_{\I}$} (2.6,-1) node{$Q'_{\II}$};
\end{tikzpicture}
  \caption{$\Gamma = A/A\{b_{\Up}, d_{\Up}, b_{\Down}, d_{\Down}\}A$}
  \label{fig2}
\end{figure}
Then we have:
\begin{itemize}
\item[(1)] $A$ is representation-infinite.

Recall that for each bound quiver of a gentle algebra,
we can define the {\bf formal inverse} of any arrow $a$ and write it as $a^{-1}$
and, naturally, define the sink/source of $a^{-1}$ is the source/sink of $a$.
A {\bf string} $s$ is a sequence $(s_1,s_2,\cdots, s_l)$ such that:
\begin{itemize}
  \item each $s_i$ ($1\leq i\leq l$) is either an arrow or a formal inverse of an arrow;
  \item if $s_{i}$ and $s_{i+1}$ are arrows, then $s_{i}s_{i+1}\ne 0$;
  \item if $s_{i}$ and $s_{i+1}$ are arrows' formal inverse, then $s_{i+1}^{-1}s_{i}^{-1}\ne 0$.
\end{itemize}
Furthermore, we say that a string $s$ is a {\bf band} if it satisfies the following conditions:
\begin{itemize}
  \item the sink of $s_l$ and the source of $s_1$ coincide;
  \item $s$ is not a non-trivial power of any string;
  \item $s^2$ is a string.
\end{itemize}
Then all indecomposable modules over a gentle algebra can be described by strings and bands;
see the works of Butler and in Ringel \cite[Section 3]{BR1987}.
For example, $s = a_{\Up}b_{\Up}c_{\Up}c_{\Down}^{-1}b_{\Down}^{-1}a_{\Down}^{-1}$ is a band,
it describes a family of indecomposable modules $B(s,n,\lambda)$ ($\lambda\ne 0$) whose quiver representations are of the form shown in Figure \ref{fig:band}.
One can check that
\begin{center}
  $B(s,n,\lambda) \cong B(s,n,\lambda')$ if and only if $\lambda = \lambda'$,
\end{center}
then $A$ is representation-infinite since all algebraically closed fields are always infinite fields.
\begin{figure}[htbp]
  \centering
\begin{tikzpicture}[scale=1.35]
\draw[rotate around={ 33:(0,0)}][line width=3pt][cyan!50][dashed] (0,0.75) arc(90:150:0.75);
\draw[rotate around={213:(0,0)}][line width=3pt][cyan!50][dashed] (0,0.75) arc(90:150:0.75);
\draw[rotate around={  0:(0,0)}][line width=3pt][cyan!50][dashed] (-1.3,0) arc(0:82:0.7);
\draw[rotate around={180:(0,0)}][line width=3pt][cyan!50][dashed] (-1.3,0) arc(0:82:0.7);
%
\foreach \x in {0,180}
\draw[rotate around = { 0- \x:(0,0)}] (-2,0) node[blue]{$k^{\oplus n}$};
\foreach \x in {60,120,240,300}
\draw[rotate around = { 0- \x:(0,0)}] (-2,0) node[red]{$k^{\oplus n}$};
\foreach \x in {0,60,120}
\draw[->][rotate around = {-5- \x:(0,0)}] (-2,0) arc ( 180: 130:2);
\foreach \x in {0,60,120}
\draw[->][rotate around = { 5+ \x:(0,0)}] (-2,0) arc (-180:-130:2);
\draw (-0.20, 0.00) node[blue]{$0$};
\draw (-0.85, 1.50) node[ red]{$0$};
\draw (-0.85,-1.50) node[ red]{$0$};
\draw[->] (-0.40, 0.00) -- (-1.80, 0.00);
\draw[->] (-0.80, 1.40) -- (-0.20, 0.20);
\draw[<-] (-0.80,-1.40) -- (-0.20,-0.20);
\draw[->] (-0.73, 1.53) to[out=19,in=161] ( 0.73, 1.53);
\draw[rotate around = {180:(0,0)}] (-0.20, 0.00) node[blue]{$0$};
\draw[rotate around = {180:(0,0)}] (-0.85, 1.50) node[ red]{$0$};
\draw[rotate around = {180:(0,0)}] (-0.85,-1.50) node[ red]{$0$};
\draw[<-][rotate around = {180:(0,0)}] (-0.40, 0.00) -- (-1.80, 0.00);
\draw[<-][rotate around = {180:(0,0)}] (-0.80, 1.40) -- (-0.20, 0.20);
\draw[->][rotate around = {180:(0,0)}] (-0.80,-1.40) -- (-0.20,-0.20);
\draw[<-][rotate around = {180:(0,0)}] (-0.73, 1.53) to[out=19,in=161] ( 0.73, 1.53);
\draw[->][line width=1pt][rotate around = {-5- 60:(0,0)}][red] (-2,0) arc ( 180: 130:2);
\draw[->][line width=1pt][rotate around = { 5+ 60:(0,0)}][red] (-2,0) arc (-180:-130:2);
\draw[->][line width=1pt][red] (-0.73, 1.53) to[out=19,in=161] ( 0.73, 1.53);
\draw[<-][line width=1pt][red][rotate around = {180:(0,0)}] (-0.73, 1.53) to[out=19,in=161] ( 0.73, 1.53);

\draw[rotate around = {  60:(0,0)}] (0,2.2) node{$\pmb{J}_n(\lambda)$};
\draw[rotate around = {   0:(0,0)}] (0,2.2) node[red]{$\pmb{E}$};
\draw[rotate around = { -60:(0,0)}] (0,2.2) node{$\pmb{E}$};
\draw[rotate around = { 120:(0,0)}] (0,2.2) node{$\pmb{E}$};
\draw[rotate around = { 180:(0,0)}] (0,2.2) node[red]{$\pmb{E}$};
\draw[rotate around = { 240:(0,0)}] (0,2.2) node{$\pmb{E}$};
\draw( 0.00, 1.45) node[red]{$0$};
\draw( 0.00,-1.45) node[red]{$0$};
\draw(-0.65, 0.80) node{$0$};
\draw(-0.65,-0.80) node{$0$};
\draw(-1.00, 0.20) node{$0$};
\draw( 0.65, 0.80) node{$0$};
\draw( 0.65,-0.80) node{$0$};
\draw( 1.00, 0.20) node{$0$};
\draw(-2.5,0) node[left]{$\pmb{J}_n(\lambda) =
\left(\begin{smallmatrix}
\lambda & 1 & &  \\
& \lambda & \ddots &  \\
& & \ddots& 1 \\
& & & \lambda  \\
\end{smallmatrix}
\right)_{n\times n}$};
\draw( 2.5,0) node[right]{$\pmb{E} =
\left(\begin{smallmatrix}
1 & & &  \\
& 1 & &  \\
& & \ddots& \\
& & & 1  \\
\end{smallmatrix}
\right)_{n\times n}$};
\end{tikzpicture}
  \caption{Band modules $B(s,n,\lambda)$}
  \label{fig:band}
\end{figure}

\item[(2)] $\Gamma$ is representation-finite.

The radicals of indecomposable projective $\Gamma_1$-modules $P(7)_{\Gamma_1}$ and $P(1)_{\Gamma_1}$ are decomposable. Precisely we have
\[ \rad P(7)_{\Gamma_1}
= \rad
  \left(
    \begin{smallmatrix}
     & 7 & \\ 1 & & 5_{\Down} \\ 2_{\Down} & &
    \end{smallmatrix}
  \right)_{\Gamma_1}
\cong \left({^{~1}_{2_{\Down}}}\right)_{\Gamma_1} \oplus (5_{\Down})_{\Gamma_1};
\]
\[
\rad P(1)_{\Gamma_1}
= \rad   \left(
    \begin{smallmatrix}
     & 1 & \\ 2_{\Up} & & 2_{\Down}
    \end{smallmatrix}
  \right)_{\Gamma_1}
\cong (2_{\Up})_{\Gamma_1} \oplus (2_{\Down})_{\Gamma_2}.
\]
Note that $\Gamma_1$ is a gentle algebra whose dimension (as $k$-vector space) is $13$,
and for each vertex $v$ of $Q_{\I}'$, there is at most one arrow with sink $v$,
then, by using \cite[Theorem 1.1 (1)]{WLL2024}, the number of indecomposable $\Gamma_1$-module is
\[ \lineardim \Gamma_1
 + \lineardim \left({^{~1}_{2_{\Down}}}\right)_{\Gamma_1} \cdot \lineardim (5_{\Down})_{\Gamma_1}
 + \lineardim (2_{\Up})_{\Gamma_1} \cdot (2_{\Down})_{\Gamma_2}
= 13 + 2 + 1 = 16.
\]
Dually, since $\Gamma_2$ is a gentle algebra with dimension $\lineardim \Gamma_2 = 13,$
and the quotient
\[ E(4)_{\Gamma_2}/\soc(E(4)_{\Gamma_2}) \cong (3_{\Up})_{\Gamma_2}\oplus (3_{\Down})_{\Gamma_2}\] and
\[ E(8)_{\Gamma_2}/\soc(E(4)_{\Gamma_2}) \cong \left({^{3_{\Up}}_{~4}}\right)_{\Gamma_2} \oplus (6_{\Up})_{\Gamma_2} \]
 are decomposable. By \cite[Theorem 1.1 (2)]{WLL2024}, the number of indecomposable $\Gamma_2$-modules is
\[ \lineardim \Gamma_2
 + \lineardim (3_{\Up})_{\Gamma_2} \cdot \lineardim (3_{\Down})_{\Gamma_2}
 + \lineardim \left({^{3_{\Up}}_{~4}}\right)_{\Gamma_2} \cdot (6_{\Up})_{\Gamma_2}
= 13 + 1 + 2 = 16. \]
Thus the number of indecomposable $\Gamma$-modules is $32$, and then $\Gamma$ is representation-finite.
Therefore, by Definitions \ref{def:IT distance} and \ref{extension-dimension},
we can see that $\IT (\Gamma) = 0 = \dim (\Gamma)$.

\item[(3)] $A$ is derived-infinite.

An algebra $\Lambda$ is called {\bf derived-finite} if the number of indecomposable objects in its bounded derived category $\mathbf{D}^b(\Lambda)$ is finite (up to isomorphisms and shifts).
Then an algebra $\Lambda$ is called {\bf derived-infinite} if it does not derived-finite.

Note that the bound quiver $(Q,I)$ of $A$ has a hereditary subquiver of Euclid type $\widetilde{\mathbb{A}}$
which is naturally induced by the band $s = a_{\Up}b_{\Up}c_{\Up}c_{\Down}^{-1}b_{\Down}^{-1}a_{\Down}^{-1}$,
and the number of clockwise arrows equals to that of anti-clockwise arrows,
then $A$ is derived-infinite (indeed, it is strongly derived-unbounded,
the definition of strongly derived-unbounded can be found in reference \cite[Definition 5]{ZH2016}).

\item[(4)] $\Gamma$ is derived-finite.

Note that $\Gamma_1$ and $\Gamma_2$ are gentle trees.
By the classification of gentle one-cycle algebra given in \cite[Section 7]{AG2008}
(or by using \cite[Theorem 4.6]{LZ2021}),
we obtain that $\Gamma_1$ and $\Gamma_2$ are derived equivalent to the hereditary Nakayama algebra of type $\mathbb{A}$.
It follows that $\Gamma_1$ and $\Gamma_2$ are derived-finite, and so is $A$.
Therefore, $\der \mathbf{D}^{b}(\modcat \Gamma) = 0.$
\end{itemize}

Now, by Theorem \ref{arrow removal}, we obtain
\[ 0 = \IT (\Gamma) \leq \IT(A)\leq 2\IT(\Gamma)+1 = 1;  \]
\[ 0 = \dim (\Gamma) \leq \dim(A) \leq 2\dim(\Gamma)+1 = 1 \]
and
\[ 0 = \der \mathbf{D}^{b}(\modg) \leq \der\mathbf{D}^{b}(\modcat A)\leq 2\der \mathbf{D}^{b}(\modg)+1 \le 1. \]
Note that $\dim(A) \geq 1$ since $A$ is representation-infinite; hence we have $\dim(A) = 1$.
Moreover, since $A$ is derived-infinite, we have $\der \mathbf{D}^{b}(\modg) > 0$. Then $\der\mathbf{D}^{b}(\modcat A)=1$.

\end{example}

Before presenting the second example, we require the following lemma.
\begin{lemma}\label{lem:dim>0}
	Let $A$ be a finite dimensional algebra over an algebraically closed field $k.$
	If the global dimension $\mathrm{gl.dim}A$ of $A$ is infinite, then $\der \mathbf{D}^{b}(\modA)\geq 1.$
\end{lemma}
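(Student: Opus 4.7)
The plan is to prove the contrapositive: if $\der \mathbf{D}^{b}(\modA) = 0$, then $\mathrm{gl.dim}(A) < \infty$. So I assume $\der \mathbf{D}^{b}(\modA) = 0$, which by Definition of $\der$ furnishes an object $T \in \mathbf{D}^{b}(\modA)$ with $\mathbf{D}^{b}(\modA) = \langle T \rangle_{1} = \mathrm{add}\{T[n] : n \in \mathbb{Z}\}$. Since $A$ is finite dimensional over the algebraically closed field $k$, the category $\mathbf{D}^{b}(\modA)$ is Krull--Schmidt, so I decompose $T = T_{1} \oplus \cdots \oplus T_{m}$ into indecomposables. Then every indecomposable object of $\mathbf{D}^{b}(\modA)$ is isomorphic to some $T_{i}[n]$. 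In particular, each indecomposable $A$-module $M$, viewed as a complex in degree $0$, must be isomorphic to a shift $T_{i}[n_{M}]$ whose cohomology is concentrated in a single degree. Hence $A$ admits at most $m$ non-isomorphic indecomposable modules and is of finite representation type.

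Now I suppose toward a contradiction that $\mathrm{gl.dim}(A) = \infty$. Then some simple module $S$ has infinite projective dimension. The syzygies $\Omega^{n}_{\modA}(S)$ form an infinite sequence of indecomposable $A$-modules, so by Step~1 and the pigeonhole principle there exist $a < b$ with $\Omega^{a}(S) \cong \Omega^{b}(S)$. The module $N := \Omega^{a}(S)$ is therefore $\Omega$-periodic and non-projective, which forces $\mathrm{Ext}^{k}_{A}(N,N) \ne 0$ for infinitely many $k$. Writing $N \cong T_{i}[c]$ and using $\mathrm{Ext}^{k}_{A}(N,N) = \mathrm{Hom}_{\mathbf{D}^{b}}(T_{i}, T_{i}[k])$, I obtain nonzero morphisms $f_{k} : T_{i} \to T_{i}[k]$ for an infinite set of shifts $k$.

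The key step is then to extract a contradiction from this infinite family. Each cone $C(f_{k})$ sits in a triangle $T_{i} \to T_{i}[k] \to C(f_{k}) \to T_{i}[1]$ and again lies in $\mathrm{add}\{T[n]\}$, so $C(f_{k}) \cong \bigoplus_{j} T_{l_{j}}[n_{j}]$; analyzing the long exact cohomology sequence shows that the cohomology of $C(f_{k})$ is supported in a growing range of degrees as $k \to \infty$, and careful bookkeeping forces infinitely many non-isomorphic indecomposable objects of $\mathbf{D}^{b}(\modA)$ up to shift, contradicting $m < \infty$. An alternative (and perhaps cleaner) route is to invoke Keller's theorem: since $T$ is a classical generator, $\mathbf{D}^{b}(\modA) \simeq \mathrm{per}(\mathrm{End}^{*}(T))$, and the condition $\mathbf{D}^{b}(\modA) = \mathrm{add}\{T[n]\}$ forces the graded endomorphism DG algebra $\mathrm{End}^{*}(T) = \bigoplus_{n}\mathrm{Hom}_{\mathbf{D}^{b}}(T,T[n])$ to be concentrated in degree zero and semisimple; derived equivalence between $A$ and this semisimple algebra then yields $\mathrm{gl.dim}(A) < \infty$ by the classical invariance result of Happel. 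The main obstacle in either route is establishing rigorously that the additive closure condition $\mathbf{D}^{b}(\modA) = \mathrm{add}\{T[n]\}$ is incompatible with the presence of unbounded $\mathrm{Hom}$-groups between shifts of the $T_{i}$, i.e.\ transforming an algebraic vanishing statement into a combinatorial one about indecomposables.
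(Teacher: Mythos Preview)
Your approach is genuinely different from the paper's. The paper does not argue directly: it quotes the classification theorem of Chen--Ye--Zhang (\emph{Algebras of derived dimension zero}) stating that $\der\mathbf{D}^{b}(\modA)=0$ forces $A$ to be an iterated tilted algebra of Dynkin type, then invokes Happel's derived equivalence $\mathbf{D}^{b}(\modA)\simeq\mathbf{D}^{b}(\sMod{kQ})$ and the derived invariance of finiteness of global dimension. That is a two-line proof by citation. Your route is more elementary and self-contained in spirit, but as written it has two real gaps.

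First, the claim that the syzygies $\Omega^{n}(S)$ are indecomposable is false in general, and without it your pigeonhole argument for $\Omega$-periodicity breaks down (multiplicities in $\Omega^{n}(S)$ could in principle grow). This is easily repaired: since $\mathrm{gl.dim}\,A=\infty$, some simple $S$ has $P_{n}\neq 0$ for all $n$ in its minimal projective resolution, and as there are only finitely many indecomposable projectives, some $P(S')$ occurs as a summand of $P_{n}$ for infinitely many $n$; hence $\mathrm{Ext}^{n}_{A}(S,S')\neq 0$ for infinitely many $n$, which is all you actually need.

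Second, and more seriously, your stated conclusion of Route~A is not what the cone argument gives. For $k\ge 2$ the cone $C(f_{k})$ of a nonzero $f_{k}\colon S\to S'[k]$ has cohomology exactly $S'$ in degree $-k$ and $S$ in degree $-1$. Once $k-1$ exceeds the maximal cohomological width of the $T_{j}$, any decomposition $C(f_{k})\in\mathrm{add}\{T[n]\}$ is forced to split as $S'[k]\oplus S[1]$; this does \emph{not} yield new indecomposables. What it does yield is that the map $g\colon S'[k]\to C(f_{k})$ in the triangle is a split monomorphism (its component into the summand $S'[k]$ is a cohomology isomorphism, and its component into $S[1]$ lies in $\mathrm{Ext}^{1-k}(S',S)=0$). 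Since $g\circ f_{k}=0$ in any triangle, a retraction of $g$ gives $f_{k}=0$, the desired contradiction. So Route~A can be completed, but the mechanism is ``the triangle splits'' rather than ``infinitely many indecomposables appear''. Route~B as you describe it is circular: showing that $\mathrm{End}^{*}(T)$ is concentrated in degree zero is precisely the vanishing $\mathrm{Hom}(T,T[n])=0$ for $n\neq 0$ that you are trying to establish.
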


\begin{proof}
	Assume that $\der \mathbf{D}^{b}(\modA)=0$.
	Then by \cite{CYZ} that $A$ is an iterated tilted algebra of Dykin type.
	It follows from \cite[Theorem 2.10]{Happel} that there exists an triangle equivalence $\mathbf{D}^{b}(\modA)\simeq\mathbf{D}^{b}({\rm mod}\text{-}kQ)$ for some Dykin quiver $Q.$
	Since $\mathrm{gl.dim}~kQ<\infty$, we have $\mathrm{gl.dim}A<\infty$ by \cite{Happel1} or \cite[Theorem 3.2]{Xi}. This leads to a contradiction.
	Hence $\der \mathbf{D}^{b}(\modA)\geq 1.$
\end{proof}

\begin{example}
This example is taken from \cite[Example 6.3]{GP1}.
Let $\Lambda$ be the $k$-algebra given by the following quiver
	\[\xymatrix{
		6\ar[rr]^g && 1\ar[dl]_a\ar[dr]^b & \\
		& 2\ar[dr]_c & & 3\ar[dl]^d\\
		5\ar@<1ex>[uu]^{f_1}\ar@<-1ex>[uu]_{f_2} & & 4\ar[ll]^e & }\]
	with relations $\{ ac - bd, cef_1, de, ef_1g, f_1gb, ga\}$.
	Then $\Lambda$ is a representation-infinite algebra and $\mathrm{gl.dim}\Lambda=\infty$ by \cite[Example 6.3]{GP1}
or \cite[Example 4.8]{QS}.
	Since $f_{2}$ is not occurring in any relations, $\Lambda$ can be reduced by arrow removal.
	Note that $\Lambda/\langle f_{2}\rangle$ is of finite representation type (see \cite[Example 6.3]{GP1}). By
Definition \ref{def:IT distance} and Definition \ref{extension-dimension}, we can see that $\IT (\Lambda/\langle
f_{2}\rangle)=0=\dim (\Lambda/\langle f_{2}\rangle)$. And we get $\der \mathbf{D}^{b}({\rm mod}\text{-}\Lambda/\langle
f_{2}\rangle)\leq 1$ by \cite[Theorem]{han2009derived}.
	By \cite[Corollary 5.8]{Be2}, we have $\mathrm{gl.dim}\Lambda/\langle f_{2}\rangle=\infty$. It follows from Lemma
\ref{lem:dim>0} that $\der \mathbf{D}^{b}({\rm mod}\text{-}\Lambda/\langle f_{2}\rangle)=1.$

	$(1)$ By Theorem \ref{arrow removal}(1), we know that $\IT (\Lambda)\leq 2\IT (\Lambda/\langle f_{2}\rangle)+1=1$,
that is, $\Lambda$ is an Igusa-Todorov algebra.
	
	$(2)$
	By Theorem \ref{arrow removal}(2), we have $\dim (\Lambda)\leq 2\dim(\Lambda/\langle f_{2}\rangle)+1=2\times 0+1=1$.
Since $\Lambda$ is a representation infinite algebra, we know from \cite{Be2} that $\dim (\Lambda)\geq1$. Thus $\dim
(\Lambda)=1.$
	
	$(3)$ By Theorem \ref{LL-and-gldim}, we get
	$\der\mathbf{D}^{b}(\modL) \leq \inf\{{\rm LL}(\Lambda)-1,{\rm gl.dim} \Lambda\}=\inf\{6-1,+\infty\}=5.$ 	It follows from	 Theorem \ref{arrow removal}(3) that $1\leq\der\mathbf{D}^{b}(\modL)\leq \der\mathbf{D}^{b}({\rm mod}\text{-}\Lambda/\langle f_{2}\rangle)\times 2+1=3 $. 	That is to say, compared to Theorem \ref{LL-and-gldim}, we can sometimes obtain a better upper bound. \end{example}

\section*{Acknowledgments}
\noindent
We thank the anonymous referee for their careful reading and numerous pertinent comments, which helped improve the exposition.

\bigskip


\noindent\textbf{Yajun Ma} \\
$^{1}$School of Mathematics and Physics, Lanzhou Jiaotong University, Lanzhou, 730070, Gansu Province, P. R. China. \\
$^{2}$Gansu Center for Fundamental Research in Complex Systems Analysis and Control, Lanzhou Jiaotong University, Lanzhou, 730070, Gansu Province, P. R. China. \\
E-mail: \textsf{yjma@mail.lzjtu.cn}


\noindent\textbf{Junling Zheng} \\
Department of Mathematics, China Jiliang University, Hangzhou, 310018, Zhejiang Province, P. R. China. \\
E-mail: \textsf{zhengjunling@cjlu.edu.cn}


\noindent\textbf{Yu-Zhe Liu} \\
School of Mathematics and statistics, Guizhou University, 550025 Guiyang, Guizhou, P. R. China. \\
E-mail: \textsf{liuyz@gzu.edu.cn / yzliu3@163.com}

\end{document}